\theoremstyle{plain}
\newtheorem{theorem}{Theorem}
\newtheorem{proposition}[subsection]{Proposition}
\newtheorem{lemma}[subsection]{Lemma}
\newtheorem{corollary}[subsection]{Corollary}
\newtheorem{sit}[subsection]{ }
\theoremstyle{definition}
\newtheorem{definition}[subsection]{Definition}
\newtheorem{example}[subsection]{Example}
\newtheorem{remark}[subsection]{Remark}
\newtheorem{nothing*}[subsection]{}
\newtheorem{notation}[subsection]{Notation}
\newtheorem{convention}[subsection]{Convention}
\newcommand{\rien}[1]{}
\newcommand{\diver}{ \operatorname{{\rm div}}}
\newcommand{\Lie}{ \operatorname{{\rm Lie}}}
\newcommand{\AVF}{ \operatorname{{\rm AVF}}}
\newcommand{\LieA}{ \operatorname{{\rm Lie}_{alg}}}
\newcommand{ \LieAO}{ \operatorname{{\rm Lie}_{alg}^\omega}}
\newcommand{\IVF}{ \operatorname{{\rm IVF}}}
\newcommand{\Aut}{ \operatorname{{\rm Aut}}}
\newcommand{\C}{\ensuremath{\mathbb{C}}}
\newcommand{\Z}{\ensuremath{\mathbb{Z}}}
\newcommand{\R}{\ensuremath{\mathbb{R}}}
\newcommand{\hX}{{\hat X}}
\newcommand{\hR}{{\hat R}}
\newcommand{\hS}{{\hat S}}
\newcommand{\sgoth}{{\ensuremath{\mathfrak{s}}}}
\newcommand{\lgoth}{{\ensuremath{\mathfrak{l}}}}
\newcommand{\mgoth}{{\ensuremath{\mathfrak{m}}}}
\newcommand{\ngoth}{{\ensuremath{\mathfrak{n}}}}
\newcommand{\cB}{{\ensuremath{\mathcal{B}}}}
\newcommand{\cV}{{\ensuremath{\mathcal{V}}}}
\newcommand{\cC}{{\ensuremath{\mathcal{C}}}}
\newcommand{\cN}{{\ensuremath{\mathcal{N}}}}
\newcommand{\cR}{{\ensuremath{\mathcal{R}}}}
\newcommand{\cZ}{{\ensuremath{\mathcal{Z}}}}
\newcommand{\Ker}{{\rm Ker} \,}
\renewcommand{\epsilon}{\varepsilon}
\renewcommand{\phi}{\varphi}
\begin{document}
\renewcommand{\baselinestretch}{1.07}
%%%%%%  TOPMATTER:   %%%%%%%%%%%%%%%%%%%%%%%%%

\title[On Algebraic Volume Density Property]
{On Algebraic Volume Density Property}
\author{Shulim Kaliman}
\address{Department of Mathematics\\
University of Miami\\
Coral Gables, FL 33124 \ \ USA}
%\thanks{Research supported by the NSA grant  H98230-06-1-0063.}
\email{kaliman@math.miami.edu}
\author{Frank Kutzschebauch}
\address{Mathematisches Institut \\Universit\"at Bern
 \\Sidlerstr. 5
 \\ CH-3012 Bern, Switzerland}
\thanks{{\bf Acknowledgements:} This research was started during a visit
of the first author to the University of Bern  and continued
during a visit of the second author to the University of Miami,
Coral Gables. We thank these institutions for their generous
support and excellent working conditions. The research of the
first author was also partially supported by NSA Grant no.
H982301010185 and the
second author was also partially supported by Schweizerische
Nationalfonds grants No. 200020-134876/1 and 200021-140235/1}
\email{Frank.Kutzschebauch@math.unibe.ch} \keywords{affine space, density property, volume density property, homogeneous spaces, flexible variety}
{\renewcommand{\thefootnote}{} \footnotetext{2000
\textit{Mathematics Subject Classification.} Primary: 32M05,14R20.
Secondary: 14R10, 32M25.}}
\begin{abstract} A smooth affine algebraic variety $X$ equipped with an algebraic volume form $\omega$ has the algebraic
volume density property (AVDP) if the
Lie algebra generated by complete algebraic vector fields of $\omega$-divergence zero
coincides with the space of all algebraic vector fields of $\omega$-divergence zero. We develop an effective criterion of
verifying whether a given $X$ has AVDP. As an application of this method we establish AVDP for any
homogeneous space $X=G/R$ that admits a $G$-invariant algebraic volume form where
$G$ is a linear algebraic group and $R$ is a closed reductive subgroup of $G$.  \end{abstract}
\maketitle \vfuzz=2pt

\vfuzz=2pt
%%%%%%%%%%%%%%%%%%%%%%%%%%%%%%%%%%%%%%%%%%%%%%%%%%%%%%%%%%%%%%%%%%%
%%%%%%%%%%%%%%%%%%%%%%%%%%%%%%%%%%%%%%%%%%%%%%%%%%%%%%%%%%%%%%%%%%%
%%%%%%%%%%%%%%%%%%%%%%%%%%%%%%%%%%%%%%%%%%%%%%%%%%%%%%%%%%%%%%%%%%%
\section{Introduction} In 1990's  Anders\'en and Lempert  \cite{A}, \cite{AL} discovered a remarkable property of complex Euclidean spaces
 of dimension at least 2  that to a great extend
compensates for the lack of partition of unity for holomorphic automorphisms. It is called the density property  (this terminology
was introduced later by Varolin \cite{V1}) or for short DP. A Stein manifold $X$ has DP if the Lie algebra generated by complete holomorphic
vector fields is dense (in the compact-open topology) in the space of all holomorphic vector fields on $X$. In the presence of DP one can
construct global holomorphic automorphisms of $X$ with prescribed local properties. More precisely, any local phase flow on a Runge domain
in $X$ can be approximated by global automorphisms. Needless to say that this lead to remarkable consequences (see \cite{FR},
\cite{Ro}, \cite{V1}, \cite{V2}, \cite{KaKu3}).

If $X$ is equipped with a holomorphic volume form $\omega$ (i.e. $\omega$ is a nowhere vanishing top holomorphic
differential form) then one can ask whether a similar approximation holds for automorphisms  and phase flows preserving $\omega$.
Under a mild additional assumption the answer is yes in the presence of the volume density property (VDP) which means that the Lie algebra
generated by complete homomorphic vector fields of $\omega$-divergence zero is dense in the space of all
holomorphic vector fields of $\omega$-divergence zero.

  The original method of Anders\'en and Lempert was developed further by Varolin and Toth \cite{TV1}, \cite{TV2}
who establish DP for a wide class of examples. Complex manifolds with VDP were harder to find
though Anders\'en proved VDP for $\C^n$ in \cite{A}  even before DP  for $\C^n$ was established in his paper with
Lempert \cite{AL}.

To deal with this difficulty we use technique of affine algebraic geometry and study the case when $X$ is a smooth affine algebraic variety  (over $\C$) 
while  whenever a volume form $\omega$ is present
it is an algebraic one. The following definitions are due to Varolin and the authors.

\begin{definition}\label{nc.08.20.10}
We say that $X$ has the algebraic  density property (ADP) if the Lie algebra $\LieA (X)$ generated by the set $\IVF (X)$ of complete
algebraic vector fields coincides with the space $\AVF (X)$ of all algebraic vector fields on $X$. Similarly in the presence of $\omega$
we can speak about the algebraic volume density property (AVDP) that means the equality $\LieAO (X)=\AVF_\omega (X)$ for analogous
objects (that is, all participating vector fields have $\omega$-divergence zero; say $\LieAO (X)$ is generated by $\IVF_\omega (X)$).
\end{definition}

It is worth mentioning that ADP and AVDP imply DP and VDP respectively (where the second implication is not that obvious) and
in particular all remarkable consequences for complex analysis on $X$.

An effective criterion whether $X$ has ADP was developed by the
authors in \cite{KaKu1}.  The main idea was to search for a nontrivial $\C [X]$-module inside $\LieA (X)$  using so-called pairs
of compatible vector fields. It lead in particular to the
proof of ADP for almost all homogeneous spaces of form $G/R$ where $G$ is any linear algebraic group and $R$ is a
closed reductive subgroup of $G$ \cite{DDK}
(one needs to have $R$ reductive for $G/R$ to be affine).

At first glance this idea does not work in the volume-preserving case. Indeed,
$\LieAO (X)$ cannot contain a $\C [X]$-module by the following reason. If $\nu \in \AVF_\omega (X)$ then for $f \in \C [X]$ the divergence
of the vector field $f \nu$ is computed by formula $\diver_\omega (f\nu ) = \nu (f)$, i.e. it is nonzero for a general $f$.

However  in this paper we establish a criterion  for the volume-preserving case whose effectiveness is
comparable with the one in \cite{KaKu1}  for ADP.
 Surprisingly  we do need to catch a  nontrivial  $\C [X]$-module but in a space different from $\LieAO (X)$.
To describe this space we need some extra notation.  Let $\cC_{k} (X)$ be the space of
algebraic differential $k$-forms on $X$ and $\cZ_{k}(X)$ and $\cB_{k} (X)$ be its subspaces of
closed and exact $k$-forms respectively. If $\dim X =n$ then there exists an isomorphism $\Theta : \AVF_\omega (X) \to \cZ_{n-1}(X)$
given by the formula $\xi \to  \iota_\xi \omega$ where $ \iota_\xi \omega$ is the interior product of $\omega$ and $\xi \in \AVF_\omega (X)$.
Consider the homomorphism $D_k : \cC_{k-1} (X) \to \cB_{k} (X)$ generated
by outer differentiation ${\rm d}$ and let $D=D_{n-1}$. The main theme of our new criterion is the search for a $\C [X]$-module in the space
$D^{-1} \circ \Theta ( \LieAO (X))$. With some additional assumptions the existence of such a module implies AVDP.  
%The situation is in fact even more pleasant because the methods we developed earlier in \cite{KaKu1} and \cite{DDK}
%for checking whether $\LieA (X)$ contains a $\C [X]$-module work perfectly for the space $D^{-1} \circ \Theta ( \LieAO (X))$.
  It is worth mentioning that the search of nontrivial $\C [X]$-modules in $D^{-1} \circ \Theta ( \LieAO (X))$ is based on the notion of semi-compatible 
pairs of vector fields
which is more transparent and accessible than the notion of compatible pairs used before.

The absence of such a simple approach was the main source of difficulties in \cite{KaKu4} where,  in particular, we proved AVDP for
all linear algebraic groups with respect to left (or right) invariant volume forms.  Now we are able not only to demonstrate a drastic simplification of the proof of this result
but to establish AVDP for every homogeneous space $G/R$ as before provided this space is equipped with a $G$-invariant volume form (actually,
we prove a stronger statement, see Theorem \ref{nc.11.23.20}, Remark \ref{nc.08.13.20}, and Corollary \ref{nc.12.15.20} below).

The paper is organized as follows. In section \ref{prel} we remind the definition and properties of semi-compatible vector fields which were first introduced in our previous work
\cite{KaKu1}. In section \ref{crit} we present the criterion that relates AVDP with such fields. In the next two sections we check whether some assumptions of this criterion are
valid for homogeneous spaces. Section 6 contains the main result about AVDP of a homogeneous space equipped with an invariant volume form.
As another application of our method in section \ref{surface}  we establish AVDP for an interesting class of surfaces that appeared in the solution to the  Gromov-Vaserstein problem \cite{IK}.
The appendix contains some rather straightforward material on invariant volume forms of homogeneous spaces.

 {\em Acknowledgements.} We are grateful to F. Forstneric and J. Globevnik for useful discussions and advice that lead to a better
presentation of our results.

\section{Preliminaries}\label{prel}

The main aim of this section is to remind the definition  of semi-compatible vector fields and
their properties . This notion
will be our main tool in the search of $\C[X]$-modules in the space $D^{-1} \circ \Theta ( \LieAO (X))$ discussed in the Introduction.

\begin{notation}\label{nc.07.13.10} %In what follows %(unless it is stated otherwise)
 For the rest of the paper $X$ is always a  {\bf smooth affine irreducible algebraic
variety over $\C$}   with an exception of Remark \ref{nc.01.13.10}  (where $X$ is normal) and all other notations mentioned in Introduction remain valid.
We consider often a situation when $X$ is equipped also with  an  {\bf effective}  algebraic action of a group $\Gamma$. We call such an $X$ a  $\Gamma$-variety
and for the rest of the paper $\Gamma$ is always  {\bf finite} .
The ring of regular $\Gamma$-invariant functions will be denoted by $\C [X, \Gamma ]$; it is naturally isomorphic to the ring
$\C [X/\Gamma ]$  of regular functions on the quotient space  $X/\Gamma$. \end{notation}

\begin{definition}\label{nc.08.02.30} 
(1) Recall that a holomorphic vector field $\xi$ on $X$ is called complete\footnote{In our previous papers
we called such fields completely (or globally) integrable.}
if there is a holomorphic $\C_+$-action $\Phi : \C \times X \to X$
such that $\xi (f)= {\frac{\rm d} {{\rm d} t}} f \circ \Phi (t, * )|_{t=0}$ for every $f \in \C [X]$.
This action $\Phi$ is called the phase flow of $\xi$. When $\Phi$ is an algebraic $\C_+$-action
the field $\xi$ is called locally nilpotent, and  when $\Phi$ factor through $\C^* \times X$ and generate
an algebraic $\C^*$-action then $\xi$ is called semi-simple.

(2)  Let $\xi$  and $\eta$ be
nontrivial complete algebraic vector fields on a    $\Gamma$-variety $X$ which are $\Gamma$-invariant.
We say that the pair $(\xi , \eta )$ is $\Gamma$-semi-compatible if\\
\begin{center} {\em the span of
$ (\Ker \xi \cap \C (X, \Gamma ))\cdot (\Ker \eta \cap \C (X, \Gamma )) $  contains a nonzero ideal of  $\C [X, \Gamma ]$.}\\[2ex]
\end{center}
 The largest ideal contained in the span will be called the associate $\Gamma$-ideal of the pair $(\xi , \eta )$.
In the case of a trivial $\Gamma$ we say that the pair $(\xi , \eta)$ is semi-compatible. In this terminology $\Gamma$-semi-compatibility
of $(\xi , \eta )$  is equivalent to  semi-compatibility of $(\xi' , \eta')$ where $\xi'$ and $\eta'$ are the vector fields on $X/\Gamma$
induced  by $\xi$ and $\eta$.

\end{definition}

\begin{example}\label{nc.11.16.00} (1)  Let $X_i$ ($i=1,2$) be an affine algebraic $\Gamma_i$-variety
and $p_i :  X:=X_1 \times X_2 \to X_i$ be the natural projection. Let $\Gamma_i$ and $\Gamma_1 \times \Gamma_2$ act naturally on $X$.
Suppose that   $ \xi_i$ is $\Gamma_i$-invariant complete algebraic
vector field on $X$ such that
$(p_j)_* ( \xi_i)=0$ for $1 \leq i \ne j \leq 2$. Since $(p_i)^*(\C [X_i, \Gamma_i ])\subset (p_i)^*(\C [X_i ]) \subset \Ker \xi_j$ we see that
$(\xi_1, \xi_2)$ is a $(\Gamma_1 \times \Gamma_2)$-semi-compatible pair on $X$ whose associate 
$\Gamma_1 \times \Gamma_2$-ideal is $\C [X,\Gamma_1 \times \Gamma_2 ]$.
 This argument does not work when one considers the question  of $\Gamma$-semi-compatibility of  $(\xi_1, \xi_2)$
for a finite subgroup $\Gamma$ of $\Gamma_1 \times \Gamma_2$. Nevertheless  if
the assumption of Proposition \ref{nc.08.13.10} below holds  
then $\xi_1$ and $\xi_2$ are $\Gamma$-semi-compatible.

(2) Consider $X=SL_2$ as a subvariety of $\C^4_{a_1,a_2,b_1,b_2}$  given by $a_1b_2-a_2b_1=1$ (where by $\C^n_{z_1, \ldots , z_n}$ we denote
a Euclidean space $\C^n$ with a fixed coordinate system $(z_1, \ldots ,z_n)$).
The vector fields
$$\xi = b_1\partial /\partial a_1   + b_2\partial /\partial a_2 \, \, \,{\rm and} \, \, \,  \eta = a_1\partial /\partial b_1 + a_2\partial /\partial b_2 $$
are locally nilpotent on $X$ and if we present every $A \in X$ as a matrix
\begin{center}
$A =  \left[
\begin{array}{rr}
a_1& a_2  \\
b_1 & b_2 \\
\end{array}  \right]$ \end{center}
then $\xi$ (resp. $\eta$ ) is associated with left multiplication by elements of the upper (resp. lower) triangular unipotent $\C_+$-subgroup $U$ (resp. $L$) of $SL_2$.
Note that $\C [b_1, b_2] \subset \Ker \xi$ and $\C [a_1, a_2] \subset \Ker \eta$ which implies that the pair $(\xi , \eta )$ is semi-compatible
 and the associate ideal is again $\C [X]$.
If $I$ is the identity matrix and $\Gamma$ is the subgroup $\{ I , -I \}$ of $SL_2$ then $\xi$ and $\eta$ induce
locally nilpotent vector fields $\xi'$ and $\eta'$ on $Y=X/\Gamma \simeq PSL_2$.   The fact  that the pair $(\xi', \eta')$ is semi-compatible is
less trivial but true by the  following.
\end{example}

\begin{proposition}\label{nc.08.13.10}\cite[Lemma 3.6]{KaKu1}
Let $X$ be a  $\Gamma$-variety, $\xi_1$ and $\xi_2$
be semi-compatible  $\Gamma$-invariant vector fields on $X$. Suppose that $\xi_1'$ and $\xi_2'$ are the induced vector fields on $X'=X/\Gamma$
and one of the following conditions holds.

{\rm (i)} $\xi_1$ and $\xi_2$ are locally nilpotent and they generate a Lie algebra isomorphic to
$\sgoth \lgoth_2$ that induces a non-degenerate algebraic $SL_2$-action on $X$ (i.e. general $SL_2$-orbits
are of dimension 3).

{\rm (ii)} $[\xi_1 , \xi_2 ]=0$ and $\xi_1$ is a locally nilpotent   vector field with
a finitely generated kernel   while $\xi_2$ is
either locally nilpotent (also with a finitely generated kernel) or semi-simple.

Then the pair $(\xi' , \eta' )$ is semi-compatible, i.e. $(\xi , \eta)$ is $\Gamma$-semi-compatible.

\end{proposition}

\begin{remark}\label{nc.02.20.13.2} The kernel of every semi-simple vector field is finitely generated,
i.e. the algebraic quotient of $X$ with respect to a $\C^*$-action is affine because $\C^*$ is reductive.
The kernel of a locally nilpotent vector field may not be finitely generated
by Nagata's counterexample  to the 14-th Hilbert problem. The assumption that $\Ker \xi_i$ is finitely generated
is missed in \cite[Lemma 3.4 and Lemma 3.6]{KaKu1} but it is essential for the proof.  However it does not
impact the results of \cite{KaKu1} since in case (i) the kernels of locally nilpotent derivations are automatically
finitely generated by the Hadziev theorem \cite{Had}. Similarly the assumption (ii) was used in situations when
locally nilpotent derivations associated with linear actions on Euclidean spaces, i.e. their kernels are again 
finitely generated  by the Maurer-Weitzenbšck's theorem \cite{Mau}, \cite{Wei}.
%The same is true for vector fields
%associated with linear $\C_+$-actions on Euclidean
%spaces \cite{Wei}.
The  locally nilpotent vector fields we are dealing  with  in this paper also have this property. Therefore we assume the following.
\end{remark}

\begin{convention}\label{nc.02.20.13.1}
For the rest of the paper every locally nilpotent vector field has a finitely generated kernel.

\end{convention}

The fact that locally nilpotent vector fields $\xi$ and $\eta$ generate a Lie algebra isomorphic to $\sgoth \lgoth_2$ does not imply
in general that they are semi-compatible. However we have the following.

\begin{proposition}[\cite{DDK}{[Theorem 12]}] \label{nc.11.16.10} Let $X$ admit a fixed point free non-degenerate $SL_2$-action
%(i.e. general $SL_2$-orbits are of dimension 3)
and let $U$ (resp. $L$) be the unipotent subgroup of upper (resp. lower) triangular matrices in $SL_2$ as in Example \ref{nc.11.16.00}  (2).
Suppose that $\xi$ and $\eta$ are the locally nilpotent
vector  fields   associated with the induced actions of the $\C_+$-groups $U$ and $L$ on $X$. Then $(\xi , \eta )$ is a semi-compatible pair.

\end{proposition}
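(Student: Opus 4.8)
The statement to prove: if $X$ admits a fixed-point-free non-degenerate $SL_2$-action, and $\xi,\eta$ are the locally nilpotent vector fields coming from the upper/lower triangular unipotent subgroups $U,L$, then $(\xi,\eta)$ is semi-compatible, i.e. $\Span\big((\Ker\xi)\cdot(\Ker\eta)\big)$ contains a nonzero ideal of $\C[X]$.

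Let me sketch how I would approach this.

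---

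The plan is to reduce the general case to a model situation by slicing the $SL_2$-action, then identify the kernels of $\xi$ and $\eta$ explicitly on the slice.

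\smallskip

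First I would set up the geometry. The $SL_2$-action is fixed-point free and non-degenerate, so a general orbit is a 3-dimensional homogeneous space $SL_2/H$ with $H$ finite (or, after passing to the locus where stabilizers are trivial, $H$ trivial). I would pick a generic point $x_0\in X$, form the orbit $O=SL_2\cdot x_0\cong SL_2/H$, and look for a local (or rational) slice: a subvariety $S$ through $x_0$ transverse to the orbits, so that the action map $SL_2\times S\to X$ is dominant and generically finite, identifying a Zariski-dense open $X^\circ$ of $X$ with (an étale quotient of) $SL_2\times S$. The vector fields $\xi,\eta$ then become, on $SL_2\times S$, the vector fields $\xi_{SL_2}\times 0$ and $\eta_{SL_2}\times 0$ coming purely from the $SL_2$-factor. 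This is exactly the situation of Example \ref{nc.11.16.00}(1): a product, with each field living on one factor and killing the other. There the associate ideal is the whole coordinate ring. The point is that on $SL_2$ itself the pair of unipotent fields is semi-compatible with associate ideal $\C[SL_2]$ (this is Example \ref{nc.11.16.00}(2), realized via $\C[b_1,b_2]\subset\Ker\xi$, $\C[a_1,a_2]\subset\Ker\eta$, whose product spans $\C[SL_2]$ because the four coordinates $a_1,a_2,b_1,b_2$ and their products generate everything).

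\smallskip

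Second, I would transfer this back to $X$. The subtlety is that $SL_2\times S\to X$ is only generically finite/dominant, not an isomorphism, so functions in $\Ker\xi$ on $X$ pull back into $\Ker(\xi_{SL_2}\times 0)=(p_S)^*\C[S]\otimes(\text{stuff killed by }\xi_{SL_2})$ — but not every function downstairs need be a pullback. The standard fix: there is a nonzero $h\in\C[X]$ (clearing denominators of the rational slice, and accounting for the ramification divisor of the generically-finite map) such that $h\cdot\C[X]$ embeds compatibly, and such that the required products of kernel elements, multiplied by a suitable power of $h$, land in $\Span((\Ker\xi)\cdot(\Ker\eta))$. More precisely: pick regular functions $u_1,u_2\in\Ker\xi$ and $v_1,v_2\in\Ker\eta$ on $X$ restricting (up to bounded denominators) to $a_1,a_2$ resp.\ $b_1,b_2$ on a general orbit — these exist because $\Ker\xi$ is the ring of $U$-invariants and on each orbit $SL_2/H$ the $U$-invariants surject onto a big enough subring. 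Then $u_iv_j$ and $1$ together span, on each general orbit, the whole coordinate ring of that orbit; globalizing, $\Span((\Ker\xi)\cdot(\Ker\eta))$ contains all functions of $X$ after multiplication by some fixed nonzero $h$, hence contains the nonzero ideal $h^N\C[X]$ for suitable $N$.

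\smallskip

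The main obstacle I expect is precisely this globalization step: producing the regular functions $u_i\in\Ker\xi$, $v_j\in\Ker\eta$ on all of $X$ (not just rationally) whose pairwise products generate a full ideal, and controlling the common denominator $h$ uniformly over the orbit space. This requires knowing that $\Ker\xi=\C[X]^U$ is large enough — which is where non-degeneracy (orbits are genuinely 3-dimensional, so $U$ acts with 1-dimensional general orbits and $\C[X]^U$ has the right transcendence degree) and fixed-point-freeness are used — together with a Rosenlicht-type argument that $U$- and $L$-invariants separate points generically on each $SL_2$-orbit. I'd also need the elementary algebraic fact, already implicit in Example \ref{nc.11.16.00}(2), that on $SL_2=\{a_1b_2-a_2b_1=1\}$ the subring generated by $\{1,a_ib_j\}$ is all of $\C[SL_2]$; combined with the $H$-finite quotient (handled as in Proposition \ref{nc.08.13.10}), this closes the argument.
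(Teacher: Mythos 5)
There is a genuine gap, and it sits exactly where you flagged "the main obstacle": the passage from generic information to a nonzero ideal. Note first that this paper does not prove the proposition at all — it quotes it from \cite{DDK}[Theorem 12] — and the argument there runs through the criterion recalled in this paper's proof of Proposition \ref{nc.12.15.10}: by \cite{KaKu1}[Proposition 3.4], semi-compatibility of $(\xi,\eta)$ is \emph{equivalent} to the morphism $\rho=(\rho_1,\rho_2)\colon X\to X/\!/U\times X/\!/L$ being finite and birational onto a closed subvariety. Your slice construction only establishes the birational half: the span $R=\Span\big((\Ker\xi)\cdot(\Ker\eta)\big)=\rho^*\C[(X/\!/U)\times(X/\!/L)]$ has the same fraction field as $\C[X]$, i.e.\ $R[1/h]=\C[X][1/h]$ for some $h\neq 0$. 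That does not give "$R\supset h^N\C[X]$ for suitable $N$": from $R[1/h]=\C[X][1/h]$ you get $h^{N_f}f\in R$ with $N_f$ depending on $f$ and unbounded (from $h^Nf_i\in R$ for generators you can only conclude $h^{2N}f_if_j\in R$, not $h^Nf_if_j\in R$). Containing a nonzero ideal is a conductor statement, equivalent to $\C[X]$ being a finite $R$-module, i.e.\ to properness of $\rho$ — compare $R=\C[x,xy]\subset\C[x,y]$, which is birational, equal after inverting $x$, yet contains no nonzero ideal. Properness of $\rho$ is a global condition controlled by the special two-dimensional orbits (stabilizers $\C^*$ or its normalizer), which a generic slice never sees; this is the actual content of the proof in \cite{DDK}, carried out by analyzing neighborhoods of the degenerate orbits. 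Tellingly, your argument makes no essential use of fixed-point-freeness and would apply verbatim to an action with fixed points, where the conclusion is false (Remark \ref{nc.12.01.10}, citing \cite{Do}); so the missing global step is not a technicality but the heart of the matter.

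Two smaller inaccuracies in the same direction: the finitely many products $u_iv_j$ (together with $1$) span only a finite-dimensional space, not the coordinate ring of an orbit — what is true on $SL_2$ is that the \emph{span of all products} $f(a)g(b)$ with $f\in\C[a_1,a_2]$, $g\in\C[b_1,b_2]$ is $\C[SL_2]$; and the subring generated by $\{1,a_ib_j\}$ is not $\C[SL_2]$ but the two-dimensional invariant ring of the left torus action (the quotient $T\backslash SL_2$), so the closing "elementary algebraic fact" as stated is false. If you want to complete the proof along honest lines, the route is to verify closedness of $\rho(X)$ and finiteness of $\rho$ directly, handling the non-generic $SL_2$-orbits, rather than clearing denominators over a dense open set.
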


\begin{remark}\label{nc.12.01.10} The assumption that the action is fixed point free and non-degenerate is essential
for the validity of Proposition \ref{nc.11.16.10}.
In the presence of fixed points  the statement does not hold (see \cite{Do})  and for the degenerate case
$SL_2/\C^*$ provides a counterexample because of the following.

\end{remark}

\begin{proposition}\label{nc.12.15.10} Let $X$ be a smooth affine surface different from $\C^2$, $\C^* \times \C^*$, or $\C^* \times \C$. Then
$X$ does not admit a semi-compatible pair of algebraic vector fields that are locally nilpotent or semi-simple.

\end{proposition}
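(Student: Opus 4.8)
The plan is to argue by contradiction. Suppose $X$ carries a semi-compatible pair $(\xi,\eta)$ in which each field is locally nilpotent or semi-simple, and set $A=\C[X]$, $A_\xi=\Ker\xi$, $A_\eta=\Ker\eta$. The starting point is that, $\xi$ and $\eta$ being nontrivial and $\dim X=2$, each of $A_\xi$ and $A_\eta$ is a finitely generated, normal, one-dimensional $\C$-subalgebra of $A$: when $\xi$ is locally nilpotent, $A_\xi=A^{\mathbb{G}_a}$ is the coordinate ring of the affine quotient curve of the surface $X$ and is normal because it is algebraically closed in the normal ring $A$; when $\xi$ is semi-simple, $A_\xi=A^{\mathbb{G}_m}$ is finitely generated and normal by the classical invariant theory of the reductive group $\mathbb{G}_m$. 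Hence $C_\xi:=\Spec A_\xi$ and $C_\eta:=\Spec A_\eta$ are smooth affine curves, the inclusions give quotient morphisms $\pi_\xi\colon X\to C_\xi$ and $\pi_\eta\colon X\to C_\eta$, and I would study the product morphism $\phi:=(\pi_\xi,\pi_\eta)\colon X\to Y:=C_\xi\times C_\eta$. Its comorphism $\C[Y]=A_\xi\otimes_\C A_\eta\to A$ has image the subring $B\subseteq A$ generated by $A_\xi\cup A_\eta$, which is exactly $\Span\bigl((\Ker\xi)(\Ker\eta)\bigr)$.

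The crux is that semi-compatibility yields \emph{two} properties of $\phi$. Let $I\subseteq B$ be the nonzero associate ideal of $(\xi,\eta)$. First, since $I\subseteq B\subseteq A$ and $I$ is a nonzero ideal of $A$, we get $\operatorname{Frac}(B)=\operatorname{Frac}(A)$, so $B$ has transcendence degree $2$; as $\Spec B$ is then a two-dimensional integral closed subscheme of the two-dimensional irreducible variety $Y$, it must equal $Y$, whence $B=\C[Y]$ and $\phi$ is birational. Second, picking $0\neq a\in I$ we have $aA\subseteq I\subseteq B$, so $A$ sits inside the free rank-one $B$-module $a^{-1}B$; since $B=\C[Y]$ is Noetherian, $A$ is a finite $B$-module, i.e.\ $\phi$ is a finite morphism. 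Therefore $A$ is integral over $B$ and contained in $\operatorname{Frac}(B)$, while $B=\C[Y]$ is integrally closed because the surface $Y$ is smooth; hence $A=B$ and $\phi\colon X\xrightarrow{\ \sim\ }C_\xi\times C_\eta$ is an isomorphism.

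It then remains to recognize the two factors. Identifying $X$ with $C_\xi\times C_\eta$, the functions in $A_\eta=\C[C_\eta]$ are $\eta$-invariant, so the projection $\pi_\eta\colon X\to C_\eta$ is invariant under the group $G_\eta\in\{\mathbb{G}_a,\mathbb{G}_m\}$ generated by $\eta$; consequently each fibre $C_\xi\times\{c\}\cong C_\xi$ is $G_\eta$-stable, and for general $c$ it carries a nontrivial $G_\eta$-action — otherwise $\eta$, being tangent to these fibres, would vanish on a dense subset and hence everywhere. Thus $C_\xi$ is a smooth affine curve admitting a nontrivial $\mathbb{G}_a$- or $\mathbb{G}_m$-action, and such a curve is $\C$ (if it admits a nontrivial $\mathbb{G}_a$-action) or $\C$ or $\C^*$ (if it admits a nontrivial $\mathbb{G}_m$-action). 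By symmetry $C_\eta\in\{\C,\C^*\}$ as well, so $X\isom C_\xi\times C_\eta$ is one of $\C^2$, $\C\times\C^*$, $\C^*\times\C^*$, contradicting the hypothesis that $X$ is none of these.

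The one genuinely substantive move is the second paragraph: recognizing that the associate ideal simultaneously forces $\phi$ to be birational \emph{and} finite. Once both are in hand, normality of the target collapses $\phi$ to an isomorphism with no further effort, and the rest is bookkeeping together with two standard inputs — finite generation and normality of $\Ker\xi$ for a locally nilpotent $\xi$ on a smooth affine surface, and the classification of smooth affine curves admitting a nontrivial $\mathbb{G}_a$- or $\mathbb{G}_m$-action. I would carry the locally nilpotent and semi-simple cases in parallel from the outset, since in both the relevant kernel is a normal, finitely generated, one-dimensional subalgebra of $A$ and the quotient morphism enters the argument in exactly the same way.
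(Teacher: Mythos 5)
Your argument is correct, and its skeleton matches the paper's: both proofs study the morphism $(\rho_1,\rho_2)\colon X\to C_\xi\times C_\eta$ to the product of the two quotient curves and finish by upgrading a finite birational map onto a smooth (normal) target to an isomorphism. The difference lies in how the two key inputs are obtained. The paper simply cites Proposition 3.4 of \cite{KaKu1}, which says semi-compatibility is equivalent to the image being closed and the map finite and birational; you instead re-derive the direction you need from the definition, and your observation that a nonzero element $a$ of the associate ideal gives $aA\subseteq B$, hence simultaneously $\operatorname{Frac}(B)=\operatorname{Frac}(A)$ and finiteness of $A$ over $B$, is exactly the right self-contained substitute, with normality of a product of smooth curves replacing the paper's appeal to the Zariski Main Theorem. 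You also identify the factors in the opposite order: the paper first excludes quotient curves other than $\C$ and $\C^*$ (a point, or any other curve, admits no nonconstant morphism from $\C$ or $\C^*$, so general orbits of one group would lie in fibres of the other quotient and birationality would fail), whereas you first prove $X\isom C_\xi\times C_\eta$ and then read off that each factor inherits a nontrivial $\C_+$- or $\C^*$-action, hence is $\C$ or $\C^*$. Both routes work; the paper's is shorter because of the citation, yours is self-contained.

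One small inaccuracy should be repaired: it is not true that nontriviality of a \emph{semi-simple} field on a surface forces its kernel to be one-dimensional --- for a scalar-type $\C^*$-action the ring of invariant regular functions can be just $\C$ (this is why the paper's proof explicitly allows the quotient $X_1$ to be a point). This does not harm your proof: your own computation $\operatorname{Frac}(B)=\operatorname{Frac}(A)$ gives $B$ transcendence degree $2$, which forces both kernels to have transcendence degree exactly $1$; so the one-dimensionality should be stated as a consequence of semi-compatibility (or the point case handled separately), not of nontriviality and $\dim X=2$ alone.
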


\begin{proof}
Assume that $\xi_1$ and $\xi_2$ are such vector fields on $X$. Let $H_i$ be the $\C_+$ or $\C^*$
group associated with $\xi_i$ acting on $X$ and let $\rho_i : X \to X_i :=X//H_i$ be the quotient morphism.
If $X_1$ is a point or if it is a curve different from $\C$ and $\C^*$ then $X_1$ does not admit nonconstant homomorphisms from $\C$ or $\C^*$
and therefore any general orbit of $H_2$ must be contained in a fiber of $\rho_1$. Hence morphism
$\rho =(\rho_1, \rho_2): X \to X_1 \times X_2$ is not birational contrary to the fact that semi-compatibility is
equivalent to the claim that $Z=\rho (X)$ is closed in $X_1\times X_2$ and $\rho$ is finite birational
by Proposition 3.4 in \cite{KaKu1}. Thus $Z=X_1\times X_2$ is one of the surfaces listed in the statement of Proposition
and the Zariski Main theorem implies that $X$ is isomorphic $Z$.
\end{proof}

The existence of fixed point free non-degenerate $SL_2$ actions on homogeneous spaces is guaranteed by the next result.

\begin{proposition}\cite[Theorem 24]{DDK}\label{nc.11.16.20} Let $G$ be a semi-simple group and $R$ be a proper closed reductive subgroup
of $G$ such that $X=G/R$ is at least three-dimensional. Then there exists an $SL_2$ subgroup of $G$
whose natural action on $X$  is fixed point free and non-degenerate.
In particular, $X$ admits semi-compatible pairs of locally nilpotent vector fields.

\end{proposition}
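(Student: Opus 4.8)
The plan is to produce inside $G$ a suitable $SL_2$ subgroup and then invoke Propositions \ref{nc.11.16.10} and \ref{nc.08.13.10}. First I would reduce to the case where $G$ is semi-simple. If $G$ is an arbitrary semi-simple group we are already done, so the real content is the construction of an $SL_2$-subgroup of $G$ whose orbits on $X=G/R$ are three-dimensional and which has no fixed point on $X$. I would argue as follows: pick a regular nilpotent element (or at least a nilpotent element not contained in any conjugate of the Lie algebra of $R$) inside $\Lie G$; by the Jacobson--Morozov theorem it embeds into an $\sgoth\lgoth_2$-triple, hence integrates to an $SL_2$ (or $PSL_2$, which still contains the unipotent one-parameter subgroups $U$ and $L$) subgroup $H\subset G$. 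The point of choosing the nilpotent element generically relative to $\Lie R$ is exactly to force the action of $H$ on $G/R$ to be non-degenerate and fixed-point free: a fixed point $gR$ would mean $H\subset gRg^{-1}$, i.e. $\Lie H\subset \Lie(gRg^{-1})$, which contradicts the genericity since $\Lie R$ and all its conjugates, being reductive, cannot contain our chosen nilpotent; non-degeneracy (general orbits of dimension $3$) follows because the stabilizer of a general point is the intersection $H\cap gRg^{-1}$, which contains no nontrivial unipotent and no torus that would cut the orbit dimension below $3$.

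Once such an $H\cong SL_2$ (or $PSL_2$) is in hand, let $U$ and $L$ be the upper- and lower-triangular unipotent subgroups as in Example \ref{nc.11.16.00}(2), and let $\xi$ and $\eta$ be the locally nilpotent algebraic vector fields on $X$ induced by the $\C_+$-actions of $U$ and $L$. By Proposition \ref{nc.11.16.10}, applicable precisely because the $SL_2$-action on $X$ is fixed point free and non-degenerate, the pair $(\xi,\eta)$ is semi-compatible. In the $PSL_2$ case one first works on the $SL_2$-level where $\xi$, $\eta$ are semi-compatible, notes that $\Gamma=\{I,-I\}$ acts, and applies Proposition \ref{nc.08.13.10}(ii) — the Lie algebra generated by $\xi$ and $\eta$ is $\sgoth\lgoth_2$ — to descend semi-compatibility to the quotient $PSL_2$-action, hence to $X=G/R$ itself. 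This yields the desired semi-compatible pair of locally nilpotent vector fields.

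The step I expect to be the main obstacle — and the one genuinely doing the work — is the construction of the nilpotent element generic relative to all conjugates of $\Lie R$, together with the verification that this genericity forces both the absence of fixed points and the non-degeneracy of the orbits. This is a dimension count / general position argument in $\Lie G$: one must ensure that the subset of $\Lie G$ consisting of elements that lie in some $G$-conjugate of $\Lie R$ is a proper constructible subset, so that a regular nilpotent (which exists since $G$ is semi-simple and $\dim X\geq 3$ rules out the small-rank degeneracies) avoids it, and then translate ``avoids all conjugates of $\Lie R$'' into the geometric statements about the $SL_2$-action on $G/R$. Since this is exactly the content attributed to \cite{DDK}{[Theorem 24]}, I would cite it rather than reprove it; with that input the remainder of the argument is a short assembly of the propositions already available in the excerpt.
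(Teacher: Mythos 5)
The paper offers no proof of this proposition at all: it is imported verbatim from \cite{DDK}[Theorem 24], with the final clause being immediate from Proposition \ref{nc.11.16.10} applied to the unipotent subgroups $U$ and $L$ of the given $SL_2$, which is exactly how your proposal resolves it, so you are taking the same route as the paper. One caution: your heuristic sketch of the underlying DDK argument would not survive as written — a regular nilpotent can perfectly well lie in a conjugate of $\Lie R$ (e.g.\ when $R$ contains a principal $SL_2$), the set of elements lying in some conjugate of $\Lie R$ can even be dense in $\Lie G$ (e.g.\ when $R$ contains a maximal torus), so the general-position count must be carried out inside the nilpotent cone, and the non-degeneracy claim (no torus in the generic stabilizer) is asserted rather than proved — but since you ultimately defer to the citation, as the paper does, these gaps do not affect the proposal.
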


Besides the properties of  semi-compatible pair listed above we need also the following. 
%consequence of the
%Nakayama lemma.

\begin{lemma}\label{nc.12.27.10} Let $A$ be an affine domain (over $\C$) and $M$ be a finitely generated $A$-module.
%equipped
%with a filtration $M=\bigcup_{i \in \Nat } M_i, \, M_i  \subseteq  M_{i+1}$ such that each $M_i$ is a finitely generated module
%over $A$.
Let $N$ be a submodule of $M$.
 Suppose that for every  maximal ideal $\mu$ of $A$ one has
%and each $i\in \Nat$ one has

%{\rm (i)} $\mu M \cap M_i = \mu M_i$;

%{\rm (ii)}
$$M/\mu M = N /( \mu M \cap N).$$

Then $M=N$. \end{lemma}

\begin{proof}
%It suffices to show that $M_i =N_i$ for every $i\in \Nat$ where $N_i=N\cap M_i$, i.e. we can suppose that $M$ itself is finitely generated.
The Nullstellensatz implies that $M=N$ iff the localized versions of this equality with respect to every $\mu$ are valid.
Thus it is enough to consider the case when $A$ is a local ring with the only maximal ideal $\mu$.
%Furthermore, for $N_i=N\cap M_i$ assumptions (i) and (ii) imply that $M_i/\mu M_i = N_i / (\mu M_i \cap N_i)$ for every $i \in \Nat$, i.e. it suffices to
%prove the statement in the case of a finitely generated $M$.
Then for  the $A$-module $L= M/N$ the equality  $M/\mu M = N / (\mu M \cap N)$ implies that $L = \mu L$, i.e. by the Nakayama lemma $L$ is the zero module
which is the desired conclusion.
\end{proof}

We are going to apply Lemma  \ref{nc.12.27.10} is the case of $X$ equipped with two semi-simple vector fields $\xi$ and $\eta$
such that they commute. That is, this pair of fields generate a $\C^*\times \C^*$-action on $X$ and
a structure of a $\Z^2$-graded algebra $A =\oplus_{\alpha \in \Z^2} A_\alpha$ on $A=\C [X]$ where $\Z^2$ is the group of
characters for $\C^*\times \C^*$.  Note that for $\alpha = (m,n)$
a function $f$ belongs to $A_\alpha$ iff $\xi (f)=mf$ and $\eta (f)=nf$,  i.e., the grading is preserved by the vector fields
and $\Ker \xi =\oplus_{n \in \Z} A_{(0,n)}$ and $\Ker \eta =\oplus_{m \in \Z} A_{(m,0)}$. Furthermore, $g \in \Ker \xi$ iff $g_\alpha \in \Ker \xi$
for every $\alpha \in \Z^2$ where $g= \sum_{\alpha \in \Z^2} g_\alpha$ and $g_\alpha \in A_\alpha$ (similarly for $\eta$).

\begin{lemma}\label{nc.04.03.13.2}  Let $X, \xi, \eta$, and  $A =\oplus_{\alpha \in \Z^2} A_\alpha$ be  as above 
and let  $\xi$ and $\eta$ be tangent to a closed subvariety $Y$ of $X$. That is,
one has a grading  $B=\oplus_{\alpha \in \Z^2} B_\alpha$ on $B= \C [Y]$. Then for every $b \in B_\alpha$
there exists $a\in A_\alpha$ such that $a|_Y=b$. In particular for every $b \in \Ker \xi |_Y$ (resp. $b \in \Ker \eta |_Y$)
there is $a\in \Ker \xi$ (resp. $a  \in \Ker \eta$) such that $a|_Y =b$. 
\end{lemma}

\begin{proof}
Every regular function $b \in B_\alpha$ has a regular extension $f$ to the ambient algebraic variety $X$. Taking
the $\alpha$-homogeneous component of $f$ we get the desired $a$. 
\end{proof}

\begin{proposition}\label{nc.04.05.13.3} Let $\xi$ and $\eta$ be two semi-simple vector fields on $X$
such that they commute, i.e. they induced a $\C^*\times \C^*$-action on $X$. Suppose that
the algebraic quotient morphism $\pi : X \to W :=X//(\C^* \times \C^*)$ is smooth and all orbits of the action are two-dimensional.
Then the pair $(\xi , \eta )$ is semi-compatible.
\end{proposition}

\begin{proof} Since $\C^*\times \C^*$
is reductive this implies that for any $w \in W$ the fiber $\pi^{-1}(w)$ contains a unique closed orbit $Y$
and $\pi^{-1} (w) \setminus Y$ is the union of nonclosed orbits each of which contains $Y$ in its closure (e.g., see \cite{Kraft} or \cite{MuFo}).
By assumption each orbit is two-dimensional and it cannot contain another orbit in its closure, i.e. $ \pi^{-1}(w)=Y \simeq\C^* \times \C^*$.
Consider $A =\oplus_{\alpha \in \Z^2} A_\alpha$
and $B=\oplus_{\alpha \in \Z^2} B_\alpha$ as in Lemma \ref{nc.04.03.13.2}. Note that $A_{(0,0)}=\C [W]$,
i.e. $A$ and every $A_\alpha$ are $\C [W]$-modules. Furthermore, since $\pi$ is smooth, $A/\mu (w)$ 
is a domain where $\mu (w)$
is the maximal ideal in $\C [W]$ associated with the point $w$. That is, $B=A/\mu (w)$  and by Lemma \ref{nc.04.03.13.2}
$B_\alpha=A_\alpha/\mu (w)$.  Let $A'$ (resp. $A'_\alpha$) be the $\C [W]$-submodule of $A$ (resp. $A_\alpha$)
generated by ${\rm Span} \Ker \xi \cdot \Ker \eta$ (resp. $\sum_{\beta + \gamma =\alpha} {\rm Span} (\Ker \xi \cap A_\beta )\cdot (\Ker \eta \cap A_\gamma )$).
Note that $A'=\oplus_{\alpha \in \Z^2} A_\alpha'$ and consider $B_\alpha' = A_\alpha' /\mu (w)$.
The restrictions $\xi_w$ and $\eta_w$ of $\xi$ and $\eta$ to $Y$ are semi-compatible, i.e.  ${\rm Span} \Ker \xi_w \cdot \Ker \eta_w =B=\oplus_{\alpha \in \Z^2} B_\alpha$ 
and the second statement of Lemma \ref{nc.04.03.13.2} implies that  $B_\alpha'=\sum_{\beta + \gamma =\alpha}
{\rm Span} (\Ker \xi_w \cap B_\beta )\cdot (\Ker \eta_w \cap B_\gamma )$.
That is, $A_\alpha/\mu (w)=B_\alpha=B_\alpha' = A_\alpha' /\mu (w)$ for every $w \in W$. Since each vector space $B_\alpha$ is at most one-dimensional,
$A_\alpha$ is a finitely generated $\C [W]$-module. By Propositions \ref{nc.12.27.10} $A_\alpha =A_\alpha'$ and thus $A=A'$
which is the desired conclusion. 
\end{proof}

\section{The criterion}\label{crit}

To demonstrate the strength of our method we start this section with a simplified version of our criterion which yields AVDP   for semi-simple Lie groups in Example
\ref{ss}
(this fact was established in \cite{KaKu4} but our present approach is strikingly simpler than the argument used in that paper).

Recall that
$$\Theta : \AVF_\omega (X) \to \cZ_{n-1}(X)$$ is the isomorphism
given by the formula $\xi \to  \iota_\xi \omega$ (where $ \iota_\xi \omega$ is the interior product of $\omega$ and $\xi \in \AVF_\omega (X)$)
and  $D_k : \cC_{k-1} (X) \to \cB_{k} (X)$ is the homomorphism generated
by outer differentiation ${\rm d}$. The next simple observation provides a crucial connection between semi-compatibility and existence of $\C [X]$-modules
in $D^{-1} \circ \Theta (\LieAO (X))$ where $D=D_{n-1}$.

\begin{proposition}\label{bracket} Let $\xi$ and $\eta$ be vector fields from   $\AVF_\omega (X )$ . \footnote{Actually the statement
remains valid when $X$ is a complex manifold equipped with a holomorphic volume
form $\omega$ and $\xi$ and $\eta$ are two holomorphic vector fields on $X$ of $\omega$-divergence zero.}
Then

$$\iota_{[\xi, \eta]} \omega   =  {\rm d} \iota_{\xi} \iota_{\eta} \omega . \leqno{(1)}$$

\end{proposition}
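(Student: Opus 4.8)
The plan is to derive this from Cartan's magic formula relating the Lie derivative, the interior product, and the exterior derivative. Recall that for any vector field $\zeta$ and any differential form $\alpha$ one has the Cartan formula $\mathcal{L}_\zeta \alpha = {\rm d}\iota_\zeta \alpha + \iota_\zeta {\rm d}\alpha$, together with the identity $\iota_{[\xi,\eta]} = \mathcal{L}_\xi \iota_\eta - \iota_\eta \mathcal{L}_\xi$ (the Lie derivative acts on the algebra of forms-with-contractions as a derivation). I would begin by applying this last identity to the form $\omega$, so that $\iota_{[\xi,\eta]}\omega = \mathcal{L}_\xi(\iota_\eta \omega) - \iota_\eta(\mathcal{L}_\xi \omega)$.

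Next I would expand each of the two terms using Cartan's formula. For the second term, since $\xi$ has $\omega$-divergence zero, $\mathcal{L}_\xi \omega = (\diver_\omega \xi)\,\omega = 0$, so that term drops out entirely. For the first term, write $\mathcal{L}_\xi(\iota_\eta\omega) = {\rm d}\iota_\xi\iota_\eta\omega + \iota_\xi {\rm d}\iota_\eta\omega$. Now ${\rm d}\iota_\eta\omega = \mathcal{L}_\eta\omega - \iota_\eta{\rm d}\omega = \mathcal{L}_\eta\omega = 0$, again because $\eta$ is divergence-free and because $\omega$ is a top-degree form so ${\rm d}\omega = 0$ automatically. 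Hence $\mathcal{L}_\xi(\iota_\eta\omega) = {\rm d}\iota_\xi\iota_\eta\omega$, and combining with the vanishing of the second term gives $\iota_{[\xi,\eta]}\omega = {\rm d}\iota_\xi\iota_\eta\omega$, which is exactly formula $(1)$.

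The computation is essentially formal once the right identities are invoked, so there is no serious obstacle; the one point deserving care is the bookkeeping of signs and the justification that the purely algebraic identity $\iota_{[\xi,\eta]} = [\mathcal{L}_\xi,\iota_\eta]$ and Cartan's formula hold in the algebraic (as opposed to smooth) category — but this is standard, since all these operators are defined by the same universal formulas on the module of Kähler differentials $\cC_\bullet(X)$. An alternative, more hands-on route would be to pick local étale coordinates in which $\omega = {\rm d}z_1\wedge\cdots\wedge{\rm d}z_n$, write $\xi = \sum f_i\,\partial/\partial z_i$ and $\eta = \sum g_j\,\partial/\partial z_j$ with $\sum \partial f_i/\partial z_i = \sum \partial g_j/\partial z_j = 0$, and verify both sides of $(1)$ directly as $(n-1)$-forms; the divergence-free conditions are precisely what make the extra terms cancel. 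I would present the coordinate-free argument as the main proof and perhaps remark that the footnote's holomorphic version is identical, since only the formal properties of ${\rm d}$, $\iota$, and $\mathcal{L}$ are used.
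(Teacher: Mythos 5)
Your proposal is correct and follows essentially the same route as the paper: both rest on Cartan's formula $L_\xi = {\rm d}\iota_\xi + \iota_\xi{\rm d}$ together with the identity $\iota_{[\xi,\eta]} = [L_\xi,\iota_\eta]$, killing the extra terms via $L_\xi\omega = L_\eta\omega = 0$ (divergence zero) and ${\rm d}\omega = 0$ (top degree). No substantive difference beyond the order in which the two vanishing terms are dispatched.
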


\begin{proof}
Recall the following relations between the outer differentiation $\rm d$, Lie derivative $L_\xi$ and
the interior product $\iota_\xi$ \cite{KoNo}
$$ L_\xi = {\rm d}  \iota_\xi + \iota_\xi  {\rm d} \, \, \, {\rm and} \, \, \, [L_\xi , \iota_\eta]
=\iota_{[\xi , \eta ]}. \leqno{(2)}$$
By this formula
$$\iota_\xi {\rm d} \iota_\eta  \omega = \iota_\xi (L_\eta - \iota_\eta {\rm d})\omega $$
where the right-hand side is zero since $L_\eta \omega  - \iota_\eta {\rm d}\omega =0$
for closed $\omega$ and $\eta$ of $\omega$-divergence zero. Then another application of
formula (2) in combination with the fact that $\iota_\xi {\rm d} \iota_\eta \omega =0$ yields
$$[L_\xi , \iota_\eta]\omega = L_\xi \iota_\eta \omega  - \iota_\eta L_\xi \omega = L_\xi \iota_\eta \omega =
{\rm d} \iota_{\xi} \iota_{\eta} \omega + \iota_\xi {\rm d} \iota_\eta \omega  =
{\rm d} \iota_{\xi} \iota_{\eta} \omega .$$ Thus by formula (2) we have the desired
equality $$\iota_{[\xi, \eta]} \omega   =  {\rm d} \iota_{\xi} \iota_{\eta} \omega .$$

\end{proof}

Let  $\xi, \eta \in \IVF_\omega (X )$,  $f \in \Ker \xi$, and $g \in \Ker \eta$.  Replacing
$\xi$ and $\eta$ in Formula (1) by complete fields  $f \xi$ and $g \eta$ respectively we can see that $(fg)  \iota_{\xi}\iota_{\eta} \omega \in D^{-1} \circ \Theta (\LieAO (X))$.
Hence one has the following.

\begin{corollary}\label{nc.07.13.20basic} Let  $X$ be a  variety equipped with an algebraic volume form $\omega$ and let
$\xi$ and $\eta$ be semi-compatible
divergence-free vector fields on $X$.
Then $D^{-1} \circ \Theta (\LieAO (X ))$ contains
a nontrivial $\C [X ]$-submodule $L$ of the module  $\cC_{n-2} (X)$. \footnote{
In a more general setting
an accurate proof of Corollary \ref{nc.07.13.20basic}
is given below in Corollary \ref{nc.07.13.20} (similarly we only sketch the proofs of Proposition \ref{nc.07.13.30basic} and Theorem \ref{nc.07.14.10basic}
since they are special cases of Proposition \ref{nc.07.13.30} and Theorem \ref{nc.07.14.10} respectively).}

\end{corollary}

Let $\mu (x)\subset \C [X]$ be the maximal ideal of functions vanishing at  $x \in X$ and let $L$ be
the largest $\C [X ]$-submodule of  $D^{-1} \circ \Theta (\LieAO (X ))$. Since $\cC_{n-2} (X)$ is a finitely generated
$\C [X]$-module, Lemma \ref{nc.12.27.10} implies that  equality $L=\cC_{n-2} (X)$ holds as soon
as  $L/\mu (x) L =\cC_{n-2} (X)/ \mu (x) \cC_{n-2} (X)$ for every $x \in X$. The latter is true provided  Condition (A$'$) below holds 
and we have the following.

\begin{proposition}\label{nc.07.13.30basic} Let  $X$ be a  variety equipped with an algebraic volume form $\omega$
and let $(\xi_j , \eta_j )_{j=1}^k$ be pairs of divergence-free
semi-compatible vector fields.
Let $I_j$ be the ideal associated with $(\xi_j , \eta_j )$,
and let $I_j(x)= \{ f (x) | f \in I_j \}$ for $x \in X$.
Suppose that \\

\noindent {\rm (A$'$)} \hspace{0.3cm} for every $x \in X$ the set $\{ I_j (x) \xi_j (x) \wedge \eta_j (x) \}_{j=1}^k$
generates the fiber $\Lambda^2 T_{x}X$ of $\Lambda^2 TX$ over $x$. \\

Then $\Theta (\LieAO (X))$ contains $\cB_{n-1} (X)$ .
\end{proposition}

As a consequence of Proposition \ref{nc.07.13.30basic} we have (the basic form of) our main criterion.

\begin{theorem}\label{nc.07.14.10basic}  Let  $X$ be a  variety equipped with an algebraic volume form $\omega$
and pairs of divergence-free
semi-compatible vector fields satisfying Condition (A$'$) from Proposition \ref{nc.07.13.30basic}.
Suppose also that the following condition is true\\

\noindent {\rm (B$'$)} \hspace{0.3cm} the image of  $\Theta (\LieAO (X))$ under
De Rham homomorphism $\Phi_{n-1} : \cZ_{n-1} (X) \to H^{n-1} (X, \C)$ coincides with  %the subspace  $\Phi_{n-1}( \cZ_{n-1} (X))$  of
 $H^{n-1}(X , \C)$.\\

Then
$\Theta (\LieAO (X)) = \cZ_{n-1} (X)$  and therefore
$\LieAO (X) = \AVF_\omega (X)$, i.e., $X$ has the algebraic volume density property.

\end{theorem}

\begin{example}\label{ss}
Let $G$ be a  semi-simple  group  of dimension $n$. % has AVDP with respect to the Haar form:
For a simply connected $G$
the cohomology ring $H^*(G, \C )$ is isomorphic to $H^*(S^{k_1} \times \ldots \times S^{k_l}, \C )$ where $S^k$ is
a sphere of dimension $k$ and each $k_i \geq 3$ (e.g., see Theorem from Section 2.1 in \cite{Fuchs}).
Therefore (even in a non-simply connected case) $H^{n-1}(G, \C ) =0$ and  condition (B$'$) holds automatically.
The semi-compatible pairs of divergence-free vector fields on $SL_2$ from Example  \ref{nc.11.16.00}(2)  taken over all $SL_2$-subgroups of $G$ fulfill condition (A$'$)
(see also Lemma \ref{nc.08.10.10}  below and the discussion thereafter). Thus $G$ has AVDP  by Theorem \ref{nc.07.14.10basic}.

\end{example}

To extend this type of argument to homogeneous spaces one needs to consider, for instance, the case of $X=G/\Gamma$ where $\Gamma$
is a finite subgroup of $G$. Note that any vector field on $X$ can be viewed as a $\Gamma$-invariant vector field on $G$. This leads to
a theory of algebraic volume density for $\Gamma$-invariant fields and a more general setting for our criterion.

%\begin{notation}
\begin{sit}\label{nc.11.16.30}{\bf Notation.} {\rm Let $X$ be a
$\Gamma$-variety equipped with an algebraic volume form $\omega$.
Then for every $\gamma \in \Gamma$ one has
$$\omega \circ \gamma = \chi_\omega (\gamma ) \omega$$ where
$\chi_\omega$  is a map from  $\Gamma$  into the group of invertible regular functions on $X$.
We denote by $\LieAO (X , \Gamma)$
the Lie algebra generated by $\Gamma$-invariant complete algebraic vector fields of $\omega$-divergence zero.
Similarly $\AVF_\omega (X, \Gamma )$ will be the space of algebraic $\Gamma$-invariant divergence-free vector fields.
Such fields can be also viewed as elements of $\AVF_\omega (X/\Gamma )$. By  $\cC_k (X, \Gamma )$  we denote the space
%of $\Gamma$-quasi-invariant
of algebraic $k$-forms  $\alpha$ such that
$$\alpha \circ \gamma = \chi_\omega (\gamma ) \alpha $$ for every $\gamma \in \Gamma$. %corresponding to the character  $\chi_\omega$ .
By   $\cZ_k (X, \Gamma )$
and $\cB_k (X, \Gamma )$ we denote
the subspaces of closed and exact forms in $\cC_k (X, \Gamma )$.
The restriction of $\Theta : \AVF_\omega (X) \to \cZ_{n-1}(X)$ generates
an isomorphism between $\AVF_\omega (X, \Gamma )$ and  $\cZ_{n-1} (X, \Gamma )$.
 We assume further that $\chi_\omega$ is a character because we shall deal with the situation when
$\Gamma$ is contained in a connected group of automorphisms and the following holds.}
\end{sit}

\begin{proposition}\label{nc.02.24.13.3} Let $F$  be a connected group of algebraic automorphisms
of a smooth irreducible affine algebraic  variety $Y$ equipped with a volume form $\tau$. Then for every $\alpha \in F$
one has $\alpha^* (\tau) =c \tau$ for some $c \in \C^*$.

\end{proposition}

\begin{proof} Note that  $\alpha^* (\tau) =h \tau$ where $h$ is an element of the group $H$ of invertible functions on $Y$.
Note that $h$ must be in the same connected component of $H$ as constants. However
the group $H/\C^*$ is isomorphic to $H^1 (Y, \Z$) (e.g., see \cite{Fu}) and therefore it is discrete. Hence $h\in \C^*$.

\end{proof}

\begin{remark}\label{nc.02.13.13} In general if $\alpha$ does not belong to the connected component of identity in
the algebraic automorphism group the assumption that $\alpha^* (\tau) / \tau$ is constant
does not hold. Consider for instance a torus $(\C^*)^2$ equipped with coordinates $(z,w)$. Let
$\tau = ( \frac{z}{w} )\frac{{\rm d} z}{z} \wedge \frac{{\rm d} w} {w}$ and
$\alpha$ be given by $(z,w) \to (w,z)$. Then $\alpha^* (\tau) = -\frac{w^2}{z^2} \tau$.
However if $Y$ is weakly rationally connected in terminology of \cite[Proposition 6.4]{KaKu4}
then $\alpha^* (\tau) / \tau$ is constant for every algebraic automorphism $\alpha$.
\end{remark}

Since $\chi_\omega$ is a character $D_k$ sends  $\cC_{k-1} (X, \Gamma )$ into  $\cB_{k} (X, \Gamma )$ and even more.

\begin{lemma}\label{nc.08.13.40} Let Notation \ref{nc.11.16.30} hold and $\chi_\omega$  be a character, i.e.
$\omega \circ \gamma /\omega$ is constant for every $\gamma \in \Gamma$. Then
one has  $\cB_{k} (X, \Gamma ) =D_k (\cC_{k-1} (X, \Gamma ))$  for every $k\geq 1$.
In particular, if $\cC_{n-2} (X, \Gamma ) = D^{-1} \circ \Theta (\LieAO (X , \Gamma))$ then $\cB_{n-1} (X, \Gamma )\subset  \Theta (\LieAO (X , \Gamma)$.
\end{lemma}

\begin{proof} Let $\Xi$ be the  set of irreducible characters of representations of $\Gamma$. Then  $\cB_{k} (X)  =\oplus_{\chi \in \Xi} V_{\chi}$
and   $\cC_{k-1} (X)  =\oplus_{\chi \in \Xi} U_{\chi}$ where $V_{\chi}$ and $U_{\chi}$ are the subspaces corresponding
to character $\chi$.  Since $D_k(U_{\chi}) \subset V_{\chi}$ and $\cB_{k} (X) =D_k (\cC_{k-1} (X))$
one has $D_k(U_{\chi}) = V_{\chi}$  which implies the desired conclusion.

\end{proof}

Now Proposition \ref{bracket} implies the following.

\begin{corollary}\label{nc.07.13.20} Let  $X$ be a    $\Gamma$-variety equipped with an algebraic volume form $\omega$ and let
$\xi$ and $\eta$ be $\Gamma$-semi-compatible
 divergence-free vector fields on $X$ with an associated $\Gamma$-ideal $I \subset \C [X, \Gamma ]$.
Then $D^{-1} \circ \Theta (\LieAO (X , \Gamma))$ contains
the $\C [X, \Gamma ]$-submodule $I  \iota_{\xi}\iota_{\eta} \omega$  of the module $\cC_{n-2} (X, \Gamma )$.
\end{corollary}

\begin{proof}
Formula (1) from Proposition \ref{bracket}
implies that for $f \in \Ker \xi$, $g \in \Ker \eta$, and $\kappa =[f\xi , g \eta ]$
one has $\iota_{\kappa} \omega = {\rm d}  \iota_{f\xi}\iota_{g\eta} \omega$, i.e.
$(fg)  \iota_{\xi}\iota_{\eta} \omega \in D^{-1} (\iota_{\kappa} \omega) $.
Choose $\Gamma$-invariant $f$ and $g$.
Then by $\Gamma$-semi-compatibility
$D^{-1} \circ \Theta (\LieA^\omega (X))$ contains the $\C [X, \Gamma]$-module $I \iota_{\xi}\iota_{\eta} \omega$.
\end{proof}

\begin{notation}\label{nc.12.28.10}
Suppose that  $X$ is  an  $n$-dimensional $\Gamma$-variety equipped with a volume  form $\omega$.
For any subbundle $V$ of $\Lambda^k TX$
we denote by   $\cC_{n-k}^V (X, \Gamma )$ the ``dual" subbundle   of $\cC_{n-k}(X, \Gamma )$   that consists of all forms $\tau$ such that
for every $x \in X$ the restriction
of $\tau$ to $\Lambda^{n-k} T_xX$ is contained in the space generated by  elements  $\iota_{v_1} \ldots \iota_{v_k} \omega (x)$
where $v_1 \wedge \ldots \wedge v_k$ runs over  $V_x$.
\end{notation}

\begin{proposition}\label{nc.07.13.30} Let  $X$ be a $\Gamma$-variety equipped with an algebraic volume form $\omega$
and let $(\xi_j , \eta_j )_{j=1}^k$ be pairs of
$\Gamma$-semi-compatible  divergence-free  vector fields.
Let $I_j$ be the $\Gamma$-ideal associated with $(\xi_j , \eta_j )$,
and let $I_j(x)= \{ f (x) | f \in I_j \}$ for $x \in X$.
Suppose that $V$ is a subbundle of $\Lambda^2 TX$ such that \\

\noindent  {\rm (A'')}  \hspace{0.3cm} for every $x \in X$ the set $\{ I_j (x) \xi_j (x) \wedge \eta_j (x) \}_{j=1}^k$
generates the fiber $V_{x}$ of $V$ over $x$. \\

Then $\Theta (\LieAO (X, \Gamma ))$ contains $D (\cC_{n-2}^V (X, \Gamma))$.
In particular, if  one has additionally 
$$V=\Lambda^2 TX $$   (in which case we say that   Condition (A) holds)
then $\Theta (\LieAO (X, \Gamma ))$ contains $\cB_{n-1} (X, \Gamma)$ .
\end{proposition}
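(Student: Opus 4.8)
The plan is to realize $D(\cC_{n-2}^V(X,\Gamma))$ as the image under $D$ of a single finitely generated $\C[X,\Gamma]$-module that sits inside $D^{-1}\circ\Theta(\LieAO(X,\Gamma))$, built by gluing the modules that Corollary \ref{nc.07.13.20} attaches to the individual pairs $(\xi_j,\eta_j)$. Concretely, for each $j\in\{1,\dots,k\}$, Corollary \ref{nc.07.13.20} (see its proof, which rests on Proposition \ref{bracket}) shows that the $\C[X,\Gamma]$-module $M_j:=I_j\cdot\iota_{\xi_j}\iota_{\eta_j}\omega$ is a submodule of $\cC_{n-2}(X,\Gamma)$ lying in $D^{-1}\circ\Theta(\LieAO(X,\Gamma))$; here $\chi_\omega$-quasi-invariance of $\iota_{\xi_j}\iota_{\eta_j}\omega$ follows from $\Gamma$-invariance of $\xi_j,\eta_j$ and $\omega\circ\gamma=\chi_\omega(\gamma)\omega$. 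Set $N:=M_1+\dots+M_k$. This is again a $\C[X,\Gamma]$-submodule of $\cC_{n-2}(X,\Gamma)$ lying in $D^{-1}\circ\Theta(\LieAO(X,\Gamma))$, so $D(N)\subseteq\Theta(\LieAO(X,\Gamma))$.

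Next I would pin down the ambient module that $N$ should exhaust. Contraction with $\omega$ gives a bundle isomorphism $\phi\colon\Lambda^2TX\to\cC_{n-2}$, $v_1\wedge v_2\mapsto\iota_{v_1}\iota_{v_2}\omega$, which by Notation \ref{nc.12.28.10} carries the subbundle $V$ onto the subbundle $\cC_{n-2}^V$. The containment half of Condition~(A) ($f(x)\,\xi_j(x)\wedge\eta_j(x)\in V_x$ for all $f\in I_j$, all $x$), together with the fact that $\Lambda^2TX/V$ is torsion-free since $V$ is a subbundle, shows that $f\cdot(\xi_j\wedge\eta_j)$ is a global section of $V$ for every $f\in I_j$; applying $\phi$ gives $M_j\subseteq\cC_{n-2}^V(X,\Gamma)$, hence $N\subseteq M:=\cC_{n-2}^V(X,\Gamma)$. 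Since $\cC_{n-2}^V$ is a vector bundle on the affine variety $X$ and $\Gamma$ is finite, $M$ is a finitely generated $\C[X,\Gamma]$-module.

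The heart of the matter is $N=M$, which I would deduce from the Nakayama-type Lemma \ref{nc.12.27.10}: it suffices that the image of $N$ in $M/\mu M$ be all of $M/\mu M$ for every maximal ideal $\mu$ of $\C[X,\Gamma]$. I would reduce this to a fibrewise statement over the points of $X$. Pass to the $\C[X]$-modules $\tilde N:=\C[X]\cdot N$ and $\tilde M:=\cC_{n-2}^V(X)$. The ideal $I_j\C[X]$ has the same zero set as $I_j$, hence $(I_j\C[X])(x)=I_j(x)$ for every $x$, so the image of $\tilde N$ in the fibre $\tilde M_x$ is $\sum_j I_j(x)\,\iota_{\xi_j(x)}\iota_{\eta_j(x)}\omega(x)$; by the generation half of Condition~(A) the bivectors $I_j(x)\,\xi_j(x)\wedge\eta_j(x)$ span $V_x$, and $\phi$ then identifies this image with $(\cC_{n-2}^V)_x=\tilde M_x$. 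By Lemma \ref{nc.12.27.10} applied over $\C[X]$ we get $\tilde N=\tilde M$. Decomposing $\C[X]$ and $\cC_{n-2}^V(X)$ into $\Gamma$-character-isotypic components --- multiplication shifting the character, $N$ sitting in the $\chi_\omega$-component, and the trivial component of $\C[X]$ being $\C[X,\Gamma]$ --- the $\chi_\omega$-component of $\tilde N=\C[X]\cdot N$ is just $\C[X,\Gamma]\cdot N=N$, so $\tilde N=\tilde M$ forces $N=M$. (When $\Gamma=\{1\}$ this passage is empty and one works directly on $X$.)

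It follows that $D(\cC_{n-2}^V(X,\Gamma))=D(M)=D(N)\subseteq\Theta(\LieAO(X,\Gamma))$, which is the first assertion. For the ``in particular'' part, $V=\Lambda^2TX$ makes $\phi(V)=\cC_{n-2}$ the full form bundle, so $\cC_{n-2}^V(X,\Gamma)=\cC_{n-2}(X,\Gamma)$ and $D(\cC_{n-2}(X,\Gamma))=\cB_{n-1}(X,\Gamma)$ by Lemma \ref{nc.08.13.40}; hence $\Theta(\LieAO(X,\Gamma))\supseteq\cB_{n-1}(X,\Gamma)$. I expect the only genuinely delicate point to be the Nakayama bookkeeping: knowing that $\cC_{n-2}^V(X,\Gamma)$ is finitely generated (which is exactly why $V$ is required to be a subbundle, not an arbitrary subsheaf) and checking that comparing $N$ with $M$ modulo a maximal ideal of $\C[X,\Gamma]$ really does reduce to the pointwise Condition~(A) on $X$; everything else is a formal consequence of Corollary \ref{nc.07.13.20} and Proposition \ref{bracket}.
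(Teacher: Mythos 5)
Your proposal is correct and takes essentially the same route as the paper's own proof: it builds the module $L=\sum_{j=1}^k I_j\,\iota_{\xi_j}\iota_{\eta_j}\omega$ inside $D^{-1}\circ\Theta(\LieAO(X,\Gamma))$ via Corollary \ref{nc.07.13.20}, compares it with $\cC_{n-2}^V(X,\Gamma)$ through the Nakayama-type Lemma \ref{nc.12.27.10} with Condition (A) providing the fibrewise surjectivity, and invokes Lemma \ref{nc.08.13.40} for the case $V=\Lambda^2TX$. The only difference is that you carefully justify, by passing to $\C[X]$ and descending through the $\Gamma$-isotypic decomposition, the step the paper treats as immediate (``the last equality is, of course, equivalent to Condition (A)''), which is a welcome but not essentially different elaboration.
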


\begin{proof} By Corollary \ref{nc.07.13.20} $D^{-1} \circ \Theta (\LieAO (X, \Gamma ))$ contains
a $\C [X, \Gamma ]$-module $L$ of form $$L= \sum_{j=1}^k I_j \iota_{\xi_j}\iota_{\eta_j} \omega .$$
Treat now $\cC_{n-2}^V (X, \Gamma )$ as a  finitely generated $\C [X/\Gamma ]$-module, and $L$ as a submodule of this module .
 By Lemma \ref{nc.12.27.10}  the module $\cC_{n-2}^V (X, \Gamma )$ coincides with $L$ if for every $x \in X/\Gamma $ one
has $$\cC_{n-2}^V (X, \Gamma)/ (\mu (x)\cC_{n-2}^V (X, \Gamma)) = L / (\mu (x)L)$$
where $\mu (x)$ is the maximal ideal in $\C [X/\Gamma ]$ associated
with the point $x$.
The last equality is, of course, equivalent to Condition (A'').
Thus we have the desired conclusion.

\end{proof}

\begin{corollary}\label{nc.01.01.10}  Let  $X$ be a $\Gamma$-variety equipped with an algebraic volume form $\omega$
and let $(\xi_j , \eta_j )_{j=1}^{k_0}$ be pairs of divergence-free
$\Gamma$-semi-compatible vector fields.
Suppose that  $F$
is a group of algebraic automorphisms of $X$ with the following properties: 

{\rm (i)} the natural $F$-action commutes with the $\Gamma$-action and preserves $\omega$ up to constant factors;

{\rm (ii)} $F$ induces a transitive action on $X/\Gamma$ \footnote{Condition (ii) implies that the $\Gamma$-action is free.}.

Suppose that $V$ is the subbundle of $\Lambda^2 TX$ such that it is invariant under the natural $F$-action
and at a general point $x_0 \in X$ its fiber $V_{x_0}$ is generated by the set
$\{ \xi_j(x_0) \wedge \eta_j(x_0) \}_{j=1}^k$.
Then $\Theta (\LieAO (X, \Gamma ))$ contains $D (\cC_{n-2}^V (X, \Gamma))$  and therefore
by Lemma \ref{nc.08.13.40} $\Theta (\LieAO (X, \Gamma ))\supset \cB_{n-1} (X, \Gamma)$
provided $\chi_\omega$ is a character. 
\end{corollary}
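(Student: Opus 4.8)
The plan is to reduce Corollary \ref{nc.01.01.10} to Proposition \ref{nc.07.13.30} by using the group $F$ to upgrade the ``generic'' Condition (A) at the single point $x_0$ to Condition (A) at every point of $X$. First I would set $I_j$ to be the $\Gamma$-ideal associated with the pair $(\xi_j,\eta_j)$, and observe that by definition of the associate ideal $I_j$ is nonzero, so $I_j(x_0)=\C$ for $x_0$ outside a proper closed subset; shrinking the generic point if necessary, I may assume that at our chosen general point $x_0$ all $I_j(x_0)=\C$ and the vectors $\{\xi_j(x_0)\wedge\eta_j(x_0)\}_{j=1}^{k}$ span $V_{x_0}$. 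The key point is then that $F$ acts transitively on $X/\Gamma$, hence the $F$-orbit of $x_0$ meets every $\Gamma$-orbit; combined with $\Gamma$-invariance of all the data this will let me move the spanning statement at $x_0$ to an arbitrary point.

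Next I would make precise how $F$ transports the relevant objects. Each $\phi\in F$ preserves $\omega$ up to a constant and commutes with $\Gamma$, so $\phi_*\xi_j$ and $\phi_*\eta_j$ are again $\Gamma$-invariant completely integrable fields whose $\Theta$-images lie in $\cB_{n-1}(X)$ (the class of being exact is preserved by the automorphism, and the constant factor is harmless). The fields $\phi_*\xi_j,\phi_*\eta_j$ need not be among the original list, so I would enlarge the family: let the new family consist of all pairs $(\phi_*\xi_j,\phi_*\eta_j)$ as $\phi$ ranges over $F$ and $j$ over $1,\dots,k_0$. Two issues must be handled here. First, this enlarged family is a priori infinite, whereas Proposition \ref{nc.07.13.30} wants finitely many pairs; but the subbundle $V$ is coherent, so the condition ``the chosen pairs span $V_x$ for all $x$'' is an open condition that, once satisfied by the whole $F$-family, is satisfied by some finite subfamily (by Noetherianity of $X$, one covers $X$ by finitely many opens on each of which finitely many pairs already span). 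Second, one must check that each $(\phi_*\xi_j,\phi_*\eta_j)$ is again $\Gamma$-semi-compatible with associate ideal $\phi_*(I_j)$: this is immediate because $\phi$ induces an automorphism of $X/\Gamma$ carrying $\Ker\xi_j\cap\C(X,\Gamma)$ to $\Ker(\phi_*\xi_j)\cap\C(X,\Gamma)$ and ideals to ideals, so semi-compatibility and the associate ideal are transported verbatim.

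With the enlarged finite family in hand, I would verify the hypothesis (A) of Proposition \ref{nc.07.13.30} at every $x\in X$. Given $x$, since $F$ is transitive on $X/\Gamma$ there is $\phi\in F$ and $\gamma\in\Gamma$ with $\gamma\phi(x_0)=x$; because all our fields and $V$ are $\Gamma$-invariant it suffices to hit the $\Gamma$-orbit of $x$, so I may take $x=\phi(x_0)$. Then $\phi_*\xi_j(x)=d\phi\big(\xi_j(x_0)\big)$ etc., and $d\phi$ carries $V_{x_0}$ onto $V_{\phi(x_0)}=V_x$ by $F$-invariance of $V$; since $\{\xi_j(x_0)\wedge\eta_j(x_0)\}$ spanned $V_{x_0}$ and each $I_j(x_0)=\C$, their $d\phi$-images, which are precisely the vectors $I_j'(x)\,\phi_*\xi_j(x)\wedge\phi_*\eta_j(x)$ appearing in the enlarged family (with associate ideal $I_j'=\phi_*I_j$ satisfying $I_j'(x)=\C$), span $V_x$. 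Hence Condition (A) holds everywhere for the enlarged finite family, Proposition \ref{nc.07.13.30} applies, and we conclude $\Theta(\LieAO(X,\Gamma))\supseteq D(\cC_{n-2}^V(X,\Gamma))$.

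The main obstacle I anticipate is the bookkeeping in the third paragraph: passing from the infinite $F$-orbit of pairs to a finite subfamily while keeping track of the associate ideals, and making sure that ``spanning $V_x$ with the right ideal coefficients'' is genuinely an open condition in $x$ for a fixed finite family (so that a finite subcover exists). Everything else — the behavior of $\Theta$, $D$, exactness, and semi-compatibility under the automorphisms in $F$ — is routine functoriality, and the transitivity of $F$ on $X/\Gamma$ does exactly the geometric work of spreading the generic condition to all points.
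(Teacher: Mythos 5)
Your proposal is correct and follows essentially the same route as the paper: the paper's proof also notes that $I_j(x_0)\ne 0$ at the general point, observes that automorphisms in $F$ preserve complete integrability, $\omega$-divergence zero (up to the constant factor) and semi-compatibility, and uses transitivity of $F$ on $X/\Gamma$ to enlarge $(\xi_j,\eta_j)_{j=1}^{k_0}$ to a finite family of pairs satisfying Condition (A) everywhere, after which Proposition \ref{nc.07.13.30} applies. Your extra bookkeeping (transport of the associate $\Gamma$-ideals under $F$, openness of the spanning condition and extraction of a finite subfamily) merely makes explicit what the paper leaves implicit.
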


\begin{proof}
Since $x_0$ is a general point $I_j(x_0)=\{ f(x_0) | f \in I_j \} \ne 0$ for every $j$
where $I_j$ is the $\Gamma$-ideal associate with $(\xi_j , \eta_j)$. In particular,
$\{ I_j (x_0) \xi_j (x_0) \wedge \eta_j (x_0) \}_{j=1}^{k_0}$
generates $V_{x_0}$.
Since every automorphism $\alpha \in F$ sends $\omega$ into $c \omega , \, c\in \C^*$,
it  preserves the set of vector fields with $\omega$-divergence zero. Complete  algebraic vector fields
are  also preserved by $\alpha$. In combination with
transitivity of the $F$-action on $X/\Gamma$ this implies that we can extend
the sequence to $(\xi_j , \eta_j )_{j=1}^{k_0}$ to  a larger sequence of pairs of semi-compatible
divergence-free vector fields  of  form $\{ (\alpha_*(\xi_j) , \alpha_*(\eta_j) ) | j=1, \ldots , k_0; \,\alpha \in F \} $  for which Condition (A) from Proposition \ref{nc.07.13.30}
holds. Hence we are done.

\end{proof}

As a consequence of Proposition \ref{nc.07.13.30} we have our main criterion.

\begin{theorem}\label{nc.07.14.10}  
Let  $X$ be a $\Gamma$-variety equipped with an algebraic volume form $\omega$
and let $(\xi_j , \eta_j )_{j=1}^k$ be pairs of
$\Gamma$-semi-compatible  divergence-free  vector fields.
Let $I_j$ be the $\Gamma$-ideal associated with $(\xi_j , \eta_j )$
and $I_j(x)= \{ f (x) | f \in I_j \}$ for $x \in X$. Suppose that\\

\noindent {\rm (A)}  \hspace{0.3cm} for every $x \in X$ the set $\{ I_j (x) \xi_j (x) \wedge \eta_j (x) \}_{j=1}^k$ generates  $\Lambda^2 T_{x}X$.  \\

Suppose also that the following condition is true\\
 
\noindent {\rm (B)} \hspace{0.3cm} the image of  $\Theta (\LieAO (X,\Gamma))$  under
De Rham homomorphism $\Phi_{n-1} : \cZ_{n-1} (X) \to H^{n-1} (X, \C)$ coincides with the subspace  $\Phi_{n-1}( \cZ_{n-1} (X, \Gamma))$  of $H^{n-1}(X , \C)$.\\

Then
$\Theta (\LieAO (X, \Gamma )) = \cZ_{n-1} (X, \Gamma)$  and therefore
$\LieAO (X, \Gamma ) = \AVF_\omega (X, \Gamma)$.

\end{theorem}

\begin{definition}\label{nc.11.20.10} (1) If $X$ is a    $\Gamma$-variety equipped with an algebraic volume form $\omega$
and the equality $\LieAO (X, \Gamma ) = \AVF_\omega (X, \Gamma)$ holds then we say that $X$ has $\Gamma$-AVDP
(with respect to $\omega$).

(2) If under the assumption of Theorem \ref{nc.07.14.10} the group $\Gamma$ is trivial (or if it acts trivially)
then we say that the pair $(X, \omega )$ satisfies Condition (B). In the case of a nontrivial $\Gamma$-action
we speak about the validity of Condition (B) for the triple $(X, \omega , \Gamma )$.

(3) Note that $\omega$ induces on  $X_0=X/\Gamma$ a multi-valued differential form $\omega_0$ such that
it does not vanish and locally any two branches of $\omega_0$ differ by a constant factor. We call such
$\omega_0$ a multi-volume form on $X_0$.  Since divergence-free algebraic vector fields are the same
for proportional volume forms one can define such the space $\AVF_{\omega_0} (X_0)$ of
such fields with respect to $\omega_0$ (they are nothing but
the fields on $X_0$ induced by $\AVF_\omega (X, \Gamma)$).  Similarly one can define
$\IVF_{\omega_0}$ and $\Lie_{\rm alg}^{\omega_0} (X_0 )$.  In this sense we can speak 
about AVDP of $X_0$ with respect to $\omega_0$ which means the equality
$\Lie_{\rm alg}^{\omega_0} (X_0 ) = \AVF_{\omega_0} (X_0)$.
If the $\Gamma$-action on $X$ extends to a $\Gamma'$-action for a finite group $\Gamma'$ 
such that $\Gamma$ is a normal subgroup of $\Gamma'$ then $X_0$ is equipped with an action of $\Gamma_0=\Gamma'/\Gamma$, 
and we can speak in a similar manner about $\Gamma_0$-AVDP of $X_0$ which is the same as AVDP for $X_0/\Gamma_0$ with respect
to the multi-volume form induced on $X_0/\Gamma_0$.

\end{definition}

Sections 4-6  of the paper are devoted to the application of our criterion to prove AVDP for homogeneous spaces.

\section{Condition (B) for homogeneous spaces}

Actually,  in this section  we are checking at first glance a stronger condition that the image of $\Theta (\IVF_\omega (X))$ under
De Rham homomorphism $\Phi_{n-1} : \cZ_{n-1} (X) \to H^{n-1} (X, \C)$ coincides with $H^{n-1} (X, \C )$. However
it is equivalent to Condition (B) by the following proposition.

\begin{proposition}\label{nc.08.11.10} Let $X$ and $\omega$ be as in  Notation  \ref{nc.11.16.30} and
$N$ be the subspace of $\LieAO (X)$ generated by elements that can be presented as Lie brackets  of other elements. Then

{\rm (1)} $\Theta (N)$ is contained in the kernel of $\Phi_{n-1}$,

 {\rm (2)} the image of $\Theta (\IVF_\omega (X))$ under $\Phi_{n-1}$ generates $H^{n-1} (X, \C)$
 provided pair $(X, \omega )$ satisfies Condition (B).

 \end{proposition}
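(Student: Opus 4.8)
For part (1), the plan is to use Proposition \ref{bracket}. If $\zeta \in N$ is a Lie bracket $\zeta = [\xi,\eta]$ with $\xi, \eta \in \AVF_\omega(X)$, then formula (1) gives $\Theta(\zeta) = \iota_{[\xi,\eta]}\omega = {\rm d}\,\iota_\xi \iota_\eta \omega$, which is an exact $(n-1)$-form, hence lies in $\cB_{n-1}(X) = \ker \Phi_{n-1}$. Since $N$ is spanned by such brackets and $\Theta$ and $\Phi_{n-1}$ are linear, $\Theta(N) \subseteq \ker \Phi_{n-1}$. One small point to check: the elements of $\LieAO(X)$ are finite sums of iterated brackets of elements of $\IVF_\omega(X)$, and a priori an iterated bracket $[\xi,[\eta,\zeta]]$ is again a single bracket of two elements of $\AVF_\omega(X)$ (namely $\xi$ and $[\eta,\zeta]$), so the argument applies verbatim; I would state this explicitly so that ``presented as Lie brackets of other elements'' is unambiguous.

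For part (2), the structure is: $\LieAO(X)$ is generated as a vector space by $\IVF_\omega(X)$ together with $N$, i.e. $\LieAO(X) = \Span(\IVF_\omega(X)) + N$. Applying $\Phi_{n-1} \circ \Theta$ and using part (1) that $\Theta(N) \subseteq \ker \Phi_{n-1}$, we get
$$\Phi_{n-1}(\Theta(\LieAO(X))) = \Phi_{n-1}(\Theta(\Span(\IVF_\omega(X)))) = \Span\bigl(\Phi_{n-1}(\Theta(\IVF_\omega(X)))\bigr).$$
Now Condition (B) for the pair $(X,\omega)$ is, by Definition \ref{nc.11.20.10}(2), Condition (B) of Theorem \ref{nc.07.14.10} with trivial $\Gamma$, which reads $\Phi_{n-1}(\Theta(\LieAO(X))) = \Phi_{n-1}(\cZ_{n-1}(X))$. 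But $\Theta$ is an isomorphism onto $\cZ_{n-1}(X)$ and $\Phi_{n-1}$ is surjective onto $H^{n-1}(X,\C)$ (every closed algebraic form represents a class, and by algebraic de Rham / Grothendieck comparison the algebraic closed forms surject onto $H^{n-1}(X,\C)$ for a smooth affine variety), so $\Phi_{n-1}(\cZ_{n-1}(X)) = H^{n-1}(X,\C)$. Chaining the equalities, $\Span(\Phi_{n-1}(\Theta(\IVF_\omega(X)))) = H^{n-1}(X,\C)$, which is the claim.

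The step I expect to require the most care is the decomposition $\LieAO(X) = \Span(\IVF_\omega(X)) + N$: one must verify that $N$, defined as the span of elements that are brackets, together with the completely integrable generators, really exhausts the whole Lie algebra. This is essentially formal — any element of the Lie algebra generated by a set $S$ is a linear combination of elements of $S$ and of (iterated) brackets — but it is the place where one has to be precise about the definition of $N$ and make sure ``brackets of other elements'' is read as ``brackets of elements of $\LieAO(X)$'' rather than only ``brackets of elements of $\IVF_\omega(X)$''; with the former reading the identity is immediate, and part (1) still applies since those other elements lie in $\AVF_\omega(X)$ where Proposition \ref{bracket} is available. The surjectivity of $\Phi_{n-1}$ onto $H^{n-1}(X,\C)$ is the only genuinely external input, and it is standard for smooth affine varieties.
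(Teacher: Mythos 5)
Your argument is correct and follows the same route as the paper, whose proof is just the one-line remark that statement (2) follows from statement (1), which in turn follows from Formula (1) of Proposition \ref{bracket}. The details you supply --- reading $N$ as spanned by brackets of elements of $\AVF_\omega(X)$ so that Formula (1) applies, the decomposition $\LieAO(X)=\Span(\IVF_\omega(X))+N$, and the surjectivity of $\Phi_{n-1}$ on $\cZ_{n-1}(X)$ via Grothendieck's algebraic de Rham theorem --- are exactly the routine steps the paper leaves implicit.
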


%
 %\begin{remark} \label{cohom}  The previous Lemma shows the following: If $X$ has AVDP then  the image of $\Theta (\IVF_\omega (X))$ under $\Phi_{n-1}$ generates $H^{n-1} (X, \C)$.
 %\end{remark}
 %

 \begin{proof} The second statement is a consequence of the first one which in turn follows from
Formula (1).
%the formula for the Lie bracket of two vector fields $\nu$ and $\mu$ on $X$
%$$[\nu , \mu ] =\lim_{t \to  0} \frac {\varphi^*_t (\nu ) - \nu }{t}$$ where $\varphi_t$
%is the phase flow generated by $\mu$. Indeed, let $\xi_t =\varphi^*_t (\nu ) - \nu$ and $\nu$ and
%$\mu$ be in $\LieAO (X)$.  Suppose that $\alpha_t = \Theta (\xi_t)/t$ for $ t \ne 0$, and $\alpha_0 =\Theta ([\nu , \mu ])$.
%Then $t\alpha_t = \Theta (\xi_t) =\Theta (\varphi^*_t (\nu )) - \Theta ( \nu )$ is contained in the kernel
%of $\Phi_{n-1}$ since the phase flow $\varphi_t$ yields a trivial action on $H^*(X)$. Thus $\alpha_t$ belongs
%to the this kernel for $t \ne 0$. The subspace of exact forms $\cB_{n-1} (X)$ is closed (with respect
%to the compact-open topology)
%in the space $\cZ_{n-1} (X)$ because by the De Rham theorem a closed form is exact iff its integrals
%over a finite number of singular cycles vanish. Hence $\alpha_0 =\lim_{t\to 0} \alpha_t$ is also in the kernel of $\Phi_{n-1}$
%which implies the first statement.

 \end{proof}

\begin{lemma}\label{nc.07.15.15} Let $T$ be a torus $(\C^*)^n$. Then Condition (B)  is
valid for an invariant volume form on $T$.

\end{lemma}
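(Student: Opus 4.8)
The plan is to work explicitly on the torus $T=(\C^*)^n$ with coordinates $t_1,\dots,t_n$ and the invariant volume form $\omega=\tfrac{dt_1}{t_1}\wedge\cdots\wedge\tfrac{dt_n}{t_n}$. First I would recall that $H^{n-1}(T,\C)$ has a standard basis given by the classes of the forms $\beta_i=\tfrac{dt_1}{t_1}\wedge\cdots\wedge\widehat{\tfrac{dt_i}{t_i}}\wedge\cdots\wedge\tfrac{dt_n}{t_n}$, $i=1,\dots,n$ (with the hat denoting omission); these are precisely the $(n-1)$-forms dual, under $\Theta$, to the semi-simple vector fields $\delta_i=t_i\,\partial/\partial t_i$. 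Indeed $\iota_{\delta_i}\omega=\pm\beta_i$, and each $\delta_i$ has $\omega$-divergence zero and is completely integrable (it generates the coordinate $\C^*$-action on the $i$-th factor). Hence each basis class of $H^{n-1}(T,\C)$ already lies in $\Phi_{n-1}(\Theta(\IVF_\omega(T)))$, so by the remark preceding the lemma (and Proposition \ref{nc.08.11.10}) this is exactly Condition (B).

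More precisely, the steps I would carry out are: (1) fix the coordinates and $\omega$ as above; (2) verify by direct computation that $\diver_\omega(\delta_i)=0$ and that $\delta_i$ is completely integrable, so $\delta_i\in\IVF_\omega(T)$ and $\Theta(\delta_i)=\pm\beta_i\in\cZ_{n-1}(T)$; (3) recall, e.g. via the Künneth decomposition of $H^\ast((\C^*)^n,\C)=\bigwedge^\ast\big(\bigoplus_i\C[\tfrac{dt_i}{t_i}]\big)$, that the classes $[\beta_1],\dots,[\beta_n]$ form a basis of $H^{n-1}(T,\C)$; (4) conclude that $\Phi_{n-1}$ restricted to $\Theta(\IVF_\omega(T))$ is already surjective onto $H^{n-1}(T,\C)$, which is the ``stronger'' form of Condition (B) discussed at the start of the section, and is equivalent to Condition (B) for the pair $(T,\omega)$ by Proposition \ref{nc.08.11.10}(2).

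There is essentially no hard obstacle here: the only thing to be a little careful about is the bookkeeping of signs and the identification $\iota_{\delta_i}\omega=(-1)^{i-1}\beta_i$, and the assertion that the $\delta_i$ are genuinely completely integrable algebraic vector fields (their flows are the algebraic $\C^*$-actions $t_i\mapsto\lambda t_i$), both of which are routine. The one conceptual point worth stating explicitly is why surjectivity onto all of $H^{n-1}(T,\C)$ is the relevant statement: since here $\Gamma$ is trivial, $\cZ_{n-1}(T,\Gamma)=\cZ_{n-1}(T)$ and $\Phi_{n-1}(\cZ_{n-1}(T,\Gamma))=H^{n-1}(T,\C)$, so Condition (B) reads $\Phi_{n-1}(\Theta(\LieAO(T)))=H^{n-1}(T,\C)$, and this follows a fortiori from $\Phi_{n-1}(\Theta(\IVF_\omega(T)))=H^{n-1}(T,\C)$ because $\IVF_\omega(T)\subseteq\LieAO(T)$.
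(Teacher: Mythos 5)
Your proof is correct and follows essentially the same route as the paper: the paper's own argument also takes the semi-simple fields $\nu_i=z_i\,\partial/\partial z_i$ (divergence-free since they generate $\omega$-preserving $\C^*$-actions), computes $\iota_{\nu_i}\omega$ to be the $(n-1)$-form with the $i$-th factor omitted, and observes via K\"unneth that these classes form a basis of $H^{n-1}(T,\C)$, so that $\Phi_{n-1}(\Theta(\IVF_\omega(T)))$ is already all of $H^{n-1}(T,\C)$. Your extra remarks on signs and on why surjectivity of $\Phi_{n-1}\circ\Theta$ on $\IVF_\omega(T)\subseteq\LieAO(T)$ suffices for Condition (B) with trivial $\Gamma$ are accurate bookkeeping the paper leaves implicit.
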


\begin{proof} Equip $T=(\C^*)^n$ with a natural coordinate system
$(z_1, \ldots , z_n)$, i.e. an
invariant volume form can be chosen as $\omega = \frac {{\rm d} z_1}{z_1}\wedge \ldots \wedge  \frac {{\rm d} z_n}{z_n}$.
By the K\"unneth formula $H^{n-1} (T, \C ) \simeq \Z^n$.
Consider the semi-simple vector fields $\nu_i = z_i \frac {\partial} {\partial z_i}$, i.e. $\nu_i$ is associated
with the $\C^*$-action $(z_1, \ldots , z_i , \ldots , z_n) \to (z_1, \ldots , \lambda z_i , \ldots , z_n)$ preserving
$\omega$. Hence $\nu_i$ has the $\omega$-divergence zero. It remains to note that
$$\iota_{\nu_i} \omega =  \frac {{\rm d} z_1}{z_1}\wedge \ldots \wedge  \frac {{\rm d} z_{i-1}}{z_{i-1}}
 \wedge \frac {{\rm d} z_{i+1}}{z_{i+1}}\wedge \ldots \wedge  \frac {{\rm d} z_n}{z_n}$$ for $i=1, \ldots , n$
and that such an $n$-tuple of $(n-1)$-forms yield a basis in $H^{n-1} (T, \C )$ which proves
the desired statement.
\end{proof}

As a consequence we get a result first proven by Varolin  \cite{V1} .

\begin{corollary}\label{nc.11.23.10}
Any torus $X=(\C^*)^n$ has AVDP with respect to an invariant volume form $\omega$.
\end{corollary}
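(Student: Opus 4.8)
The plan is to deduce the statement from the main criterion, Theorem~\ref{nc.07.14.10}, applied with a trivial group $\Gamma$. Thus I must (i) verify Condition (B) and (ii) produce finitely many $\Gamma$-semi-compatible pairs of $\omega$-divergence-free completely integrable vector fields with exact $\Theta$-images satisfying the strong Condition (A) of Proposition~\ref{nc.07.13.30}. Part (i) is already available: for an invariant volume form on $X=(\C^*)^n$ it is exactly Lemma~\ref{nc.07.15.15}. So the whole content lies in (ii). Fix coordinates $z_1,\dots,z_n$ with $\omega=\tfrac{{\rm d}z_1}{z_1}\wedge\dots\wedge\tfrac{{\rm d}z_n}{z_n}$ and, as in the proof of Lemma~\ref{nc.07.15.15}, let $\nu_i=z_i\,\partial/\partial z_i$, a semi-simple field of $\omega$-divergence zero. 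For each ordered pair of distinct indices $i\ne j$ the fields I would use are the replicas
$$\xi_{ij}=z_iz_j\,\partial/\partial z_i=z_j\nu_i,\qquad \eta_{ij}=z_iz_j\,\partial/\partial z_j=z_i\nu_j.$$

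Then I would check, for each such pair, the four required properties. (a) Since $z_j\in\Ker\nu_i$ and $z_i\in\Ker\nu_j$, the fields $\xi_{ij},\eta_{ij}$ are completely integrable (their holomorphic flows are $x\mapsto\exp(t\,z_j(x)\nu_i)(x)$ and $x\mapsto\exp(t\,z_i(x)\nu_j)(x)$) and have $\omega$-divergence zero, because $\diver_\omega(z_j\nu_i)=\nu_i(z_j)=0$; hence $\xi_{ij},\eta_{ij}\in\IVF_\omega(X)$. (b) Their $\Theta$-images are exact: this is an instance of Proposition~\ref{nc.08.11.20} (take $\xi=\nu_i\in\LieAO(X)$, $\eta=\nu_j$, $f=z_i$, noting $[\nu_i,\nu_j]=0$ and $\Ker\nu_i\ne\Ker\nu_j$, so that $\eta_{ij}=\nu_i(z_i)\nu_j$); concretely $[\nu_i,\eta_{ij}]=\eta_{ij}$, and Formula~(1) gives $\iota_{\eta_{ij}}\omega={\rm d}\bigl(z_i\,\iota_{\nu_i}\iota_{\nu_j}\omega\bigr)$, which is an exact algebraic $(n-1)$-form, and symmetrically for $\xi_{ij}$. (c) The pair is semi-compatible: $\Ker\xi_{ij}$ is the subring of Laurent polynomials not involving $z_i$ and $\Ker\eta_{ij}$ the subring of those not involving $z_j$, so the linear span of $(\Ker\xi_{ij})\cdot(\Ker\eta_{ij})$ is all of $\C[X]=\C[z_1^{\pm1},\dots,z_n^{\pm1}]$; hence the associate ideal is $\C[X]$ and $I_{ij}(x)=\C$ for every $x$. (d) Strong Condition (A): at every $x\in X$ one has $\xi_{ij}(x)\wedge\eta_{ij}(x)=z_i(x)^2z_j(x)^2\,(\partial/\partial z_i)\wedge(\partial/\partial z_j)|_x\ne0$, and over $1\le i<j\le n$ these are nonzero multiples of a basis of $\Lambda^2T_xX$, so $\{I_{ij}(x)\,\xi_{ij}(x)\wedge\eta_{ij}(x)\}_{i<j}$ generates $\Lambda^2T_xX$.

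Granting all this, Proposition~\ref{nc.07.13.30} gives $\cB_{n-1}(X)\subseteq\Theta(\LieAO(X))$, and then Theorem~\ref{nc.07.14.10}, whose remaining hypothesis is Condition (B) $=$ Lemma~\ref{nc.07.15.15}, yields $\Theta(\LieAO(X))=\cZ_{n-1}(X)$, i.e.\ $\LieAO(X)=\AVF_\omega(X)$, which is AVDP. The one genuinely substantive point is the existence of the pairs in (ii): because $(\C^*)^n$ admits no nontrivial algebraic $\C_+$-action, the only completely integrable algebraic fields at hand are the semi-simple $\nu_i$ and their replicas, and the $\nu_i$ themselves cannot be used in a Condition~(A) argument since $\iota_{\nu_i}\omega$ represents a generator of $H^{n-1}(X,\C)$ rather than an exact form. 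The key point (this is where Proposition~\ref{nc.08.11.20} enters) is that multiplying $\nu_i$ by the non-unit $z_j$ annihilates that cohomology class while keeping the field completely integrable and $\omega$-divergence-free; the remaining verifications are routine.
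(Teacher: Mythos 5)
Your proposal is correct and follows essentially the same route as the paper: Condition (B) via Lemma \ref{nc.07.15.15}, and strong Condition (A) via the fields $\nu_j(f)\,\nu_i$ (your choice $f=z_j$ gives $z_jz_i\,\partial/\partial z_i$) made exact by Proposition \ref{nc.08.11.20} and their wedge products, feeding into Proposition \ref{nc.07.13.30} and Theorem \ref{nc.07.14.10}. Your write-up merely makes explicit the semi-compatibility and complete-integrability checks that the paper leaves implicit (and, like the paper, one should note the trivial case $n=1$ separately).
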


\begin{proof} Condition (B) from Theorem \ref{nc.07.14.10} is valid for $X$ by Lemma \ref{nc.07.15.15}.
Hence aside from a trivial case of $k=1$ it suffices to check    Condition (A) from Proposition \ref{nc.07.13.30}.
For $i \ne j$ the semi-simple vector fields $\nu_i$ and $\nu_j$ commute where $\nu_i =z_i \partial / \partial z_i$.
%Hence for any nonconstant Laurent polynomial $f$ in $z_j$
%Proposition \ref{nc.08.11.20} implies that $\nu_i' =\nu_j (f) \nu_i \in\LieAO (X)$ corresponds to an exact form.
Consideration of wedge products $\nu_i \wedge \nu_j$ implies     Condition (A) and the desired conclusion.
\end{proof}

\begin{notation}\label{nc.07.14.25} Recall that every reductive group $G$
is a complexification of a  maximal compact  subgroup $K$ of $G$, i.e
$G =K^\C$. Similarly if $R$ is a closed reductive subgroup of $G$ then $R = L^\C$ for a maximal compact subgroup $L$ of $R$. Extending $L$
to a maximal compact subgroup of $G$ (which is unique up to conjugation by the Cartan-Iwasawa-Maltsev theorem)
we can suppose that it is contained in $K$. We need the following Mostow Decomposition theorem (e.g.,
see \cite{Mo1}, \cite{Mo2}, and also  \cite{Hei}[Section 3.1] ).
\end{notation}

\begin{theorem}\label{nc.07.14.40} Let $K$ and $L$ be as before and $\lgoth$ (resp. $\lgoth^\C$)
be the Lie algebra of $K$ (resp. $K^\C$).
Suppose that $L$ acts on $\lgoth^\C$ by the adjoint representation. Then there exists a $L$-invariant
linear subspace $\mgoth$ of $\lgoth$ such that the map $K\times_L \sqrt{-1} \mgoth \to K^\C /L^\C$
given by $(k,v) \to k \cdot \exp {vL^\C}$ is an isomorphism of topological $K$-spaces.

\end{theorem}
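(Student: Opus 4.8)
The plan is to establish the Mostow decomposition by a standard polar/moment-map argument adapted to the homogeneous-space setting, reducing everything to the well-known Cartan-type decomposition for the pair $(K^\C, K)$ itself and then quotienting by $L^\C$. First I would recall (or invoke) the classical Cartan decomposition: for a reductive group $G = K^\C$ with Lie algebra $\ggoth = \lgoth \oplus \sqrt{-1}\lgoth$, the map $K \times \sqrt{-1}\lgoth \to G$, $(k, Y) \mapsto k\exp(Y)$, is a diffeomorphism; equivalently $G/K$ is realized as $\exp(\sqrt{-1}\lgoth)$ and is a Cartan–Hadamard-type symmetric space on which $K$ acts by isometries with $\sqrt{-1}\lgoth$ as the tangent space at the base point. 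This is the engine; the point is to descend it through the right $L^\C$-action.

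Next I would exploit the $L$-action. Since $L$ is a compact subgroup of $K$ acting on $\sqrt{-1}\lgoth$ by the (restricted) adjoint representation, and $\lgoth^\C$ carries an $L$-invariant inner product (average any inner product over the compact group $L$, or use the Killing form twisted to be positive definite on $\sqrt{-1}\lgoth$), the subspace $\sqrt{-1}\lgoth$ splits $L$-equivariantly as $\sqrt{-1}\lgoth = \sqrt{-1}\lgoth_L \oplus \sqrt{-1}\mgoth$, where $\lgoth_L = \Lie(L)$ and $\mgoth$ is its orthogonal complement inside $\lgoth$ (here one uses that $\lgoth_L \subseteq \lgoth$ because $L \subseteq K$). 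This $\mgoth$ is the $L$-invariant linear subspace sought in the statement. The candidate map $\Psi: K \times_L \sqrt{-1}\mgoth \to K^\C/L^\C$, $[k,v] \mapsto k\exp(v)L^\C$, is then well defined precisely because $\mgoth$ is $L$-invariant and $\exp(\Lie(L^\C)) \subseteq L^\C$.

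The core of the argument is showing $\Psi$ is a bijection and a homeomorphism. For surjectivity and injectivity I would combine the Cartan decomposition of $G$ with the analogous decomposition of the subgroup $R = L^\C$: every $gL^\C \in G/L^\C$ has a representative $g = k\exp(Y)$ with $k \in K$, $Y \in \sqrt{-1}\lgoth$; write $Y = Y_1 + Y_2$ along the splitting, and use the fact that the geodesic symmetric space structure lets one "slide" the $\sqrt{-1}\lgoth_L$-component into $L^\C$ — more precisely, that $\exp(\sqrt{-1}\lgoth_L)L^\C = L^\C$ together with a convexity/uniqueness argument for the nearest point on the orbit $g \cdot L^\C$ in $G/K$. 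Uniqueness of $k$ up to $L$ and of $v$ follows from uniqueness in the Cartan decomposition of $G$ restricted to the $L$-orbit structure. For the topological statement, properness of the $K$-action and the fact that $K \times_L \sqrt{-1}\mgoth$ is the total space of a vector bundle over $K/L$ give that $\Psi$ is a continuous bijection between spaces on which $K$ acts properly, hence a $K$-equivariant homeomorphism; one can upgrade to a diffeomorphism if needed, but the statement only asks for an isomorphism of topological $K$-spaces.

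The main obstacle I anticipate is the "sliding" step: showing that the $\sqrt{-1}\lgoth_L$-component of $Y$ can genuinely be absorbed into the $L^\C$ coset without disturbing the $\mgoth$-component, i.e. that the decomposition $G/L^\C \cong K\times_L\sqrt{-1}\mgoth$ is compatible with, rather than merely analogous to, the decompositions of $G$ and of $R$. The clean way to handle this is via the moment map / distance-minimization picture: the $K$-orbits in the symmetric space $G/K$ meeting the flat $\exp(\sqrt{-1}\mgoth)$ transversally, with the complement $\exp(\sqrt{-1}\lgoth_L)$ exactly matching the orbit directions of the residual right $L^\C$-action. I would cite Mostow's original papers (\cite{Mo1}, \cite{Mo2}) or Heinzner \cite{Hei}[Section 3.1] for this convexity argument rather than reproducing it, since it is the classical content of the Mostow decomposition theorem; the novelty in this paper lies only in applying it, not in reproving it.
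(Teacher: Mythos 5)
The paper does not prove this statement at all: it is quoted verbatim as the classical Mostow decomposition theorem, with references to \cite{Mo1}, \cite{Mo2} and \cite{Hei}, and no proof environment follows. Your proposal, after sketching the standard polar-decomposition/$L$-invariant-complement setup, likewise defers the essential convexity (distance-minimization) step to exactly those sources, so your treatment is essentially the same as the paper's and is acceptable.
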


\begin{corollary}\label{nc.07.14.50} The homogeneous space $X:=G/R = K^\C / L^C$  is homotopy equivalent
to $K/L$ and there is a natural isomorphism between $H_{1} (X, \C )$ and $H^{n-1} (X, \C )$.
In particular, when $G$ is semi-simple then $H^{n-1} (X, \C )=0$ and Condition (B) holds.
\end{corollary}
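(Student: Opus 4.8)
The plan is to deduce Corollary~\ref{nc.07.14.50} from the Mostow Decomposition (Theorem~\ref{nc.07.14.40}) together with Poincar\'e duality and the basic topology of compact Lie groups. First I would observe that Theorem~\ref{nc.07.14.40} gives a $K$-equivariant homeomorphism $K^\C/L^\C \cong K\times_L \sqrt{-1}\mgoth$, and since $\sqrt{-1}\mgoth$ is a vector space, the bundle $K\times_L\sqrt{-1}\mgoth \to K/L$ is a (linear, hence contractible-fibre) vector bundle; therefore $X=G/R$ deformation retracts onto the zero section $K/L$ and in particular $X$ is homotopy equivalent to the compact manifold $K/L$. Note $K/L$ is a closed orientable manifold: it carries a $K$-invariant Riemannian metric, and orientability follows because the isotropy action of the (connected, when $G$ is connected) group $L$ on the tangent space at the base point lies in $SO$; in the reductive but possibly disconnected setting one invokes that $\mgoth$ is $L$-invariant and the relevant determinant character is trivial, or simply that $G/R$ being an affine variety equipped with the algebraic volume form $\omega$ forces orientability of the homotopy model.

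Next I would compute the dimension. Since $X$ is a complex manifold of complex dimension $n$, it has real dimension $2n$; the Mostow decomposition writes $2n = \dim_{\R}(K/L) + \dim_{\R}\sqrt{-1}\mgoth = \dim_{\R}(K/L) + \dim_{\R}\mgoth$, and the fibre dimension equals $\dim_\R \mgoth = \dim_\C X - \dim_\C(K/L)_{\text{as a real mfd}}$... more cleanly: $K/L$ is a compact real manifold whose dimension $d$ satisfies $d + \dim\mgoth = 2n$ while also, from $\ggoth^\C = \ggoth \oplus \sqrt{-1}\ggoth$ type counting, one gets $d = \dim_\R(K/L)$ and the complement has real dimension $n$... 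I should be careful here. Actually the clean statement I want is just that $K/L$, being the maximal compact submanifold onto which $X$ retracts, has real dimension $n$ (half of $\dim_\R X$), which is exactly what Mostow's splitting $K\times_L\sqrt{-1}\mgoth$ yields since the fibre $\sqrt{-1}\mgoth$ and the base $K/L$ have equal dimension (the fibre being the "imaginary directions" and the base the "real directions" of the $n$-complex-dimensional space). Granting $\dim_\R(K/L)=n$, Poincar\'e duality on the closed orientable $n$-manifold $K/L$ gives $H^{n-1}(K/L,\C)\cong H_1(K/L,\C)$, and homotopy invariance of (co)homology transports this to the natural isomorphism $H_1(X,\C)\cong H^{n-1}(X,\C)$ claimed in the corollary.

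Finally, for the semisimple case I would argue that $H_1(X,\C)=0$. When $G$ is semisimple, $K$ is a compact semisimple Lie group, so $\pi_1(K)$ is finite and $H_1(K,\C)=0$; more to the point $H^1(K/L,\C)=0$ because $H^1$ of a homogeneous space $K/L$ of a compact group is computed from $L$-invariant $1$-forms on $\kgoth$, i.e. from $(\kgoth/\lgoth)^{*L}$, and for $K$ semisimple the Killing form identifies this with the space of $\Ad L$-invariants in $\mgoth$; a nonzero such invariant $1$-form would, after averaging, descend from an invariant element of $\kgoth^*$ pairing nontrivially, contradicting semisimplicity (equivalently: $H^1(\kgoth,\R)=0$ for semisimple $\kgoth$ by Whitehead's lemma, and the homogeneous-space spectral sequence or direct Chevalley--Eilenberg computation gives $H^1(K/L,\R)=0$). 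Hence $H_1(X,\C)=H_1(K/L,\C)=0$, and by the displayed duality $H^{n-1}(X,\C)=0$. Then $\cZ_{n-1}(X,\C)=\cB_{n-1}(X)$, so every divergence-free field has $\Theta$-image exact and condition (B) is satisfied vacuously (the De Rham homomorphism $\Phi_{n-1}$ is the zero map into the zero group). The main obstacle I expect is the dimension/orientability bookkeeping for $K/L$ in the possibly-disconnected reductive case --- pinning down that $\dim_\R(K/L)=n$ and that $K/L$ is orientable so that Poincar\'e duality applies cleanly --- rather than anything in the semisimple vanishing, which is standard Lie theory.
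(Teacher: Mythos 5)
Your proposal follows essentially the same route as the paper: the Mostow decomposition exhibits $X\simeq K\times_L\sqrt{-1}\mgoth$ as a vector bundle over the compact real $n$-manifold $K/L$, giving the homotopy equivalence, Poincar\'e duality on $K/L$ gives $H_1\cong H^{n-1}$, and in the semisimple case $H_1(X,\C)=0$ forces $H^{n-1}(X,\C)=0$ so Condition (B) holds vacuously. The only (inessential) variation is that you justify the vanishing of $H_1$ via invariant forms and Whitehead's lemma, whereas the paper uses finiteness of $\pi_1(K)$ and the homotopy exact sequence of $K\to K/L$; your extra worries about orientability and $\dim_\R(K/L)=n$ concern points the paper passes over with the same brevity.
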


\begin{proof}
%Consider the family of homeomorphisms of $X \simeq K\times_L \sqrt{-1} \mgoth$ into itself
%given by the formula $(k, v) \to (k, tv)$ where $t$ is a real number running over the interval $[0,1]$.
%This shows that
Since  $X \simeq K\times_L \sqrt{-1} \mgoth$ is a vector bundle over $K/L$ we see that
$K/L$ is a retract of $X$ which is the first statement. Since $K/L$ is a real $n$-dimensional
compact manifold by the Poincare duality we have a natural isomorphism between
$H_1(K/L, \C )$ and $H^{n-1}(K/L , \C )$ which is the second statement. In the case of a semi-simple
$G$ the fundamental group $\pi_1 (K)$ is finite. Hence the exact homotopy sequence of the
locally trivial fibration $K \to K/L$ implies that $\pi_1 (X) \simeq \pi_1 (K/L)$ is finite.
Therefore $H_1(X, \C)=0$ and as a result $H^{n-1}(K/L , \C )=0$. Now Condition (B) holds automatically.

\end{proof}

\begin{notation}\label{nc.11.18.10} Let $G$ be a connected linear algebraic group, $R$ be a closed reductive subgroup of $G$
(in particular $X=G/R$ is affine by the Matsushima theorem \cite{Ma}), $G_0$ be a maximal reductive subgroup of $G$,
and $\cR_u$ be the normal subgroup of $G$ whose Lie algebra is the unipotent radical of the Lie algebra of $G$
(i.e. as an affine algebraic variety $\cR_u$ is isomorphic to a Euclidean space).
Then $G_0$ admits an unramified covering  $\hat G_0$ which is of form
$\hat G_0 \simeq \hat S \times \hat T$ with $\hat S$ being a simply connected semi-simple (Levi)
subgroup of the group $\hat G_0$ and a torus $\hat T$ being the connected component of the center of $\hat G_0$.

\end{notation}

\begin{lemma}\label{nc.07.28.10}  Let Notation \ref{nc.11.18.10} hold. Then $X=G/R$
can be presented as $X \simeq \hX /\Gamma$ where $\hX =Y \times \cR_u$ and $\Gamma$
is a finite subgroup of the center of $\hat G_0$ acting on
a $\hat G_0$-homogeneous space $Y$. Furthermore,
there is a $\hat G_0$-equivariant isomorphism  $\varphi : Y \to X_1 \times X_2$
where $X_1=\hat S /\hat R$ for some connected reductive  subgroup $\hat R$ of $\hat S$,
and $X_2$ is a subtorus in the connected component of the center of $\hat G_0$.

\end{lemma}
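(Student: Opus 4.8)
The plan is to build the presentation $X \simeq \hX/\Gamma$ in stages, starting from the Levi decomposition of $G$ and tracking how $R$ sits inside it. First I would write $G = G_0 \ltimes \cR_u$ (Mostow/Levi decomposition over $\C$) so that, as an algebraic variety, $G \simeq G_0 \times \cR_u$. Since $R$ is reductive, by conjugating we may assume $R \subseteq G_0$; then $G/R \simeq (G_0/R) \times \cR_u$ as $G_0$-varieties. So it suffices to analyze $G_0/R$ and replace $Y$ by $G_0/R$ (with $\hX = Y \times \cR_u$). Next I would pass to the covering $\hat G_0 \simeq \hat S \times \hat T$ of Notation \ref{nc.11.18.10}: the covering $\pi\colon \hat G_0 \to G_0$ has kernel a finite central subgroup, and $\hat R := \pi^{-1}(R)^\circ$ (the identity component of the preimage) is a reductive subgroup of $\hat G_0$ mapping onto $R$ with finite kernel. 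Setting $Y := \hat G_0/\hat R$, the group $\Gamma := \pi^{-1}(R)/\hat R$ is finite, central in $\hat G_0$ (being a subquotient of the central kernel together with the center constraints), and acts on $Y$ with $Y/\Gamma \simeq G_0/R$. This gives $X \simeq \hX/\Gamma$ with $\hX = Y \times \cR_u$.

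The remaining task is the product decomposition $Y \simeq X_1 \times X_2$. Here I would use that $\hat G_0 = \hat S \times \hat T$ is literally a direct product, and decompose $\hat R$ accordingly. Let $X_2$ be the image of $\hat R$ under the projection $\hat G_0 \to \hat T$; since $\hat T$ is a torus and $\hat R$ is reductive, this image is a subtorus $T'$ of $\hat T$, and I would take $X_2$ to be a complementary subtorus to $T'$ in $\hat T$ (tori are divisible, so $\hat T \simeq T' \times X_2$ up to isogeny — one must be slightly careful and possibly absorb a finite piece into $\Gamma$, which is harmless). The projection $\hat G_0 \to \hat S$ carries $\hat R$ onto a reductive subgroup $\hat R_1 \subseteq \hat S$; the subtlety is that $\hat R$ need not be the direct product of its two projections. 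To handle this I would argue that, after the complementary-torus choice and possibly enlarging $\Gamma$ by a finite central subgroup, the connected reductive group $\hat R$ splits as $\hat R \simeq \hat R_1 \times T'$ inside $\hat S \times \hat T$ — using that any homomorphism from a connected reductive group to a torus factors through the abelianization and that the relevant obstruction lives in a finite group which can be pushed into $\Gamma$. Then $Y = \hat G_0/\hat R \simeq (\hat S/\hat R_1) \times (\hat T/T') \simeq X_1 \times X_2$ with $X_1 = \hat S/\hat R_1$ ($\hat R_1 =: \hat R$ connected reductive in $\hat S$) and $X_2$ a subtorus of the center, $\hat G_0$-equivariantly.

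The main obstacle I anticipate is precisely this last splitting: ensuring that $\hat R \subseteq \hat S \times \hat T$ decomposes as a product so that the quotient factors, while keeping $\Gamma$ finite and central. One cannot expect $\hat R$ itself to be a direct product on the nose, but the discrepancy is a finite central extension phenomenon, and the whole point of introducing $\Gamma$ (and of passing to the covering $\hat G_0$) is to have a receptacle for exactly such finite ambiguities. A careful bookkeeping of the central finite subgroups — the kernel of $\pi$, the component group of $\pi^{-1}(R)$, and the finite overlap $T' \cap X_2$ — should show they can all be collected into a single finite central $\Gamma$ acting on the honest product $X_1 \times X_2 \times \cR_u$. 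The $\hat G_0$-equivariance is then automatic since every construction (projections, complementary torus, quotients) is carried out inside the direct product group $\hat G_0 = \hat S \times \hat T$ and is compatible with left translation.
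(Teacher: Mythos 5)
The first half of your argument (Mostow/Levi decomposition, conjugating $R$ into $G_0$, passing to the covering $\hat G_0 \simeq \hat S \times \hat T$, taking the identity component $\hat R_0$ of the preimage of $R$ and collecting the finite ambiguity into a central $\Gamma$) is exactly the paper's route. The gap is in your product decomposition of $Y$. You replace $\hat R_0$ by the product of its two projections, $\hat R_1 \times T' \subseteq \hat S \times \hat T$, and claim the discrepancy is a finite central phenomenon that can be absorbed into $\Gamma$. It cannot: the discrepancy can be positive-dimensional. Take $\hat G_0 = SL_2 \times \C^*$ and $R=\hat R_0=\{(\mathrm{diag}(a,a^{-1}),a)\mid a\in\C^*\}$, the graph of a character of the diagonal torus. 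Then $\hat R_1$ is the diagonal torus of $SL_2$, $T'=\C^*$, so $\hat R_1\times T'$ is $2$-dimensional while $\hat R_0$ is $1$-dimensional; consequently $(\hat S/\hat R_1)\times(\hat T/T')$ has dimension $2$ whereas $Y=\hat G_0/\hat R_0$ has dimension $3$, and no enlargement of $\Gamma$ by a finite group can repair a dimension mismatch. Your supporting claim -- that the failure of a connected reductive subgroup of a direct product to split as the product of its projections is controlled by a finite group -- is true only when the abelianization of $\hat R_0$ is finite (e.g. $\hat R_0$ semisimple); in general the abelianization is a torus, and the graph of a surjection onto $T'$ is genuinely skew, not finitely close to a product.

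The paper handles this step differently and correctly: it sets $\hat R := \hat S \cap \hat R_0$ (the intersection, not the projection), lets $\hat T_0$ be the image of $\hat R_0$ in $\hat T$, chooses a complementary subtorus $T$ with $\hat T \simeq \hat T_0 \times T$, and observes that every coset $g\hat R_0$ has a representative of the form $st$ with $s\in\hat S$, $t\in T$; the map $g\hat R_0 \mapsto (s\hat R, t)$ is then checked to be a well-defined $\hat G_0$-equivariant bijection onto $(\hat S/\hat R)\times T$, giving $X_1=\hat S/\hat R$ and $X_2=T$. Note that with this choice of $\hat R$ reductivity is no longer automatic (it is not a homomorphic image of $\hat R_0$), and the paper obtains it from the Matsushima theorem: otherwise $\hat S/\hat R$, hence $\hX$, would fail to be affine. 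In the example above this construction yields $Y\simeq SL_2$ (with $\hat R$ and $T$ trivial), which your decomposition misses. So you would need to replace your splitting step by an argument of this kind.
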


\begin{proof} Extend $R$ to a maximal reductive subgroup. Since such a subgroup is unique up to conjugation \cite{Mo}
we can suppose that $R$ is contained in $G_0$.
By Mostow's theorem \cite{Mo} $G$ can be viewed as a semi-direct product of $G_0$
and $\cR_u$. Hence as an affine algebraic variety $X$ is isomorphic to $(G_0/R) \times \cR_u$.
Taking $\hat R_0$ as the connected identity component of the preimage of $R$ in $\hat G_0$
we get the desired $\hX = Y \times  \cR_u$ such that $Y=\hat G_0/\hat R_0$ and $X = (Y/\Gamma ) \times \cR_u =\hat X /\Gamma $
for some finite subgroup $\Gamma$ in the center of $\hat G_0$.

To show existence of an isomorphism $\varphi$ consider
the image $\hat T_0$ of $\hat R_0\subset \hat S \times \hat T$ in $\hat T$ under the natural projection. Let $T$ be another subtorus of $\hat T$ such that
$\hat T$ is naturally isomorphic to $\hat T_0 \times T$. Then any element $g \in \hat G_0$ can
be presented as $st_0t$ where $s \in \hat S, t_0 \in \hat T_0$, and $t \in T$. Note that
for any coset $g\hat R_0$ one can choose a representative in the form $st$. Let $\hat R = \hat S \cap \hat R_0$
and consider the map $\varphi : \hat G_0/\hat R_0 \to \hat S/\hat R \times T$ given by $gR \to (s\hat R, t)$.
One can see that this map is well-defined and bijective. Furthermore if we define the action of $g'=s't_0't' \in \hat G_0$ on $(s\hat  R,t)$
by formula $(s\hat R , t) \to (s's\hat R , t't)$ then $\varphi$ is $\hat G_0$-equivariant.
It remains to check that $\hat R$ is reductive which is a consequence of the Matsushima theorem
since otherwise $\hat S/\hat R$ (and therefore $\hX$) is not affine. This concludes the proof.

\end{proof}

\begin{notation}\label{nc.11.18.30}
Let $X=X_1\times X_2$ and $p_i : X \to X_i$ be the natural projection. Suppose that $\omega_i$ is the volume form
on $X_i$. Then we denote the volume form $p_1^*(\omega_1) \wedge p_2^*(\omega_2)$ by $\omega_1 \times \omega_2$.
In a more general setting let $\tau_i$ be a $k_i$-form on $X_i$. Then the $(k_1+k_2)$-form $p_1^*(\tau_1) \wedge p_2^*(\tau_2)$
will  be also denoted by $\tau_1 \times \tau_2$.
\end{notation}

\begin{remark}\label{nc.12.15.30}
It is worth mentioning that  if $X_1$ does not admit nonconstant invertible regular functions then 
any volume form $\omega$ on $X$ can be presented as   $\omega =\omega_1 \times \omega_2$
provided each of factors admits an algebraic volume form. Since tori and Euclidean spaces admit
such forms Proposition \ref{nc.08.02.10} from Appendix implies that
any algebraic volume form $\hat \omega$ on $\hat X = X_1 \times X_2 \times \cR_u$ from Lemma \ref{nc.07.28.10}
can be presented as $\omega_1 \times \omega_2 \times \omega_{\cR_u}$ where $\omega_{\cR_u}$
is an algebraic volume form on $\cR_u$.  The absence of nonconstant  regular  invertible  functions
on $X_1$ and $\cR_u$ implies that  $\omega_1$ and $\omega_{\cR_u}$ are unique up to constant factors,
i.e. $\hat \omega$ is determined by the choice of $\omega_2$.
\end{remark}

\begin{proposition}\label{nc.11.18.20} Let the assumptions of Lemma \ref{nc.07.28.10} hold, $\omega_2$ as before be an invariant volume form
on the torus $X_2 \simeq T$, and $\omega_Y=\omega_1 \times \omega_2$. Then Condition (B) holds for the pair $(Y , \omega_Y )$. Furthermore, the space
$H^{n-1} (Y , \C)$ (where $n$ is the dimension of $Y$) is generated by the image of $\Theta (\IVF_{\omega_Y} (Y, \Gamma ))$
under the De Rham homomorphism $\Phi_{n-1}$  where
$\IVF_{\omega_Y} (Y, \Gamma ) \subset \IVF_{\omega_Y} (Y)$ consists of $\Gamma$-invariant vector fields.
In particular, Condition (B) is valid for the triple $(Y,  \omega_Y , \Gamma )$.

\end{proposition}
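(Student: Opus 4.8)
The plan is to exhibit an explicit basis of $H^{n-1}(Y,\C)$ consisting of classes $\Phi_{n-1}(\Theta(\nu))$ for a handful of completely integrable $\Gamma$-invariant divergence-free vector fields $\nu$; this gives at once the strongest assertion (the ``Furthermore''), and the two instances of Condition (B) follow from it formally. By Lemma \ref{nc.07.28.10} write $Y=X_1\times X_2$ with $X_1=\hat S/\hat R$ ($\hat S$ semi-simple, $\hat R$ connected reductive) and $X_2\simeq T$ a torus; set $n_i=\dim X_i$, so $n=n_1+n_2$. By Remark \ref{nc.12.15.30} take $\omega_Y=\omega_1\times\omega_2$, with $\omega_1$ the (unique up to a constant) volume form on $X_1$ and $\omega_2$ the chosen invariant form on $X_2$; since $X_1$ has no nonconstant units and $\hat S$ is semi-simple, $\omega_1$ is in fact $\hat S$-invariant. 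The K\"unneth formula gives $H^{n-1}(Y,\C)=\bigoplus_{p+q=n-1}H^p(X_1,\C)\otimes H^q(X_2,\C)$, and since $X_1,X_2$ are affine of dimensions $n_1,n_2$ the only possibly nonzero summands are $(p,q)=(n_1-1,n_2)$ and $(n_1,n_2-1)$; the first vanishes because $H^{n_1-1}(X_1,\C)=0$ by Corollary \ref{nc.07.14.50} (recall $\hat S$ is semi-simple). Thus $H^{n-1}(Y,\C)\cong H^{n_1}(X_1,\C)\otimes H^{n_2-1}(X_2,\C)$.

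The one genuinely geometric ingredient, and the step I expect to be the main obstacle, is that $H^{n_1}(X_1,\C)$ is one-dimensional and spanned by $[\omega_1]$. For this I would invoke the Mostow decomposition (Theorem \ref{nc.07.14.40} and the proof of Corollary \ref{nc.07.14.50}): $X_1\simeq K_1\times_{L_1}\sqrt{-1}\,\mgoth_1$ retracts onto the zero section $K_1/L_1$, a compact connected orientable $n_1$-manifold sitting in $X_1$ as a maximal totally real submanifold. Poincar\'e duality on $K_1/L_1$ gives $\dim H^{n_1}(X_1,\C)=\dim H^{n_1}(K_1/L_1,\C)=1$. Moreover $\omega_1|_{K_1/L_1}$ is $K_1$-invariant (as $\omega_1$ is $\hat S$-invariant) and nowhere zero (the restriction of a holomorphic volume form at a point to a real form of the tangent space is nonzero), hence a nonzero constant multiple of an invariant volume form, so $\int_{K_1/L_1}\omega_1\neq 0$ and $[\omega_1]\neq 0$.

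Next, give $X_2\simeq(\C^*)^{n_2}$ coordinates $(z_1,\dots,z_{n_2})$ with $\omega_2=\frac{{\rm d}z_1}{z_1}\wedge\cdots\wedge\frac{{\rm d}z_{n_2}}{z_{n_2}}$ and put $\nu_i=z_i\,\partial /\partial z_i$, considered on $Y$ via $p_2$ so that $\nu_i$ acts trivially on $X_1$. Each $\nu_i$ is semi-simple (associated with a $\C^*$-action on $Y$), has $\omega_Y$-divergence zero because it preserves $\omega_1$ and $\omega_2$ separately, and is $\Gamma$-invariant: by the formula in Lemma \ref{nc.07.28.10} the central group $\Gamma\subset\hat G_0$ acts on $X_1\times X_2$ by product automorphisms whose $X_2$-component is a torus translation, which fixes the left-invariant field $\nu_i$; hence $\nu_i\in\IVF_{\omega_Y}(Y,\Gamma)$. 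Since $\nu_i$ lies along $X_2$ one computes $\Theta(\nu_i)=\iota_{\nu_i}\omega_Y=(-1)^{n_1}\,\omega_1\times\iota_{\nu_i}\omega_2$, whose de Rham class corresponds under K\"unneth to $(-1)^{n_1}[\omega_1]\otimes[\iota_{\nu_i}\omega_2]$. By the computation in the proof of Lemma \ref{nc.07.15.15} the classes $[\iota_{\nu_i}\omega_2]$, $1\le i\le n_2$, form a basis of $H^{n_2-1}(X_2,\C)$, so with the previous paragraph the classes $\Phi_{n-1}(\Theta(\nu_i))$ form a basis of $H^{n-1}(Y,\C)$. This proves that $\Phi_{n-1}(\Theta(\IVF_{\omega_Y}(Y,\Gamma)))$ generates $H^{n-1}(Y,\C)$.

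It remains to read off Condition (B). Since $\IVF_{\omega_Y}(Y,\Gamma)\subset\LieAO(Y)$ and $\Phi_{n-1}(\cZ_{n-1}(Y))=H^{n-1}(Y,\C)$ (every de Rham class on an affine variety is represented by a closed algebraic form), the previous paragraph gives $\Phi_{n-1}(\Theta(\LieAO(Y)))=H^{n-1}(Y,\C)=\Phi_{n-1}(\cZ_{n-1}(Y))$, i.e. Condition (B) for the pair $(Y,\omega_Y)$. Finally, each $\nu_i$ being $\Gamma$-invariant and divergence-free, $\iota_{\nu_i}\omega_Y$ is closed and $\chi_\omega$-quasi-invariant, so it lies in $\cZ_{n-1}(Y,\Gamma)$; hence $\Phi_{n-1}(\cZ_{n-1}(Y,\Gamma))=H^{n-1}(Y,\C)$ as well, and comparison with the above yields $\Phi_{n-1}(\Theta(\LieAO(Y)))=\Phi_{n-1}(\cZ_{n-1}(Y,\Gamma))$, which is Condition (B) for the triple $(Y,\omega_Y,\Gamma)$.
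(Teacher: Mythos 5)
Your proof is correct and follows essentially the same route as the paper: the K\"unneth reduction using $H^{n_1-1}(X_1,\C)=0$ (Corollary \ref{nc.07.14.50}), the semi-simple fields $z_i\,\partial/\partial z_i$ on the torus factor lifted to $Y$ as completely integrable divergence-free fields, and their $\Gamma$-invariance because $\Gamma$ acts on the torus component by translations. The only difference is that you make explicit why $[\omega_1]$ spans $H^{n_1}(X_1,\C)$ (by integrating over the compact real form $K_1/L_1$ coming from the Mostow decomposition), a point the paper leaves implicit in this proof and treats separately in the Appendix (Proposition \ref{nc.08.08.10}, Corollary \ref{nc.11.30.10}).
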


\begin{proof} By Lemma
 \ref{nc.07.15.15} and Corollary \ref{nc.07.14.50}  it suffices to consider the case
when each of the factors $X_1$ and $X_2$ is nontrivial.
Let $n_i$ be the dimension of $X_i$ and $n=n_1+n_2$. Then as we mentioned in the proof of Corollary \ref{nc.07.14.50}
$H^{n_1-1} (X_1, \C)=0$. Hence the K\"unneth formula implies that the space $H^{n-1}(X, \C )$ is generated by the images of
closed algebraic $(n-1)$-forms that can be presented as $\omega_1 \times \tau $ where $\tau$ is a closed algebraic
$(n_2-1)$-form on $X_2$. We saw in the proof of Lemma \ref{nc.07.15.15}
that such $\tau$ can be chosen in the form $\tau =\iota_\nu \omega_2$ where $\nu$ is a semi-simple vector field on the torus $T \simeq X_2$
associated with the multiplication by elements of a $\C^*$-subgroup $F$ of $T$. Furthermore, we saw that such $\nu$ is
of $\omega_2$-divergence zero. Hence  $H^{n-1} (Y , \C)$ is generated by $\Phi_{n-1}(\Theta (\IVF_{\omega_Y} (Y))$.
For the last statement it remains to note that since the actions of $\Gamma$ and $F$ (generated by multiplications)
commute such fields $\nu$ are $\Gamma$-invariant.
\end{proof}

Though we shall not use this fact later it is worth mentioning that Condition (B) is also valid for $\hX$ from Lemma \ref{nc.07.28.10}
by the following.

\begin{lemma}\label{nc.07.15.10} Suppose that $X=Y \times \C^k$ where $k \geq 1$
and  $\omega =\omega_Y \times \omega_{\C^k}$ be an algebraic volume form on $X$.
In the case of $k=1$ let the image of $\omega_Y$ under the De Rham homomorphism be a generator
of $H^{n-1}(Y, \C)$ where $n$ is the dimension of $X$ (by Corollary \ref{nc.11.30.10} in Appendix
this assumption is always true in the case when $Y$
is a connected homogeneous space without nonconstant invertible functions). Then Condition (B) is valid for the pair $(X , \omega )$.

\end{lemma}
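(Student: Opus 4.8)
The plan is to split on the value of $k$; the hypothesis on $\omega_Y$ is needed only when $k=1$. It is convenient to note first that, $\Gamma$ being trivial here, $\cZ_{n-1}(X,\Gamma)=\cZ_{n-1}(X)$, and, since algebraic de Rham cohomology computes the topological one, $\Phi_{n-1}(\cZ_{n-1}(X))=H^{n-1}(X,\C)$; thus Condition~(B) for $(X,\omega)$ just asserts $\Phi_{n-1}(\Theta(\LieAO(X)))=H^{n-1}(X,\C)$, and since the left-hand side is a linear subspace it suffices to produce one field $\xi\in\IVF_\omega(X)\subseteq\LieAO(X)$ whose class $\Phi_{n-1}(\Theta(\xi))$ generates $H^{n-1}(X,\C)$.

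First I would dispose of the case $k\ge2$, which is immediate: since $X=Y\times\C^k$ is homotopy equivalent to $Y$, a smooth affine variety of complex dimension $n-k\le n-2$, the Andreotti--Frankel theorem gives $H^{n-1}(X,\C)\cong H^{n-1}(Y,\C)=0$, and the required equality holds trivially.

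The substance is the case $k=1$. Write $t$ for the coordinate on the second factor; after rescaling we may assume $\omega_{\C}={\rm d}t$, so $\omega=\omega_Y\wedge{\rm d}t$ on $X=Y\times\C$. I would take $\xi=\partial/\partial t$, the infinitesimal generator of the $\C_+$-action translating the second factor; it is locally nilpotent, hence completely integrable. Since $\omega_Y$ is pulled back from $Y$ one has $\iota_\xi\omega_Y=0$, so $\iota_\xi\omega=\pm\,\omega_Y$, which is closed on $X$ because $\omega_Y$ is of top degree on $Y$ (so ${\rm d}\omega_Y=0$ already); hence $L_\xi\omega={\rm d}\iota_\xi\omega=0$, confirming $\xi\in\IVF_\omega(X)$ and $\Theta(\xi)=\pm\,\omega_Y$. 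Finally, the projection $p_1\colon X=Y\times\C\to Y$ is a homotopy equivalence and $\omega_Y$ on $X$ is $p_1^\ast\omega_Y$, so $p_1^\ast$ identifies $H^{n-1}(Y,\C)$ with $H^{n-1}(X,\C)$ and carries the class of $\omega_Y$ to that of $\omega_Y$; by hypothesis the former generates $H^{n-1}(Y,\C)$, so $\Phi_{n-1}(\Theta(\xi))=\pm[\omega_Y]$ generates $H^{n-1}(X,\C)$. This is Condition~(B).

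The argument has essentially no obstacle. The one point deserving care is that a \emph{single} completely integrable field suffices; this rests on the hypothesis that $[\omega_Y]$ is a generator of $H^{n-1}(Y,\C)$ — not merely a nonzero class — which in the intended application is precisely what the Appendix corollary cited in the statement supplies.
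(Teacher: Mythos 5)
Your proof is correct and follows essentially the same route as the paper: for $k\ge 2$ both arguments reduce to the vanishing of $H^{n-1}(X,\C)$ (you via Andreotti--Frankel and the homotopy equivalence $X\simeq Y$, the paper via K\"unneth), and for $k=1$ both use the locally nilpotent field $\partial/\partial t$ with $\iota_\xi\omega=\pm\,\omega_Y$ representing a generator of $H^{n-1}(X,\C)$. Your explicit check that $L_\xi\omega=0$ is a small addition the paper replaces by the remark that locally nilpotent fields are automatically divergence-free, but this is not a substantive difference.
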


\begin{proof}
By the K\"unneth formula $H^{n-1}(X, \C )=0$
as soon as $k\geq 2$ and Condition (B) holds. Let $k=1$ and $z$ be a coordinate on $\C^k$. Then by the K\"unneth
formula $\omega_1$ corresponds to a generator of $H^{n-1}(X, \C ) =\C$.
The derivative $\frac{\rm d}{{\rm d} z}$ induces a locally nilpotent derivation $\xi$ on $X$ which
is automatically divergence-free with respect to any volume form.
Note also that $\iota_\xi \omega =\omega_1$ which yields condition (B).
\end{proof}

\section{%Inclusion $\cB_{n-1} (X, \Gamma) \subset \Theta (\LieAO (X, \Gamma ))$
 Condition (A)  for homogeneous spaces}

 In order to apply Theorem \ref{nc.07.14.10} one needs to check  Conditions (A) and  (B). 
%one needs to show that  $\Theta (\LieAO (X, \Gamma ))$ contains $\cB_{n-1} (X, \Gamma)$ as in Proposition \ref{nc.07.13.30},
%e.g., to check      Condition (A). 
For homogeneous spaces of semi-simple groups  Condition (A)  follows from the next lemma
and Proposition \ref{nc.11.16.20}. 

\begin{lemma}\label{nc.08.10.10} Let $X_1$ be a homogeneous space of a semi-simple group $\hat S$, $\Gamma_1$ be a finite
subgroup of the center of $\hat S$, and    $v_1,  v_2 $ be non-collinear vectors in the space $T_{x_1}X_1$
at some point $x_1 \in X_1$.
Suppose that $\cN_0$ is the set of locally nilpotent vector fields on $X_1$ associated with multiplications by $\C_+$-subgroups of $\hat S$
and  that $H$ is  the group of algebraic automorphisms of $X$ generated by elements of phase flows associated with elements from
the set $\cN$ of all locally nilpotent vector fields of form $f\xi$ where $\xi \in \cN_0$ and the function $f \in \Ker \xi$ is $\Gamma_1$-invariant.

Then the orbit $O$ of $v_1 \wedge v_2$ under the action of the isotropy group $H_{x_1}$ generates the whole wedge-product space $V= \Lambda^2 T_{x_1} X_1$.
\end{lemma}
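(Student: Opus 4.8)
The goal is to prove $W:=\Span(O)=V$, where $V:=\Lambda^2 T_{x_1}X_1$; note first that $W$ is invariant under the linear action of $H_{x_1}$ on $V$. If $\dim X_1\le 2$ then $\dim V\le 1$ and $W=V$ since $v_1\wedge v_2\ne 0$, so assume $n:=\dim X_1\ge 3$. I would begin with two harmless reductions. Since $\Gamma_1$ is finite and central in $\hat S$, it acts on $X_1=\hat S/\hat R$ with kernel $\Gamma_1\cap\hat R$ (a central element fixes $g\hat R$ iff it lies in $\hat R$), and after discarding that kernel the action is free; the fields $f\xi\in\cN$ are precisely the pullbacks of the corresponding fields on the again affine $\hat S$-homogeneous quotient $X_1/\Gamma_1=\hat S/(\Gamma_1\hat R)$, and $T_{x_1}X_1$ is identified with the tangent space of $X_1/\Gamma_1$ at the image of $x_1$, so one may assume $\Gamma_1$ is trivial. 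Second, since $\hat S$ is semisimple its Lie algebra is spanned by nilpotent elements, each generating a $\C_+$-subgroup, and $\hat S$ acts transitively on $X_1$; hence $\cN_0\ni\xi\mapsto\xi(x_1)$ has image the union of $\{0\}$ with a set $E$ that spans $T_{x_1}X_1$, and this image is the image of the (irreducible) nilpotent cone of $\mathrm{Lie}(\hat S)$ under a linear surjection, so its Zariski closure is an irreducible cone spanning $T_{x_1}X_1$; in particular $E$ is not contained in any finite union of proper linear subspaces.

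Next I identify the tangent-level operators coming from $H_{x_1}$. Fix $\xi\in\cN_0$ with $e:=\xi(x_1)\ne 0$ and let $f\in\Ker\xi$ satisfy $f(x_1)=0$; then the locally nilpotent field $f\xi\in\cN$ vanishes at $x_1$, so its phase flow $\{\exp(tf\xi)\}_{t\in\C}$ lies in $H_{x_1}$, and one computes that the differential of $\exp(tf\xi)$ at $x_1$ is $\mathrm{id}+tN$ with $N=e\otimes{\rm d}f_{x_1}$ of rank $\le 1$, nilpotent because ${\rm d}f_{x_1}(e)=(\xi f)(x_1)=0$. As $N$ has rank $\le 1$, its induced action on $V$ is exactly $\mathrm{id}_V+t\,\cL_N$, where $\cL_N$ is the derivation $x\wedge y\mapsto Nx\wedge y+x\wedge Ny$; hence invariance of $W$ under all these operators forces $\cL_N(W)\subseteq W$. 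So $W$ is stable under $\cL_{e\otimes\lambda}$ for every $e\in E$ and every realizable covector $\lambda$, i.e. every $\lambda={\rm d}f_{x_1}$ with $f\in\Ker\xi$, $f(x_1)=0$, $\xi(x_1)=e$.

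The heart of the matter is the claim that for each $\xi\in\cN_0$ with $e:=\xi(x_1)\ne 0$ the realizable covectors span the annihilator $e^{\perp}=\{\lambda\in T^*_{x_1}X_1:\lambda(e)=0\}$ — equivalently, $\Ker\xi$ contains regular functions whose differentials at $x_1$ exhaust the conormal direction of the one-dimensional $\C_+$-orbit through $x_1$. I would prove this via representation theory: writing $X_1=\hat S/\hat R$, $x_1=e\hat R$, $\xi\leftrightarrow X\in\mathrm{Lie}(\hat S)$, $U=\exp(\C X)$, one has $\Ker\xi=\C[X_1]^{U}$; by the algebraic Peter--Weyl theorem $\C[X_1]$ is spanned by matrix coefficients $g\hat R\mapsto\langle w^{*},\rho(g)w\rangle$ of finite-dimensional irreducible $\hat S$-modules $\rho$ with $w$ an $\hat R$-fixed vector, such a coefficient lies in $\Ker\xi$ precisely when $w^{*}$ is $U$-fixed, and its differential at $x_1$ is the functional $\bar Y\mapsto\langle w^{*},\rho_{*}(Y)w\rangle$ on $\mathrm{Lie}(\hat S)/\mathrm{Lie}(\hat R)=T_{x_1}X_1$. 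That these functionals fill out $e^{\perp}$ amounts, via the Killing form, to the assertion that a $Y_0\in\mathrm{Lie}(\hat S)$ annihilated by all of them must lie in $\mathrm{Lie}(\hat R)+\C X$; one establishes this by a highest-weight argument combined with the Kostant--Rosenlicht theorem, which guarantees that the orbit $Ux_1\cong\aff^{1}$ is closed, so that the local slice construction for the $\C_+$-action supplies enough global regular invariants near $x_1$. I expect this invariant-theoretic step to be the main obstacle.

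Granting the claim, the proof concludes by spreading $v_1\wedge v_2$ through $V$ using the identity $\cL_{e\otimes\lambda}(x\wedge y)=e\wedge\big(\lambda(x)\,y-\lambda(y)\,x\big)$, valid for all $e,\lambda$. Since $E$ meets the complement of any finite union of proper subspaces, pick $e\in E\setminus\Span(v_1,v_2)$; then $e,v_1,v_2$ are linearly independent, so there is $\lambda\in e^{\perp}$ with $\lambda(v_1)=0$, $\lambda(v_2)=1$, and the identity gives $e\wedge v_1\in W$, and symmetrically $e\wedge v_2\in W$. Now choose a basis $f_1,\ldots,f_n$ of $T_{x_1}X_1$ lying in the Zariski-dense set $E\setminus\Span(v_1,v_2)$. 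For $i\ne j$ at least one of $v_1,v_2$ — call it $v$ — lies outside $\Span(f_i,f_j)$ (otherwise $\Span(v_1,v_2)=\Span(f_i,f_j)\ni f_i$, contradicting $f_i\notin\Span(v_1,v_2)$); since $f_j,f_i,v$ are then independent, there is $\lambda\in f_j^{\perp}$ with $\lambda(f_i)=0$, $\lambda(v)=1$, and applying the identity with $e=f_j$ and $x\wedge y=f_i\wedge v\in W$ yields $f_i\wedge f_j\in W$. Hence $W\supseteq\Span\{f_i\wedge f_j:i<j\}=\Lambda^{2}T_{x_1}X_1$, so $O$ generates $\Lambda^{2}T_{x_1}X_1$, as asserted. (The isotropy representation of $\hat S_{x_1}=\hat R$ on $T_{x_1}X_1$ is also contained in the $H_{x_1}$-action, but this argument does not use it.)
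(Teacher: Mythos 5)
Your setup (the differential of the time-$t$ flow of $f\xi$ at $x_1$ is $\mathrm{id}+t\,e\otimes {\rm d}f_{x_1}$, hence $\Span(O)$ is stable under the derivations $\cL_{e\otimes\lambda}$, followed by the wedge combinatorics) is sound and parallels the paper's formula (3). But the proof has a genuine gap exactly where you flag it: the claim that for each $\xi\in\cN_0$ with $e=\xi(x_1)\neq 0$ the covectors ${\rm d}f_{x_1}$, $f\in\Ker\xi$, $f(x_1)=0$, span all of $e^{\perp}$ is never proved — it is only sketched via Peter--Weyl, a highest-weight argument and Kostant--Rosenlicht, and you yourself call it ``the main obstacle.'' This is not a routine verification: it amounts to saying that the invariant-theoretic quotient map of the $\C_+$-action has differential of full rank $n-1$ at the \emph{given, arbitrary} point $x_1$, for \emph{every} $\xi$ not vanishing there (you need it at least for each $e=f_j$ of your chosen basis). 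Closedness of the orbit $Ux_1$ does not give this; the rank of the map defined by $\Ker\xi$ can drop along special orbits of a fixed $\C_+$-action even where $\xi\neq 0$, so the pointwise statement you rely on is stronger than anything the lemma actually requires and may be false as stated. Without it, your argument produces no element of $\Lambda^2 T_{x_1}X_1$ beyond the orbit of $v_1\wedge v_2$ itself.

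The paper avoids ever needing such a pointwise spanning statement. It first observes that the values $\{\xi(x)\mid \xi\in\cN_0\}$ span $T_xX_1$ at every $x$ (nilpotent elements span the semisimple Lie algebra), so $X_1$ is $H$-flexible and $H$ acts transitively; since $O$ is an $H_{x_1}$-orbit, one may replace $x_1$ by $h(x_1)$ and assume $x_1$ is a \emph{general} point. Then, for $\nu=h_*(\nu_0)$ and the quotient morphism $\rho:X_1\to\Spec\Ker\nu$, only the generic fact is needed that at a general point $\rho_*$ kills exactly the orbit direction, so a function $f\in\Ker\nu$ (made $\Gamma_1$-invariant by averaging on the quotient, using that the $H$- and $\Gamma_1$-actions commute) can be chosen with ${\rm d}f(v_1)\neq 0$, ${\rm d}f(v_2)=0$; generality of the directions $u,v$ is then supplied by \cite{AFKKZ}[Corollary 4.3]. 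If you want to salvage your route, you should either import this move-to-a-general-point reduction (which converts your pointwise claim into a statement about general orbits, provable by Rosenlicht-type arguments) or actually carry out the representation-theoretic spanning argument; as written, the central step is missing.
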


\begin{proof} Let $\nu$ be a complete algebraic vector field and $f \in \Ker \nu$ be a function such that
$f(x_1)=0$. Then the phase flow $\varphi_t$ associated with $f\nu$ generates an isomorphism   $T_{x_1}X_1 \to T_{x_1}X_1$
given by the formula $$w \to w +t {\rm d} f (w) v \leqno{(3)}$$ where $v= \nu (x_1)$. Note that each $\xi \in \cN_0$ corresponds
a nilpotent element of the Lie algebra of $\hat S$.  Hence the set $\{ \xi (x) | \xi \in \cN_0 \}$ generates $T_{x}X_1$ for every $x \in X_1$ which implies that
$X_1$ is an $H$-flexible variety in terminology of \cite{AFKKZ}. That is, $H$ acts
transitively on $X_1$. In particular replacing if necessary $x_1$ by $h(x_1)$ for some $h \in H$ we can
treat $x_1$ as a general point of $X_1$.
Assume that $\nu = h_* (\nu_0)$ for some $\nu_0 \in \cN_0$. Since it is enough to consider the case of
$\dim X_1 \geq 3$, we can suppose that $v_2$ is not a linear combination of $v_1$ and $v=\nu (x_1)$.  This means that if
$\rho : X_1 \to Q ={\rm Spec} \Ker \nu$ is the quotient morphism then $u_1=\rho_* (v_1)$ and $u_2=\rho_*(v_2)$ are not collinear, i.e. for
a general regular function $f$ on $Q$ one has ${\rm d} f (u_1) \ne 0$ and  ${\rm d} f (u_2) = 0$.  Furthermore, by construction the $H$-action
on $X_1$ commutes with the $\Gamma_1$-action which yields a $\Gamma_1$-action on $Q$. Thus we can choose $f$ to be $\Gamma_1$-invariant, i.e.
$f \nu \in \cN$. Treating $f$ as a function on $X_1$
we get  ${\rm d} f (v_1) \ne 0$ and  ${\rm d} f (v_2) = 0$. In combination with the formula (3) this implies that the span of $O$ contains
the wedge product $v_1 \wedge v$.  Now choose another locally nilpotent derivation for which the value $u$ at $x_1$
is not a linear combination of $v_1$ and $v$. Repeating the argument as before we see that the space of $O$ contains $u \wedge v$.
By \cite{AFKKZ}[Corollary 4.3] $u$ and $v$ can be chosen sufficiently general (more precisely, the set $\{h_*( \nu )(x_1) | \nu \in \cN_0 , h \in H \}$ coincides with $T_{x_1}X_1$).
Hence  we get the desired conclusion.

\end{proof}

\begin{remark}\label{nc.01.13.10}    Let ${\rm SAut} (X)$ be the subgroup of the group $\Aut (X)$ of algebraic automorphisms of $X$ generated
by elements of all algebraic one-parameter unipotent subgroups of $\Aut (X)$. Recall that according to one of equivalent
definitions \cite{AFKKZ} a normal affine
algebraic variety $X$ is flexible\footnote{The class of flexible varieties includes homogeneous spaces of extensions of semi-simple groups by
unipotent radicals, non-degenerate toric varieties, cones over flag varieties and del Pezzo surfaces (of degree at least 4), hypersurfaces of form $\{ uv=p (\bar x ) \} \subset \C^{n+2}_{u,v, \bar x}$,
homogeneous Gizatullin surfaces (except for $\C^* \times \C$), etc..}
if ${\rm SAut} (X)$ acts transitively on the smooth part of $X$.
In this terminology one can have the following straightforward extension of Lemma \ref{nc.08.10.10}: for every smooth point $x \in X$ the isotropy
group $({\rm SAut } (X) )_x$ induces an irreducible action on $\Lambda^2 TX$.
In the presence of a semi-compatible pair this yields   Condition (A).
Thus by Theorem \ref{nc.07.14.10} we have the following fact which will not be used later.
\end{remark}

\begin{theorem}\label{nc.01.13.20} Let $X$ be a smooth flexible variety equipped with an algebraic volume form $\omega$
such that $H^{n-1}(X, \C)=0$ where $n=\dim X$. Suppose that $X$ admits a semi-compatible pair of divergence-free
vector fields. Then $X$ has AVDP.

\end{theorem}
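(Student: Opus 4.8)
The plan is to verify the hypotheses of Theorem~\ref{nc.07.14.10} with the trivial group $\Gamma$, so that the conclusion $\LieAO(X)=\AVF_\omega(X)$ — which is precisely AVDP — follows directly. There are two things to check: the analogue of the assumptions of Proposition~\ref{nc.07.13.30} (in particular, strong Condition (A), giving $\cB_{n-1}(X)\subset\Theta(\LieAO(X))$), and Condition (B). Since $H^{n-1}(X,\C)=0$ by hypothesis, Condition (B) is automatic: the De Rham homomorphism $\Phi_{n-1}$ is the zero map, so both sides of the equality in (B) are the zero subspace. Likewise, with $H^{n-1}(X,\C)=0$ every element of $\cZ_{n-1}(X)$ is exact, so $\cB_{n-1}(X)=\cZ_{n-1}(X)$ and the inclusion $\cB_{n-1}(X)\subset\Theta(\LieAO(X))$ will already force $\Theta(\LieAO(X))=\cZ_{n-1}(X)$. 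Hence everything reduces to establishing strong Condition (A) from a single semi-compatible pair of divergence-free vector fields.

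The key step is therefore the following: given one semi-compatible pair $(\xi_1,\eta_1)$ of divergence-free vector fields on the flexible variety $X$, produce a finite family $(\xi_j,\eta_j)_{j=1}^k$ of semi-compatible divergence-free pairs whose wedge products $\xi_j(x)\wedge\eta_j(x)$, weighted by the associate ideals, generate $\Lambda^2 T_xX$ for every $x$. For this I would invoke the extension of Lemma~\ref{nc.08.10.10} recorded in Remark~\ref{nc.01.13.10}: since $X$ is flexible, for every smooth point $x$ the isotropy group $(\mathrm{SAut}(X))_x$ acts irreducibly on $\Lambda^2 T_xX$, so the orbit of $\xi_1(x_0)\wedge\eta_1(x_0)$ under $\mathrm{SAut}(X)$ spans $\Lambda^2 T_{x_0}X$ at a general point $x_0$. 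Every $\alpha\in\mathrm{SAut}(X)$ preserves $\omega$ up to a constant (a one-parameter unipotent subgroup has a locally nilpotent generator, which is automatically $\omega$-divergence-free; so its flow preserves $\omega$ exactly, and composites of such preserve $\omega$ exactly), and conjugation by $\alpha$ carries a semi-compatible pair to a semi-compatible pair and completely integrable fields to completely integrable fields. Thus the pairs $(\alpha_*\xi_1,\alpha_*\eta_1)$, $\alpha\in\mathrm{SAut}(X)$, are again semi-compatible and divergence-free, and finitely many of them already span $\Lambda^2 T_{x_0}X$ at the general point $x_0$; this is exactly the setup of Corollary~\ref{nc.01.01.10} with $F=\mathrm{SAut}(X)$ (transitive on $X$ by flexibility) and $V=\Lambda^2 TX$, whose conclusion gives $\cB_{n-1}(X)=D(\cC_{n-2}(X))\subset\Theta(\LieAO(X))$.

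Putting the pieces together: strong Condition (A) holds, so by Proposition~\ref{nc.07.13.30} (or Corollary~\ref{nc.01.01.10}) we get $\cB_{n-1}(X)\subset\Theta(\LieAO(X))\subset\cZ_{n-1}(X)$; since $H^{n-1}(X,\C)=0$ gives $\cB_{n-1}(X)=\cZ_{n-1}(X)$, all three spaces coincide, so $\Theta(\LieAO(X))=\cZ_{n-1}(X)$ and, applying $\Theta^{-1}$, $\LieAO(X)=\AVF_\omega(X)$, i.e.\ $X$ has AVDP. The main obstacle — and the real content — is the flexibility-to-irreducibility step behind Remark~\ref{nc.01.13.10}: one must know that the isotropy action on $\Lambda^2 TX$ is irreducible, which rests on the refined information from \cite{AFKKZ}[Corollary~4.3] about the position of the vectors $h_*\nu(x)$, used exactly as in the proof of Lemma~\ref{nc.08.10.10}. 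Everything else is a routine matter of quoting the criterion and of the cohomological vanishing collapsing $\cB_{n-1}$ onto $\cZ_{n-1}$.
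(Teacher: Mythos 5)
Your proposal is correct and follows essentially the same route as the paper: the theorem is obtained exactly as in Remark \ref{nc.01.13.10}, namely flexibility gives the irreducibility of the isotropy action on $\Lambda^2 TX$ (the extension of Lemma \ref{nc.08.10.10} via \cite{AFKKZ}), which together with the given semi-compatible divergence-free pair yields strong Condition (A) (your use of Corollary \ref{nc.01.01.10} with $F={\rm SAut}(X)$ is just the same spreading-out step), while $H^{n-1}(X,\C)=0$ makes Condition (B) automatic and collapses $\cZ_{n-1}(X)$ onto $\cB_{n-1}(X)$, so Theorem \ref{nc.07.14.10} gives AVDP.
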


Taking into consideration Example \ref{nc.11.16.00} we have the following.

\begin{corollary}\label{nc.01.13.30} Let $X$ be a smooth flexible variety equipped with an algebraic volume form $\omega$
such that $H^{n-1}(X, \C)=0$ where $n=\dim X$. Suppose that either $X$ admits a fixed point free non-degenerate
algebraic $SL_2$-action or $X=X_1 \times X_2$ where $X_i$ is a flexible variety of dimension at least 1
equipped with a volume form $\omega_i$ such that $\omega =\omega_1 \times \omega_2$. Then $X$ has AVDP.

\end{corollary}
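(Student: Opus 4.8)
The plan is to read this off Theorem~\ref{nc.01.13.20}. That theorem asks for three things: $X$ smooth and flexible, $H^{n-1}(X,\C)=0$, and the existence of a semi-compatible pair of divergence-free algebraic vector fields on $X$. The first two hypotheses are already granted, so the only work is to produce, in each of the two alternative situations, a semi-compatible pair $(\xi,\eta)$ with $\diver_\omega\xi=\diver_\omega\eta=0$. I would first record the elementary fact used throughout: a locally nilpotent vector field $\xi$ is automatically of $\omega$-divergence zero for every algebraic volume form $\omega$. Indeed the flow $\varphi_t$ of $\xi$ is a $\C_+$-action, so $\varphi_t^*\omega=h_t\,\omega$ with $h_t\in\C[X]^*$; for each fixed $x$ the map $t\mapsto h_t(x)$ is an invertible regular function on $\C$, hence the constant $1$, so $\varphi_t^*\omega=\omega$ and $L_\xi\omega=0$. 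This is exactly the remark already invoked in the footnote to Corollary~\ref{nc.07.13.20} and in the proof of Lemma~\ref{nc.07.15.10}.

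For the first alternative, suppose $X$ carries a fixed point free non-degenerate algebraic $SL_2$-action. Let $U$ and $L$ be the unipotent subgroups of upper and lower triangular matrices as in Example~\ref{nc.11.16.00}(2), and let $\xi,\eta$ be the locally nilpotent vector fields on $X$ associated with the induced $U$- and $L$-actions. By Proposition~\ref{nc.11.16.10} the pair $(\xi,\eta)$ is semi-compatible, and by the fact just recalled both fields have $\omega$-divergence zero. Then Theorem~\ref{nc.01.13.20} applies verbatim and $X$ has AVDP.

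For the second alternative, write $X=X_1\times X_2$ with each $X_i$ flexible of dimension at least $1$ and $\omega=\omega_1\times\omega_2$. Since $X_i$ is flexible of positive dimension, ${\rm SAut}(X_i)$ acts transitively on $X_i$ and in particular $X_i$ carries a nontrivial locally nilpotent vector field $\delta_i$. Let $\xi_i$ be the vector field on $X$ obtained by letting $\delta_i$ act on the $i$-th factor and trivially on the other, so that $(p_j)_*\xi_i=0$ for $i\neq j$; by Example~\ref{nc.11.16.00}(1) (with trivial groups) the pair $(\xi_1,\xi_2)$ is semi-compatible, with associate ideal $\C[X]$. Each $\xi_i$ is again locally nilpotent on $X$, its flow being the product of $\exp(t\delta_i)$ on $X_i$ with the identity on the other factor, hence of $\omega$-divergence zero with respect to $\omega=\omega_1\times\omega_2$. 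Applying Theorem~\ref{nc.01.13.20} once more yields AVDP for $X$.

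I do not expect a genuine obstacle: the corollary is a repackaging of Proposition~\ref{nc.11.16.10}, Example~\ref{nc.11.16.00}(1) and Theorem~\ref{nc.01.13.20}. The only two points worth a sentence of justification are the ones flagged above — that a locally nilpotent vector field is automatically divergence-free for any algebraic volume form, and that extending a locally nilpotent field on one factor of a product by zero on the complementary factor keeps it locally nilpotent — and both are immediate.
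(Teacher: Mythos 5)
Your proposal is correct and follows exactly the route the paper intends: the paper derives this corollary from Theorem \ref{nc.01.13.20} by invoking Proposition \ref{nc.11.16.10} for the $SL_2$ case and Example \ref{nc.11.16.00}(1) for the product case, with the divergence-freeness of locally nilpotent fields supplied by the same remark you cite. The extra details you spell out (the constancy of $t\mapsto h_t(x)$ and the local nilpotency of the extended fields on the product) are exactly the points the paper leaves implicit, and they are handled correctly.
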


\begin{notation}\label{nc.11.21.30}
In the rest of this section we consider a situation when  $X$
is isomorphic to the direct product $X_1 \times X_2$ of $n_i$-dimensional   $\Gamma_i$-varieties  $X_i$ equipped with  multi-volume forms $\omega_i$
(in the sense of Definition \ref{nc.11.20.10}(3)) 
 such that $\chi_{\omega_i}$ is a character for $i=1,2$.  We suppose also that these $\Gamma_i$-actions are {\bf free}.  
Then we 
equip $X$ with the  multi-volume  form $\omega =\omega_1 \times \omega_2$ and treat it as an $n$-dimensional
 $\Gamma$-variety where $\Gamma$ is a subgroup
of $\Gamma_1\times \Gamma_2$ acting naturally on $X$.
For each point $\bar x =(x_1,x_2) \in X$ we have the natural embedding $T_{x_i} X_i \hookrightarrow T_{\bar x} X$ such that
$T_{\bar x}X \simeq T_{x_1} X_1 \oplus T_{x_2}X_2$ which enables us to treat $T_{x_i}X_i$ (resp.  $\Lambda^kT_{x_i}X_i$) as a subspace of $T_{\bar x} X$
(resp. $\Lambda^k T_{\bar x} X$).  We 
denote by $V$ the smallest subbundle of $\Lambda^2TX$ that contains all wedge-products of form $v_1 \wedge v_2$ where
$v_i \in T_{x_i}X_i \subset T_{\bar x}X$ with $\bar x =(x_1,x_2)$ running over $X$.
Furthermore, for every
vector field $\xi$ on $X_i$ the embedding  $T_{x_i} X_i \hookrightarrow T_{\bar x} X$ yields
an induced vector field on $X$ which will be denoted by $\xi'$. We treat also $\C [X_i]$ as a natural subring of $\C [X]$.
\end{notation}

\begin{proposition}\label{nc.11.25.10}
Let Notation \ref{nc.11.21.30} hold, each $X_i$  has $\Gamma_i$-AVDP with respect to $\omega_i$, and let $\Gamma = \Gamma_1 \times \Gamma_2$. Then $X$ has $\Gamma$-AVDP with respect to $\omega$.
\end{proposition}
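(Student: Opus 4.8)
The plan is to establish the equality $\Theta(\LieAO(X,\Gamma))=\cZ_{n-1}(X,\Gamma)$ directly, which is equivalent to the asserted $\Gamma$-AVDP; here $\Gamma=\Gamma_1\times\Gamma_2$, $n=n_1+n_2$, $\Theta_i$ is the isomorphism of Notation~\ref{nc.11.16.30} for $X_i$, and I use the obvious bidegree grading on algebraic forms on $X=X_1\times X_2$, so that an algebraic $(n-1)$-form splits as $\tau=\tau^{(n_1,n_2-1)}+\tau^{(n_1-1,n_2)}$ and ${\rm d}={\rm d}_1+{\rm d}_2$ with ${\rm d}_i$ differentiating in the $X_i$-directions. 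The first, cheap, step uses only the $\Gamma_i$-AVDP of the factors together with the induced-vector-field map $\xi\mapsto\xi'$ of Notation~\ref{nc.11.21.30}: this is a Lie-algebra homomorphism carrying $\IVF_{\omega_i}(X_i,\Gamma_i)$ into $\IVF_\omega(X,\Gamma)$, and moreover if $\alpha_1\in\LieAO(X_1,\Gamma_1)=\AVF_{\omega_1}(X_1,\Gamma_1)$ and $f\in\C[X_2,\Gamma_2]$ then $f\alpha_1'\in\LieAO(X,\Gamma)$ as well: since $f$ kills every field induced from $X_1$, in a bracket expression for $\alpha_1'$ the scalar $f$ can be absorbed into one of the completely integrable generators, which it leaves completely integrable (and $\Gamma$-invariant and divergence-free). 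As $\iota_{f\alpha_1'}\omega=\Theta_1(\alpha_1)\times(f\omega_2)$, this puts into $\Theta(\LieAO(X,\Gamma))$ the linear span $M_1$ of $\{\Theta_1(\alpha_1)\times(f\omega_2)\}$, which — running over $\alpha_1$ and $f$ and using the $\Gamma_1\times\Gamma_2$-isotypic decomposition — is precisely the space of $\Gamma$-quasi-invariant closed forms of pure bidegree $(n_1-1,n_2)$; symmetrically one gets the corresponding space $M_2$ of pure bidegree $(n_1,n_2-1)$.

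The second step produces the mixed bidegree through semi-compatibility. For completely integrable $\xi_i\in\IVF_{\omega_i}(X_i,\Gamma_i)$ (semisimple ones, e.g.\ the torus fields of Lemma~\ref{nc.07.15.15}, suffice on factors with no locally nilpotent fields) the induced fields $\xi_1',\xi_2'$ commute, are $\Gamma$-invariant of $\omega$-divergence zero, and by Example~\ref{nc.11.16.00}(1) the pair $(\xi_1',\xi_2')$ is $\Gamma$-semi-compatible with associate ideal all of $\C[X,\Gamma]$; moreover $\iota_{\xi_1'}\iota_{\xi_2'}\omega=\pm(\iota_{\xi_1}\omega_1)\times(\iota_{\xi_2}\omega_2)$ has bidegree $(n_1-1,n_2-1)$. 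For $\Gamma$-invariant $f\in\Ker\xi_1'$, $g\in\Ker\xi_2'$ one has $f\xi_1',g\xi_2'\in\IVF_\omega(X,\Gamma)$, hence $[f\xi_1',g\xi_2']\in\LieAO(X,\Gamma)$, and $\Theta([f\xi_1',g\xi_2'])={\rm d}\big(fg\,\iota_{\xi_1'}\iota_{\xi_2'}\omega\big)$ by Proposition~\ref{bracket}; exactly as in the proof of Corollary~\ref{nc.07.13.20} (with the character bookkeeping of Example~\ref{nc.11.16.00}(1) keeping everything regular and $\Gamma$-compatible) one deduces that $D^{-1}\circ\Theta(\LieAO(X,\Gamma))$ contains the $\C[X,\Gamma]$-module generated by all $(\iota_{\xi_1}\omega_1)\times(\iota_{\xi_2}\omega_2)$. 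Using the Nakayama-type Lemma~\ref{nc.12.27.10} this module is identified with $\cC_{n-2}^V(X,\Gamma)$, the space of $\Gamma$-quasi-invariant forms of bidegree $(n_1-1,n_2-1)$, where $V$ is the subbundle of $\Lambda^2TX$ of Notation~\ref{nc.11.21.30}; thus $D(\cC_{n-2}^V(X,\Gamma))\subseteq\Theta(\LieAO(X,\Gamma))$, and via ${\rm d}={\rm d}_1+{\rm d}_2$ this supplies the exact forms coupling the two pure bidegrees. Note that $V\subsetneq\Lambda^2TX$, so the strong Condition~(A) does not hold and Theorem~\ref{nc.07.14.10} cannot simply be quoted.

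The third step is the synthesis. Given a closed $\Gamma$-quasi-invariant $\tau=\tau_a+\tau_b$ (bidegrees $(n_1,n_2-1)$ and $(n_1-1,n_2)$) the closedness condition reads ${\rm d}_2\tau_a+{\rm d}_1\tau_b=0$; I would choose $\beta\in\cC_{n-2}^V(X,\Gamma)$ with ${\rm d}_1\big({\rm d}_2\beta-\tau_b\big)=0$, which is possible because ${\rm d}_1\tau_b$ lies in $\cB_{n_1}(X_1)\otimes\cB_{n_2}(X_2)$ (it equals $-{\rm d}_2\tau_a$, hence is exact in each variable separately) while ${\rm d}_1{\rm d}_2\beta$ ranges over exactly that space as $\beta$ varies. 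Then $\tau=\big(\tau_a-{\rm d}_1\beta\big)+\big(\tau_b-{\rm d}_2\beta\big)+{\rm d}\beta$, where the first two summands are closed of pure bidegree $(n_1,n_2-1)$, resp.\ $(n_1-1,n_2)$, hence lie in $M_2$, resp.\ $M_1$, and ${\rm d}\beta\in D(\cC_{n-2}^V(X,\Gamma))$. Therefore $\tau\in M_1+M_2+D(\cC_{n-2}^V(X,\Gamma))\subseteq\Theta(\LieAO(X,\Gamma))$, giving $\Theta(\LieAO(X,\Gamma))=\cZ_{n-1}(X,\Gamma)$ and hence $\LieAO(X,\Gamma)=\AVF_\omega(X,\Gamma)$. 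Incidentally, Condition~(B) of Theorem~\ref{nc.07.14.10} is automatic here, being subsumed by the richness of $M_1,M_2$, which already carry all of $H^{n-1}(X,\C)$ through the K\"unneth decomposition.

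The main obstacle, I expect, is the identification in the second step of the module generated by the mixed pairs with the full $\cC_{n-2}^V(X,\Gamma)$: this reduces to the fibrewise statement that the wedge-products $\xi_1(x_1)\wedge\xi_2(x_2)$ of values of completely integrable $\Gamma_i$-invariant divergence-free fields span $V_{\bar x}=T_{x_1}X_1\otimes T_{x_2}X_2$ at every point, which in turn requires knowing that the complete fields on each $X_i$ already span $T_{x_i}X_i$ (for homogeneous factors this follows by transitivity from a single point, in the spirit of Lemma~\ref{nc.08.10.10}), and then feeding this into Lemma~\ref{nc.12.27.10}. The other delicate point is the diagram chase producing $\beta$ in the third step, i.e.\ a careful use of the K\"unneth structure of the algebraic de Rham complex of $X_1\times X_2$ together with exactness of tensor products over $\C$.
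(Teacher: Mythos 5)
Your overall architecture (bidegree splitting of closed $(n-1)$-forms, absorbing a function pulled back from the other factor into one generator of a bracket, mixed brackets $[f\xi_1',g\xi_2']$ handled through semi-compatibility as in Example \ref{nc.11.16.00}(1) and Lemma \ref{nc.12.27.10}, then a homological correction term $\beta$) is exactly the toolkit this paper uses elsewhere (Lemmas \ref{nc.01.01.20}, \ref{nc.12.27.20}, \ref{nc.11.20.20a}); note, however, that the paper gives no written proof of this proposition at all — it only cites \cite{KaKu4}[Proposition 4.3] and asserts the equivariant adjustments are straightforward. Steps 1 and 3 of your argument are sound: the absorption lemma does put $f\alpha_1'$ into $\LieAO(X,\Gamma)$, the identification of $M_1,M_2$ with the quasi-invariant closed forms of pure bidegree uses correctly that $\Gamma=\Gamma_1\times\Gamma_2$ is the full product, and the existence of the correcting $\beta$ follows from $(U_1\otimes V_2)\cap(V_1\otimes U_2)=U_1\otimes U_2$ plus averaging over the finite group.

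The genuine gap is the one you yourself flag and then dismiss too quickly: the identification in Step 2 of the $\C[X,\Gamma]$-module generated by the forms $\iota_{\xi_1'}\iota_{\xi_2'}\omega$, $\xi_i\in\IVF_{\omega_i}(X_i,\Gamma_i)$, with all of $\cC_{n-2}^V(X,\Gamma)$. Lemma \ref{nc.12.27.10} requires equality modulo \emph{every} maximal ideal, i.e.\ that at every point $x_i\in X_i$ the values of $\Gamma_i$-invariant completely integrable $\omega_i$-divergence-free fields span $T_{x_i}X_i$. Nothing in the hypotheses gives this: the proposition assumes only that each $X_i$ is a $\Gamma_i$-variety with $\Gamma_i$-AVDP, not that it is homogeneous or flexible, and $\Gamma_i$-AVDP can even hold vacuously — for instance $X_2=\C_z$ with $\Gamma_2=\Z_2$ acting by $z\mapsto -z$ and $\omega_2={\rm d}z$ has $\AVF_{\omega_2}(X_2,\Gamma_2)=\IVF_{\omega_2}(X_2,\Gamma_2)=0$, so your Step 2 produces no mixed-bidegree forms whatsoever and Step 3 collapses. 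Your parenthetical repair ("for homogeneous factors this follows by transitivity, in the spirit of Lemma \ref{nc.08.10.10}") imports a hypothesis the statement does not contain. A repair that stays within the stated hypotheses is to run the mixed brackets with $\alpha_i\in\LieAO(X_i,\Gamma_i)=\AVF_{\omega_i}(X_i,\Gamma_i)$ rather than with completely integrable fields (the absorption trick shows $f\alpha_1',\,g\alpha_2'\in\LieAO(X,\Gamma)$ for invariant $f\in\C[X_2]$, $g\in\C[X_1]$), and to use that $\cC_{n_i-1}(X_i)$ is generated as a $\C[X_i]$-module by the closed forms ${\rm d}f_1\wedge\cdots\wedge{\rm d}f_{n_i-1}$, so that ${\rm d}\big(g\,\iota_{\alpha_1}\omega_1\big)\times{\rm d}\big(f\,\iota_{\alpha_2}\omega_2\big)$ exhausts $\cB_{n_1}(X_1)\otimes\cB_{n_2}(X_2)$; this closes the gap in the non-equivariant case, while the equivariant bookkeeping (whether $\cB_{n_i}(X_i,\Gamma_i)={\rm d}\big(\C[X_i,\Gamma_i]\cdot\cZ_{n_i-1}(X_i,\Gamma_i)\big)$) is precisely where the allegedly "straightforward adjustments" require a real argument, as the $\C_z$ example above shows.
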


 \begin{proof}
Replacing $X_i$ by  $X_i/\Gamma_i$ we see that it is enough to establish the fact for trivial  $\Gamma_1$ and $\Gamma_2$.
This was proven in the case of volume forms $\omega_1$ and $\omega_2$ in  \cite[Proposition 4.3]{KaKu4}
and the proof works without change for multi-volume forms (as well as the proof of the preceding  \cite[Lemma 4.2]{KaKu4}).
\end{proof}
%In \cite{KaKu4}[Proposition 4.3] this fact was proven for trivial $\Gamma_1$ and $\Gamma_2$ but the proof remains valid in the general case
%with straightforward adjustments (more precisely, instead of the rings of regular function $\C [X_i]$
%one need to consider the subrings of $\Gamma_i$-invariant functions $\C [X_i , \Gamma_i]$). Hence we omit it and 

Now we  concentrate on
a more difficult case of $\Gamma \ne \Gamma_1 \times \Gamma_2$.
      
\begin{lemma}\label{nc.11.20.20a}  Let  Notation \ref{nc.11.21.30} hold
and let %$\{ \eta_{ij} \}_{j=1}^k \subset \IVF_{\omega_i} (X_i, \Gamma_i)$
 $\{ \xi_{ij} , \eta_{ij} ) \}_{j=1}^k \subset \IVF_{\omega_i} (X_i, \Gamma_i)$  be a collection of   semi-compatible pairs
 either  satisfying at least one of assumptions (i) and (ii) from Proposition \ref{nc.08.13.10} or being a pair semi-simple fields
satisfying the assumptions of Proposition \ref{nc.04.05.13.3}.  Let also
 each $\xi_{1j}$ be locally nilpotent and
$\xi_{2j}$  be either locally nilpotent or semi-simple.
Suppose additionally 
that  at a general point $x_i \in X_i, \, i =1,2$

 {\rm (1)}  the set $\{ \xi_{ij} (x_i)  \}_{j=1}^k$ generates $T_{x_i}X_i$ and

 {\rm (2)}  the set $\{ \xi_{ij}(x_i) \wedge \eta_{ij}(x_i) \}_{j=1}^k$
generates the whole wedge-product space $\Lambda^2 T_{x_i}X_i$.

Let a group $F$ of algebraic automorphisms
act transitively on $X$ so that
this action commutes with the $\Gamma$-action and up to constant factors preserves $\omega$.
Then $\Theta (\LieAO (X, \Gamma ))$ contains $\cB_{n-1} (X, \Gamma)$.

\end{lemma}

\begin{proof}
Let us consider vector fields  $\xi_{ij}'$ (resp. $\eta_{ij}'$) as in Notation \ref{nc.11.21.30}.
Note that each pair $( \xi_{ij}' , \eta_{ij}' )$ is $\Gamma$-semi-compatible   by Propositions \ref{nc.08.13.10}  and \ref{nc.04.05.13.3}  
(the smoothness of the quotient morphism required in  Proposition \ref{nc.04.05.13.3} survives factorization with respect to $\Gamma$
since the $\Gamma$-action is free). 
 By the same reason  every pair $\{ \xi_{1j}' , \xi_{2l}')$ is also $\Gamma$-semi-compatible.
Then for the general point $\bar x =(x_1,x_2) \in X$ the set $\{ \xi_{1j}'(\bar x ) \wedge \eta_{1j}'(\bar x) \}_{j=1}^k$
generates the image of the subspace $\Lambda^2 T_{x_1}X_1$ in $\Lambda^2 T_{\bar x}X$ under the natural embedding. Similarly one can take care of $\Lambda^2 T_{x_2}X_2\subset
\Lambda^2 T_{\bar x}X$.
By assumption  (1)  the fiber $V_{\bar x}$ of $V$  from Notation \ref{nc.11.21.30}  is generated by elements of form  $\xi_{1j}'(\bar x ) \wedge \xi_{2l}'(\bar x)$.
Since $\Lambda^2 T_{x_1}X_1 + \Lambda^2 T_{x_2}X_2 + V_{\bar x} =\Lambda^2 T_{\bar x}X$  we are under the assumptions of Corollary \ref{nc.01.01.10},
%i.e. $\Theta (\LieAO (X, \Gamma ))\supset D (\cC_{n-1} (X, \Gamma ))$.
%By Proposition \ref{nc.08.13.40}
%one has  also $\cB_{n-1} (X, \Gamma ) =D (\cC_{n-1} (X, \Gamma ))$ which
 which implies Condition (A) and thus the inclusion $\Theta (\LieAO (X, \Gamma ))\supset \cB_{n-1} (X, \Gamma)$.
\end{proof} 

\begin{remark}\label{nc.11.21.10}
Note that  with obvious modifications  Lemma \ref{nc.11.20.20a} is valid even if one of
factors, say, $X_2$ is one-dimensional. That is,  one can consider $X_2$ equal to $\C_z$ (resp. $\C^*$) with $\omega_2 ={\rm d} z$
(resp.  $\omega_2 =\frac {{\rm d}z} {z}$).
\end{remark}

\section{Main Theorem}

The aim of this section is the following.

\begin{theorem}\label{nc.11.23.20} Let Notation \ref{nc.11.18.10} hold, i.e.  $R$
is a closed reductive subgroup of a linear algebraic group $G$, $X=G/R$ be the homogeneous space of left cosets
which by Lemma \ref{nc.07.28.10}  can be presented as $X \simeq (Y/\Gamma ) \times \cR_u$
where $\cR_u$ is isomorphic to a Euclidean space
%normal subgroup of $G$ associated with the unipotent radical of the Lie algebra of $G$,
and $Y$ is a homogeneous space of a reductive group $\hat G_0$ that is an unramified covering of a maximal reductive subgroup $G_0$
of $G$ and has the following properties

(i)  a Levi semi-simple subgroup $\hat S$ of $\hat G_0$ is simply connected;

(ii) $Y=X_1 \times X_2$ where $X_1=\hat S /\hat R$ for some connected reductive  subgroup $\hat R$ of $\hat S$,
and $X_2$ is a subtorus in the connected component of the center of $\hat G_0$;

(iii) $\Gamma$ is a finite subgroup of the center of $\hat G_0$.

Let  $\hat \omega =\omega_1 \times \omega_2\times \omega_{\cR_u}$ be an algebraic volume form on $\hX=X_1 \times X_2 \times \cR_u$
as in Remark \ref{nc.12.15.30}. Suppose that $\omega_2$ is an invariant volume form on the torus $X_2$.

Then $\hat X$ has $\Gamma$-AVDP (with respect to $\hat \omega$). %, $\LieAO (\hX , \Gamma )= \AVF_\omega (\hX , \Gamma )$.
\end{theorem}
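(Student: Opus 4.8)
The plan is to deduce the theorem from the main criterion, Theorem~\ref{nc.07.14.10}, applied to the triple $(\hX,\hat\omega,\Gamma)$; thus it remains to verify (a) the inclusion $\cB_{n-1}(\hX,\Gamma)\subset\Theta(\LieAO(\hX,\Gamma))$ and (b) Condition~(B). First I would dispose of the degenerate cases $\dim X_1\le 2$. Since $\hat S$ is simply connected and semi-simple and $\hat R$ is connected reductive, a dimension count shows that then $X_1$ is a point or $X_1\cong SL_2/T$, the smooth affine quadric surface (whose algebraic volume density property is known); in both cases the center of $\hat S$ lies in $\hat R$, so $\Gamma$ acts on $\hX\cong X_1\times(X_2\times\cR_u)$ trivially on the first factor and hence through a subgroup of $\{e\}\times\Gamma_2$ of product type. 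As a torus quotiented by a finite translation subgroup is again a torus and $\cR_u$ is a Euclidean space, Corollary~\ref{nc.11.23.10}, the (algebraic) volume density property of $\C^k$ and Proposition~\ref{nc.11.25.10} give $\Gamma_2$-AVDP of $X_2\times\cR_u$; combined with the AVDP of $X_1$, a further application of Proposition~\ref{nc.11.25.10} settles these cases. So assume henceforth $\dim X_1\ge 3$ and write $W=X_2\times\cR_u$.

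Let $F=\hat G_0\times\cR_u$ act on $\hX=X_1\times W$ through $\hat G_0$ on $Y=X_1\times X_2$ (which by the proof of Lemma~\ref{nc.07.28.10} amounts to $\hat S$ acting on $X_1$ and the torus part of $\hat G_0$ acting on $X_2$) and by translations on $\cR_u$; this action is transitive, commutes with the central finite subgroup $\Gamma$, and preserves $\hat\omega=\omega_1\times\omega_2\times\omega_{\cR_u}$, because $\omega_1$ is $\hat S$-invariant ($\hat S$ is semi-simple, so has no nontrivial characters, and $\omega_1$ is unique up to a scalar by Remark~\ref{nc.12.15.30}), $\omega_2$ is invariant under torus translations, and $\omega_{\cR_u}$ is translation-invariant. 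For (b): being contained in the connected group $F$, $\Gamma$ acts trivially on $H^{n-1}(\hX,\C)$, and $\hat\omega$ is $\Gamma$-invariant, so averaging closed forms over $\Gamma$ shows $\Phi_{n-1}(\cZ_{n-1}(\hX,\Gamma))=H^{n-1}(\hX,\C)$, whence Condition~(B) reduces to $\Phi_{n-1}(\Theta(\LieAO(\hX)))=H^{n-1}(\hX,\C)$. If $\dim\cR_u\ge 2$ this is vacuous, as $H^{n-1}(\hX,\C)=0$ by the K\"unneth formula; if $\dim\cR_u=0$ it is Proposition~\ref{nc.11.18.20}; and if $\dim\cR_u=1$, for a coordinate $z$ on $\cR_u$ the field $\partial/\partial z$ is $\Gamma$-invariant, locally nilpotent and divergence-free with $\iota_{\partial/\partial z}\hat\omega=\pm\,\omega_Y$, while $[\omega_Y]$ generates $H^{n-1}(\hX,\C)\cong H^{n_Y}(Y,\C)$ since by K\"unneth $H^{n_Y}(Y,\C)=H^{n_1}(X_1,\C)\otimes H^{n_2}(X_2,\C)$, $[\omega_1]$ generates $H^{n_1}(X_1,\C)$ by Corollary~\ref{nc.11.30.10} ($X_1$ has no nonconstant invertible regular functions) and $[\omega_2]$ generates $H^{n_2}(X_2,\C)$; together with Proposition~\ref{nc.11.18.20} this gives (b).

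For (a) I would invoke Lemma~\ref{nc.11.20.20a} for the decomposition $\hX=X_1\times W$ with the group $F$ above. On $X_1$, a homogeneous space of the semi-simple $\hat S$, the required collection of $\Gamma_1$-semi-compatible pairs of completely integrable divergence-free fields satisfying the strong Condition~(A) is furnished by Lemma~\ref{nc.08.10.10} together with Proposition~\ref{nc.11.16.20}: the relevant locally nilpotent fields come from a fixed-point-free non-degenerate $SL_2$-action, are automatically divergence-free, and are $\Gamma_1$-invariant since the center of $\hat S$ commutes with that $SL_2$. On $W\cong(\C^*)^{n_2}\times\C^k$ one uses the pairwise combinations of the commuting semi-simple fields $z_i\,\partial/\partial z_i$ and locally nilpotent fields $\partial/\partial w_j$: by Example~\ref{nc.11.16.00}(1) these are $\Gamma_2$-semi-compatible with associate ideal the whole ring, and their wedges form a frame of $\Lambda^2 TW$ at every point, so the strong Condition~(A) holds on $W$. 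Lemma~\ref{nc.11.20.20a} (with the obvious modification when $W$ is one-dimensional, cf.\ Remark~\ref{nc.11.21.10}) then yields $\cB_{n-1}(\hX,\Gamma)\subset\Theta(\LieAO(\hX,\Gamma))$.

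With (a) and (b) established, Theorem~\ref{nc.07.14.10} gives $\LieAO(\hX,\Gamma)=\AVF_{\hat\omega}(\hX,\Gamma)$, i.e.\ $\hX$ has $\Gamma$-AVDP. The main obstacle is the verification of (a): one must produce, on each factor, pairs that are genuinely \emph{semi-compatible} (not merely pairs of completely integrable fields) with associate ideals large enough that their wedges — together with the ``cross'' fields manufactured inside the proof of Lemma~\ref{nc.11.20.20a} à la Lemma~\ref{nc.01.01.20} — fill all of $\Lambda^2 T\hX$; for $X_1$ this forces one past a single $SL_2$ into the flexibility of $X_1$ via Lemma~\ref{nc.08.10.10}, and the $\Gamma$-equivariant bookkeeping of all this is precisely what Lemma~\ref{nc.11.20.20a} was designed to handle. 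A subsidiary point is the separate input of the AVDP of the quadric surface $SL_2/T$ in the low-dimensional cases.
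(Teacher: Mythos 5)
For the main case $\dim X_1\ge 3$ your argument runs along the paper's own lines: a semi-compatible pair of locally nilpotent fields on $X_1$ from Propositions \ref{nc.11.16.10} and \ref{nc.11.16.20}, propagated by Lemma \ref{nc.08.10.10} to get strong Condition (A) on $X_1$, the invariant fields $z_i\partial/\partial z_i$ (and $\partial/\partial w_j$) on the torus/Euclidean part, Lemma \ref{nc.11.20.20a} for the inclusion $\cB_{n-1}(\hX,\Gamma)\subset\Theta(\LieAO(\hX,\Gamma))$, and Proposition \ref{nc.11.18.20} (plus a K\"unneth argument) for Condition (B); keeping $\cR_u$ in play instead of first removing it with Proposition \ref{nc.11.25.10} is only an organizational difference.

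The genuine gap is your disposal of the case $\dim X_1=2$. You assert that then $X_1$ is a point or $SL_2/T$, that the center of $\hS$ lies in $\hR$, and hence that $\Gamma$ acts trivially on the $X_1$-factor, so that the action is of product type and Proposition \ref{nc.11.25.10} finishes the proof. This misreads how the finite central group $\Gamma$ of Lemma \ref{nc.07.28.10} acts: $\Gamma$ is precisely the device that encodes the component group of a possibly disconnected isotropy group, and its induced action on the factor $X_1=\hS/\hR$ is in general not just left translation by the $Z(\hS)$-component of $\gamma$ --- rewriting $\gamma\cdot(st)\hR_0$ in the normal form $s''t''\hR_0$ forces one to absorb the $\hat T_0$-component of $\gamma$ into $\hR_0$, which produces in addition a right translation by an element of $N_{\hS}(\hR)$, i.e.\ an $\hS$-equivariant automorphism of $X_1$. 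For $X_1\simeq SL_2/\C^*$ this automorphism can be the nontrivial involution $(u,v,x)\mapsto(-u,-v,-x)$, coupled \emph{diagonally} with a translation of the torus factor (this is exactly what happens for suitable disconnected reductive subgroups of $SL_2\times\C^*$, and it is the situation the Corollary is meant to cover). In that case $\Gamma$ is a diagonal $\Z_2$, not a subgroup of product type, Proposition \ref{nc.11.25.10} does not apply, and on the surface $X_1$ the whole semi-compatibility machinery is unavailable (Propositions \ref{nc.11.16.10}, \ref{nc.11.16.20} require dimension at least $3$, and by Proposition \ref{nc.12.15.10} the surface $SL_2/\C^*$ admits no semi-compatible pairs at all). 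This is exactly why the paper treats $\dim X_1=2$ separately: it reduces, after splitting off tori, to Example \ref{nc.12.29.10}, where the inclusion $\cB_{n-1}(X,\Gamma)\subset\Theta(\LieAO(X,\Gamma))$ is obtained from the results of \cite{Leu} on $\Z_2$-invariant and anti-invariant completely integrable fields on $SL_2/\C^*$ --- an input entirely absent from your proposal. (Even under your reading of the action, the claim that $Z(\hS)\subset\hR$ whenever $\dim\hS/\hR=2$ is stated without proof; but the essential omission is the diagonal, non-product case.)
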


%The proof of this theorem requires some preparations. Since $\cR_u$ is a Euclidean space which always has
%AVDP (with respect to any volume form since such a form is
%unique up to a constant factor)  we can suppose that the unipotent radical $\cR_u$ is trivial
%because of the next result.

\begin{remark}\label{nc.08.13.20} It is worth mentioning that we do not assume existence of a volume form on $X= \hat X/\Gamma$ in Theorem \ref{nc.07.14.10}.
However we have AVDP for $\hX/\Gamma$ with respect to the multi-volume form induced by $\omega$ in the sense of Definition \ref{nc.11.20.10}.
\end{remark}

\begin{corollary}\label{nc.12.15.20} Let  $G$ be a linear
algebraic group, $R$ be a closed reductive subgroup of $G$, and $X$ be the homogeneous space $G/R$ . Suppose that $X$ has a $G$-invariant algebraic volume form
$\omega$. Then $X$ has AVDP with respect to $\omega$. \end{corollary}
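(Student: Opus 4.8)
The plan is to derive Corollary \ref{nc.12.15.20} from the Main Theorem \ref{nc.11.23.20} together with the decomposition and volume-form machinery already assembled in the paper. First I would invoke Lemma \ref{nc.07.28.10}: writing $X = G/R$, we may replace $R$ by a maximal reductive subgroup containing it (unique up to conjugation), pass to the covering group $\hat G_0 \simeq \hat S \times \hat T$ of a maximal reductive subgroup $G_0 \le G$, and obtain $X \simeq \hat X/\Gamma$ with $\hat X = X_1 \times X_2 \times \cR_u$, where $X_1 = \hat S/\hat R$ for a connected reductive $\hat R \le \hat S$, $X_2$ is a subtorus of the center of $\hat G_0$, $\cR_u$ is a Euclidean space, and $\Gamma$ is a finite central subgroup of $\hat G_0$. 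This places us exactly in the setting of Notation \ref{nc.11.18.10}.

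Next I would pin down the volume form. The given $G$-invariant form $\omega$ on $X$ pulls back to a $\Gamma$-invariant (indeed, since $\Gamma$ is central, genuinely invariant up to the character $\chi_\omega$) algebraic volume form $\hat\omega$ on $\hat X$. By Remark \ref{nc.12.15.30}, any algebraic volume form on $\hat X = X_1 \times X_2 \times \cR_u$ splits as $\omega_1 \times \omega_2 \times \omega_{\cR_u}$, and $\omega_1$, $\omega_{\cR_u}$ are unique up to scalars while $\omega_2$ is whatever the decomposition forces. The one point that needs a remark: since $\omega$ is $G$-invariant, $\hat\omega$ is $\hat G_0$-invariant, so its restriction to the torus factor $X_2$ is translation-invariant; thus $\omega_2$ is an invariant volume form on $X_2$, which is precisely the hypothesis of Theorem \ref{nc.11.23.20}.

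Having checked all hypotheses, Theorem \ref{nc.11.23.20} yields that $\hat X$ has $\Gamma$-AVDP with respect to $\hat\omega$, i.e. $\LieAO(\hat X, \Gamma) = \AVF_{\hat\omega}(\hat X, \Gamma)$. It then remains to descend this equality from $\hat X$ to $X = \hat X/\Gamma$. Since $\Gamma$ acts freely (the action is on a homogeneous space by a subgroup of the acting group, acting via deck transformations of a covering), one identifies $\AVF_{\hat\omega}(\hat X, \Gamma)$ with $\AVF_\omega(X)$: a $\Gamma$-invariant divergence-free field on $\hat X$ descends to a divergence-free field on the quotient, and conversely every algebraic field on $X$ lifts. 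Likewise completely integrable $\Gamma$-invariant fields on $\hat X$ descend to completely integrable fields on $X$, so $\LieAO(\hat X,\Gamma)$ maps onto $\LieAO(X)$; conversely every generator of $\LieAO(X)$ lifts to a $\Gamma$-invariant generator upstairs. Combining these identifications with the theorem gives $\LieAO(X) = \AVF_\omega(X)$, i.e. $X$ has AVDP, which is the claim.

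The main obstacle is really the descent step and, tucked inside the appeal to Theorem \ref{nc.11.23.20}, the two-dimensional case $X_1 \simeq SL_2/\C^*$ where semi-compatible locally nilpotent pairs are unavailable (Proposition \ref{nc.11.16.20} fails) and one must instead route through the explicit hypersurface description and the $\Z_2$-AVDP result for $SL_2/\C^*$ handled in Example \ref{nc.12.29.10}; but since all of that is internal to the already-proved Theorem \ref{nc.11.23.20}, for the Corollary itself the only genuine work is the identification of $G$-invariance of $\omega$ with invariance of $\omega_2$ on the torus factor and the free-quotient bookkeeping.
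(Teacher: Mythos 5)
Your proposal is correct and follows essentially the same route as the paper's proof: decompose $X\simeq \hX/\Gamma$ with $\hX=X_1\times X_2\times \cR_u$ via Lemma \ref{nc.07.28.10}, observe that $G$-invariance of $\omega$ forces the torus factor $\omega_2$ of the induced form $\hat\omega$ to be invariant, apply Theorem \ref{nc.11.23.20} to get $\Gamma$-AVDP for $\hX$, and descend through the natural identifications $\Lie_{\rm alg}^{\hat\omega}(\hX,\Gamma)\simeq\LieAO(X)$ and $\AVF_{\hat\omega}(\hX,\Gamma)\simeq\AVF_\omega(X)$. The paper states these identifications without elaboration, so your extra attention to the descent bookkeeping is harmless but not a different argument.
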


\begin{proof}
Let $\hX=X_1 \times X_2 \times \cR_u$ be from Theorem \ref{nc.11.23.20} and let
$\hat \omega=\omega_1 \times \omega_2 \times \omega_{\cR_u}$ be an algebraic volume form on $\hX$ as in Notation \ref{nc.12.15.30}.
Recall that $\hat G_0$ acts naturally on $\hat X$ so that the action of $t \in T \simeq X_2 \subset \hat G_0$ on
$x=(x_1,x_2,r) \in \hX$ is given by $t.x=(x_1, tx_2,r)$ (see the proof of Lemma \ref{nc.07.28.10}). In particular,
$\hat \omega$ is $T$-invariant iff $\omega_2$ is.
When $\hat \omega $ is induced by $\omega$  it must be $T$-invariant. Hence the assumptions of Theorem   \ref{nc.11.23.20}  hold,
i.e. $\Lie_{\rm alg}^{\hat \omega} (\hat X, \Gamma ) =\AVF_{\hat \omega} (\hat X, \Gamma)$.
Now the natural isomorphisms   $\Lie_{\rm alg}^{\hat \omega} (\hat X, \Gamma ) \simeq \LieAO (X )$
and $\AVF_{\hat \omega} (\hat X, \Gamma)\simeq  \AVF_{\omega} (X)$ imply the desired conclusion.
\end{proof}

Before presenting the proof let us consider one case related to two-dimensional homogeneous spaces where the argument is
very specific since  Proposition \ref{nc.11.16.20} is not  applicable in dimension 2  and therefore the technique of semi-compatible fields
does not work.

\begin{example}\label{nc.12.29.10}
Consider $G=SL_2$ as a subvariety of $\C^4_{a_1,a_2,b_1,b_2}$  given by $a_1b_2-a_2b_1=1$,
i.e., matrices
\begin{center}$ A =  \left[ \begin{array}{rr}
a_1& a_2  \\
b_1 & b_2 \\
 \end{array}  \right]$\end{center}
 are elements of $G$.
Let $T\simeq \C^*$ be the torus consisting of the diagonal elements and $N$ be the normalizer of $T$
in $SL_2$. That is, $N/T \simeq \Z_2 =\Gamma_1$ where the matrix
$$\, \, \, \, \, A_0= \left[
\begin{array}{rr}
0 & -1  \\
1 & 0 \\
\end{array}  \right] \in N$$ generates the nontrivial coset of $N/T$.

By Proposition \ref{nc.08.02.10} the homogeneous space $X_1=G/T$ possess a $G$-invariant volume form
$\omega_1$ while $G/N \simeq X_1/\Gamma_1$ does not (see Example \ref{nc.08.01.30})).  It is worth mentioning that  $X_1$ 
has AVDP since it
is isomorphic to the hypersurface $uv=x^2-1$ in $\C^3_{u,v,x}$ and such hypersurfaces were dealt with in \cite{KaKu4}.
Also the action of $\Gamma_1$ on $X_1$
is given by $(u,v,x)\to (-u,-v,-x)$ \footnote{Indeed,
ring of $T$-invariant regular functions on $G$ is generated by $u=a_1b_1, v=a_2b_2, y=a_1b_2$, and $z=a_2b_1$
where $y=z+1$. Hence $X_1$ is isomorphic to the hypersurface $uv=z(z+1)$ in $\C^3_{u,v,z}$. Let $x=z+1/2$. Then
 $X_1$ is isomorphic to $uv=x^2-1/4$ in $\C^3_{u,v,x}$ and replacing $(u,v,x)$ by $(2u,2v,2x)$ we get the desired equation. The formula for
the $\Z_2$-action (induced by multiplication by $A_0$) is now a straightforward computation.}.

Furthermore $\Gamma_1$-AVDP for $X_1$ was established in \cite{Leu}.   
Let us show $\Z_2$-AVDP for a more complicated object: $X=X_1 \times X_2$ where $X_2=\C^*_z$ and  
the action of $\Gamma \simeq \Z_2$ is given by $(u,v,x,z)\to (-u,-v,-x,-z)$. That is, we have the group $F = SL_2 \times \C^*$ acting
naturally on $X$ so that the action commutes with $\Gamma$-action and preserves the volume form $\omega = \omega_1 \times \omega_2$
(where $\omega_2$ is an invariant volume form on the torus $X_2$).

Let $\Gamma_2 \simeq \Z_2$ act on
$X_2 \simeq \C^*_z$ by $z \to -z$. Since for $i=1,2$ the variety $X_i$ has $\Gamma_i$-AVDP there are vector fields
 $\{ \xi_{ij} \}_{j=1}^k \subset \IVF_{\omega_i} (X_i, \Gamma_i )$
such that the set $\{ \xi_{ij} (x_i)  \}_{j=1}^k$ generates $T_{x_i}X_i$ at  each point $x_i \in X_i$. 
 Furthermore, by Proposition \ref{nc.08.13.10} one can suppose that every pair of vector fields on $X$ induced by $\xi_{1i}$ and $\xi_{2j}$ 
is $\Gamma$-semi-compatible. 
That is, by Corollary \ref{nc.01.01.10} $\Theta (\LieAO (X, \Gamma ))$ contains $D (\cC_{n-2}^V (X, \Gamma ))$ where 
$V$ is as in Notation \ref{nc.11.21.30}.

In our particular case $\chi_\omega$ is the nontrivial character on $\Gamma \simeq \Z_2$ and hence any element of $\cZ_{n-1}(X, \Gamma )$ is of form 
$$\sum_{k=-\infty}^{\infty} (f_k\omega_1 \times z^k + \tau_k \times (z^k \omega_2))$$
$\tau_k$ is a $1$-form on $X_1$ which is $\Gamma_1$-invariant (resp. $\Gamma_1$-anti-invariant)
when $k$ is odd (resp. even), and $f_k \in \C [X_1]$ is $\Gamma_1$-invariant (resp. $\Gamma_1$-anti-invariant)
when $k$ is even (resp. odd) because $\omega_2= \frac{{\rm d} z}{z}$ is $\Gamma_2$-invariant and $\omega_1$ is $\Gamma_1$-anti-invariant. Similarly
$\cC_{n-2}^V(X, \Gamma )$ is generated (as a vector space) by elements of form
$\tau_k \times z^k$. Note that $$D( \tau_k \times z^k )= {\rm d} \tau_k \times z^k+k\tau_k \times (z^k \omega_2) .$$
Since $D( \tau_k \times z^k )  \subset \Theta (\LieAO (X, \Gamma ))$,
in order to establish  $\cZ_{n-1} (X, \Gamma) \subset \Theta (\LieAO (X, \Gamma ))$ we
need to check that  exact forms of type
$$ \tau_0 \times \omega_2 + \sum_{k=-\infty}^{\infty} f_k\omega_1 \times z^k $$ are contained in
 $\Theta (\LieAO ( X , \Gamma ))$. Since this form is closed one can see that $f_k=0$ for every $k \ne 0$,
 and we are left with a form $\tau_0 \times \omega_2 +f_0\omega_1 \times 1$. Note that $\tau_0$ is
 a closed form. %\footnote{That is, this form is exact since $H^1(X_1, \Z)=0$.} on $X_1$ since $D(f_0\omega_1 \times 1)=0$. 
 By   \cite[Theorem 3.6 and Remark 3.7]{Leu} such an anti-invariant $\tau_0$ belongs to the span of
$\Theta (\IVF_{\omega_1} (X_1, \Gamma_1))$, i.e.  $\tau_0 \times \omega_2 \in \Theta (\LieAO ( X , \Gamma ))$.
Note also that $f_0\omega_1 \times 1= \iota_\nu \omega$ where $\nu$ is the complete $\Gamma$-invariant field $f_0z \partial /\partial z$, we are done.

\end{example}

\subsection{Proof of Theorem \ref{nc.11.23.20} }
Since $\cR_u$ is a Euclidean space which always has
AVDP (with respect to any volume form since such a form is
unique up to a constant factor)  we can suppose that the unipotent radical $\cR_u$ is trivial
because of Proposition \ref{nc.11.25.10}.

Thus from now on $G$ is reductive.
That is, $X =\hX /\Gamma$ where $\hX =X_1 \times X_2 =\hat G_0/\Gamma$ and $X_1$ can be assumed nontrivial by virtue of
 Corollary \ref{nc.11.23.10} .  Since $\Gamma$ is a subgroup of the center of   $\hat G_0$ its action
commutes with the actions of $\C_+$ and $\C^*$-subgroups of $\hat G_0$ induced by the left multiplication.
In particular, the semi-simple vector fields $\nu_i$ that appeared in the proof of
Corollary \ref{nc.11.23.10} are $\Gamma$-invariant. One can suppose
that each $\nu_i$ is tangent to $X_2$.
Since $\omega_2$ is an invariant volume form on $X_2$ these vector fields are of zero divergence with respect to this form.
Thus we have a collection of divergence-free $\Gamma$-invariant  semi-simple  vector fields $\{ \xi_{2j}, \eta_{2j}  \}_{j=1}^k$ on $X_2$ that
commute and for which  $\{ \xi_{2j}(x_2) \wedge \eta_{2j}(x_2) \}_{j=1}^k$
generates the whole wedge-product space $\Lambda^2 T_{x_2}X_2$ at any point $x_2 \in X_2$ as required in Lemma \ref{nc.11.20.20a}.\\

 {\em Case of $\dim X_1 \geq 3$.} 
 By Propositions \ref{nc.11.16.10} and \ref{nc.11.16.20} there is a semi-compatible pair $(\xi  , \eta )$ of locally nilpotent vector fields
on $X_1$.  Let $v_1$ and $v_2\in T_{x_1}X_1$ be the values of these vector fields at some general point $x_1 \in X_1$ (in particular these
vectors are not collinear).  Let
$\Gamma_1$ be the image of $\Gamma$  under the natural projection  $\hat G_0 = \hat S\times \hat T \to \hat S$
(i.e. $\Gamma_1$ is a finite subgroup of the center of $\hat S$). Suppose that $H$ is the group of automorphisms
of $X_1$ described in Lemma \ref{nc.08.10.10} (in particular, being generated by elements of $\C_+$-actions $H$ preserves any algebraic
volume form). Then the $H_{x_1}$-orbit of $v_1 \wedge v_2$ generates  $\Lambda^2 T_{x_1}X_1$.
Thus we get  a collection of divergence-free vector fields $\{ \xi_{1j}, \eta_{1j}  \}_{j=1}^k$ on $X_1$ such that $\{ \xi_{1j}(x_1) \wedge \eta_{1j}(x_1) \}_{j=1}^k$
generates the whole wedge-product space $\Lambda^2 T_{x_1}X_1$ at $x_1 \in X_1$   as required in Lemma \ref{nc.11.20.20a}. 

 Let the fields $\xi_{ij}', \eta_{ij}'$ on $X_1 \times X_2$  have the same meaning as
in Notation \ref{nc.11.21.30}. By construction they are $\Gamma$-invariant and one can suppose that $\{ \xi_{ij} \}_{j=1}^k$
generate $T_{x_i}X_i$ at every $x_i \in X_i$.  Since $\Gamma \subset \hat G_0$  Proposition \ref{nc.02.24.13.3} implies that  the $\Gamma$-action (as well as $\hat G_0$-action)
preserves $\omega$ up to a constant factor.
Hence Lemmas \ref{nc.08.13.40} and \ref{nc.11.20.20a} are applicable which 
implies that $\Theta (\LieAO (\hat X , \Gamma )$ contains $\cB_{n-1} (\hat X, \Gamma )$.   By Proposition \ref{nc.11.18.20}
Condition (B) holds for $\hat X$. Hence Theorem \ref{nc.07.14.10} yields the desired conclusion when $\dim X_1 \geq 3$.\\

 {\em Case of $\dim X_1 =2$.}   Choose  a subgroup $\hat S' \simeq SL_2$ of $\hat S$ so that it is
not contained in $\hat R$. Furthermore,  by the Cartan-Iwasawa-Maltsev theorem we can organize this choice so that maximal compact subgroups
$\hS_\R'$ and $\hR_\R$ of $\hS'$ and $R$ respectively are contained in the same maximal compact subgroup $\hS_\R$ of $\hS$. Note that
the group $\hS_\R' \cap \hR_\R$ is of dimension 1 since otherwise the complexification of  $\hS_\R' \cap \hR_\R$ is a two-dimensional reductive subgroup
of $\hS'$ but the only two-dimensional reductive group $\C^* \times \C^*$ is not contained in $SL_2$. Hence the image of $\hS_\R'$ under
the quotient morphism $\hS_\R \to \hS_\R / \hR_\R$ is  two-dimensional and therefore surjective. Hence  the image of $\hS'$ under
the quotient morphism $\hS \to \hS/ \hR$ is also surjective. Thus we can suppose that $\hS =SL_2$.
Since maximal tori are the only proper connected reductive subgroups of $SL_2$ we have $\hR \simeq \C^*$ and $X_1\simeq SL_2/\C^*$.

Hence $X_1$ possesses an algebraic volume form (see, Proposition \ref{nc.08.02.10}) and also AVDP \cite{Leu}.
Let $\Gamma_1$ be as before (i.e. $\Gamma_1$ is at most $\Z_2$ since it is a subgroup of the center of $SL_2$)
and let $\Gamma_2$ be the image of $\Gamma$ under the natural projection $\hat G_0 = \hat S\times \hat T \to \hat T$.
Then $\Gamma$ may be viewed as a subgroup of $\Gamma_1 \times \Gamma_2$. If they coincide then we are done by
Proposition \ref{nc.11.25.10} and the result of \cite{Leu} about $\Z_2$-AVDP for $X_1$. If not  
we can suppose that $\Gamma$ is naturally isomorphic to $\Gamma_1\simeq \Z_2$ and to $\Gamma_2$. % by Lemma \ref{nc.04.03.13.1a} 
%then
%$\Gamma =\Gamma_1' \oplus\Gamma_2'$ where $\Gamma_1'=\Z_2$  and $\Gamma_2'$ is the kernel of the natural homomorphism $\hat G_0 \to \hat S$.
%Hence replacing the torus $X_2=T$ by $T/\Gamma_2'$ we can suppose that
%$\Gamma =\Z_2$. 
Furthermore, one can present $T$ now as $T=T_1\times T_2$ where $T_1\simeq \C^*$ contains the generator of $\Gamma_2=\Z_2$
and $T_2$ is another torus. Hence $X$ is the product of $(X_1 \times T_1)/\Gamma$ and $T_2$ and by virtue of Proposition \ref{nc.11.25.10}
we can suppose now that $X_2 =T_1$. Now the desired conclusion follows from Example \ref{nc.12.29.10}.
\hspace{14.1cm}  $\square$

\section{Surfaces $p(x)+q(y)+xyz=1$}\label{surface}

Theorem \ref{nc.11.23.20}  is not the only application of the basic idea behind  our criterion.
In the case of a smooth affine simply connected surface $S$ equipped with an algebraic
volume form $\omega$ Condition (B) is trivial since $H^1(S, \C )=0$.
Furthermore, by the Grothendieck theorem  \cite{Gro}  for every  $f \in \C [S ]$ there is $ \xi \in \AVF_\omega (S) $ for which  $\iota_\xi \omega = {\rm d} f$
and the equality $\Theta (\LieAO (S))=\cB_1 (S)$ (which
implies $\LieAO (S)=\AVF_\omega (S)$) becomes equivalent
to the fact that such a $\xi$ can be chosen in $ \LieAO (S)$. The next
technical fact is useful for verification of this condition.

\begin{proposition}\label{nc.07.12.10} Let $S$ be a smooth affine surface equipped with an algebraic volume form $\omega$
and $\xi \in \AVF_\omega (S)$ be nonzero. Suppose that $f$
is a regular function such that $\iota_\xi \omega = {\rm d} f$.
Then $L_\xi (f)=0$ \footnote{This statement remains valid when $S$ is a complex surface, and $\omega , \xi$, and $f$ are holomorphic.}.
Furthermore, suppose that $S$ is rational, there are no nonconstant invertible functions on $S$,  and $\xi$ does not vanish identically on any divisor in $S$.
Then the kernel of $\xi$ in $\C [S]$ coincides with $\Ker \xi = \C [f]$.
\end{proposition}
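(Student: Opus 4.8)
The first assertion is immediate. Since $f$ is a function, $L_\xi(f)=\xi(f)=\iota_\xi({\rm d}f)=\iota_\xi\iota_\xi\omega=0$, because $\iota_\xi\circ\iota_\xi=0$. For the second assertion, note first that $f$ is nonconstant: if it were constant we would have ${\rm d}f=\iota_\xi\omega=0$, forcing $\xi=0$ since $\omega$ is nowhere zero. Hence $\C[f]$ is a polynomial subring of $\Ker\xi$, and it suffices to prove the reverse inclusion $\Ker\xi\subseteq\C[f]$. The plan is: (a) show every element of $\Ker\xi$ is algebraically dependent on $f$; (b) realize $\Ker\xi$ as the ring of regular functions on a smooth affine curve $C$ that is dominated by $S$ and through which $f$ factors; (c) use rationality of $S$ together with the absence of nonconstant units to force $C\cong\aff^1$; (d) use the ``no invariant divisor'' hypothesis a second time to force the resulting polynomial to be linear.

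For (a): if $g\in\Ker\xi$ then both ${\rm d}f$ and ${\rm d}g$ annihilate $\xi$, so at every point of the open set $U=\{x\in S:\xi(x)\ne0\}$ the covectors ${\rm d}f$ and ${\rm d}g$ lie in the one-dimensional annihilator of $\C\,\xi(x)$ and are therefore proportional; thus the regular $2$-form ${\rm d}f\wedge{\rm d}g$ vanishes on $U$. Since $\xi$ vanishes on no divisor, $S\setminus U$ has codimension $\ge2$, so $U$ is dense and ${\rm d}f\wedge{\rm d}g\equiv0$ on $S$; as we are in characteristic zero this means $f$ and $g$ are algebraically dependent over $\C$, i.e. $g$ is algebraic over $\C(f)$. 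For (b): extend $\xi$ to a derivation of $\C(S)$ and let $K=\{g\in\C(S):\xi(g)=0\}$, a subfield algebraically closed in $\C(S)$ (standard for derivations in characteristic $0$) which contains $f$; hence $K$ contains $F':=$ the algebraic closure of $\C(f)$ in $\C(S)$, a finite extension of $\C(f)$. Let $\tilde R$ be the integral closure of $\C[f]$ in $F'$; it is a finitely generated normal $\C$-algebra (Noether), so $C:=\Spec\tilde R$ is a smooth affine curve, and since $S$ is smooth, hence normal, we have $\tilde R\subseteq\C[S]$ and the rational factorization of $f$ through $C$ is an honest morphism $\mu\colon S\to C$. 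Now $\Ker\xi=\mu^*\tilde R$: the inclusion $\mu^*\tilde R\subseteq\C[S]\cap F'\subseteq\C[S]\cap K=\Ker\xi$ is clear, and conversely any $g\in\Ker\xi$ lies in $F'$ by (a), hence is the pullback of a rational function on $C$ with no poles (as $g$ is regular on $S$), i.e. $g\in\mu^*\tilde R$.

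For (c): $C$ is dominated by the rational surface $S$, hence unirational, hence rational, so $C$ is an open subset of $\proj^1$; moreover $\mu^*$ identifies $\C[C]$ with a subring of $\C[S]$, so every unit of $\C[C]$ is a unit of $\C[S]$ and $\C[C]^*=\C^*$. A smooth affine rational curve with no nonconstant units is $\proj^1$ minus exactly one point, so $C\cong\aff^1$. Writing $h\in\C[S]$ for the pullback of a coordinate on $C$, we get $\Ker\xi=\C[h]$, hence $f=\psi(h)$ for some $\psi\in\C[T]$ with $d:=\deg\psi\ge1$. For (d): if $d\ge2$, choose $c\in\C$ with $\psi'(c)=0$. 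The level set $D=\{h=c\}$ is nonempty (otherwise $h-c$ would be a nonconstant unit of $\C[S]$), hence a divisor, and along $D$ one has ${\rm d}f=\psi'(h)\,{\rm d}h=\psi'(c)\,{\rm d}h=0$ as a covector at every point, so $\iota_\xi\omega={\rm d}f$ vanishes identically on $D$, whence $\xi$ vanishes identically on $D$ — contradicting the hypothesis. Therefore $d=1$ and $\C[f]=\C[h]=\Ker\xi$.

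The main obstacle is step (b): identifying $\Ker\xi$ precisely with the coordinate ring of the smooth affine curve $\Spec\tilde R$, which requires finite generation and normality of $\tilde R$ and the fact that $f$ factors through a genuine morphism out of $S$. One can shorten this by invoking Zariski's finiteness theorem to obtain finite generation of $\C[S]\cap K$ directly, at the cost of citing a heavier external result; the remaining steps (c) and (d) are then routine.
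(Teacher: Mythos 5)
Your proof is correct and follows essentially the same route as the paper: both establish that $\Ker \xi$ is the pullback of the coordinate ring of a rational curve without nonconstant units (hence a polynomial ring $\C[h]$), write $f$ as a polynomial in $h$, and use the hypothesis that $\xi$ vanishes on no divisor to force that polynomial to be linear. Your treatment is somewhat more careful than the paper's (which argues per element $g\in\Ker\xi$ via the image of $(f,g)$ and its normalization, and leaves the nonemptiness of the zero divisor of $p'(h)$ implicit), but the key ideas coincide.
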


\begin{proof} By Formula (2) one has
$$L_\xi (f)=\iota_\xi {\rm d} f+ {\rm d} \iota_\xi f =\iota_\xi {\rm d}f= \iota_\xi \iota_\xi \omega=0$$
which yields the first statement. For the second statement note that  $\Ker \xi$
is of transcendence degree 1 over $\C$. Indeed, $\Ker \xi \ne \C$ since $f$ is not constant and $\Ker \xi$ cannot be
of  transcendence degree 2 since otherwise being algebraically closed in $\C [S]$ it coincides with $\C [S]$.
Thus for any $g \in \Ker \xi$ the image of the map $(f,g) : S \to \C^2$ is a curve $C$. Since $S$ is rational $C$
is rational. Furthermore $C$ does not admit nonconstant invertible functions.
Hence $C$ is a polynomial curve, i.e. the ring of regular functions on its normalization is isomorphic to $\C [h]$
where $h$ is a rational continuous function on $C$. In particular $h$ generates a continuous rational function on $S$
(denoted by the same symbol $h$) which is regular because of the smoothness of $S$.  Suppose that $f =p (h)$ where
$p$ is a polynomial of degree at least 2. 
Then  $\iota_\xi \omega = {\rm d} f = p'(h) {\rm d} h$ where the left-hand side does not vanish on any divisor of $S$
while the right-hand side has a nontrivial zero locus $p'(h)=0$ which is a divisor. This contradiction concludes the proof.

\end{proof}

\begin{notation}\label{nc.12.16.10} We consider a hypersurface $S$ in $\C_{x,y,z}^3$ given by an equation
$$p(x)+q(y)+xyz=1, \, \, \, {\rm i.e.} \, \, \, z={\frac{1-p(x)-q(y)}{xy}}$$
where $p$ and $q$ are polynomials such that $p(0)=q(0)=0$ and
$1-p(x)$ and $1-q(y)$ have simple roots only.  Note that $S$ contains the torus $T \simeq \C_x^* \times \C_y^*$ and
up to a constant factor $$\omega = \frac{{\rm d}x \wedge {\rm d} y}{xy}$$ is the only algebraic volume form on $T$ that extends
regularly to $S$.

\end{notation}

\begin{remark} One can check that $S$ is obtained via half-locus attachments (in terminology of Fujita \cite{Fu}) to the
boundary of $T $ at points $(x_1,0), \ldots, (x_k,0)$ and  at points
$(0,y_1), \ldots (0,y_l)$ where $x_1, \ldots , x_k$  (resp. $y_1, \ldots , y_l$) are the roots
of $1-p$ (resp. $1-q$). That is, we blow $\C^2_{x,y}$ up at these points and remove the proper
transform of the cross $xy=0$.  In particular $S$ is simply connected, has no nonconstant invertible functions, and  is  of logarithmic Kodaira dimension 0
 since this dimension for torus is 0  and  half-locus attachments do not change the Kodaira dimension . The last fact implies that
$S$ does not admit nontrivial algebraic $\C_+$-actions and it can be also shown that it does not have nontrivial
algebraic $\C^*$-actions either.  Nevertheless $S$ is transitive with respect
to the group of holomorphic automorphisms  generated by elements of phase flows of complete algebraic vector fields
(it is enough to use the algebraic vector fields listed in Lemma \ref{nc.12.16.20} below).
Hence it is interesting to find out whether it has ADP or AVDP (with respect to $\omega$).
For the special case of $p(x)=x$ and $q(y)=y$
we described in \cite {KaKu3} all complete algebraic vector fields on $S$ which turned out to be divergence-free, i.e. ADP does not hold.
However as we see below AVDP is valid even in the general case.
\end{remark}

\begin{lemma} \label{nc.12.16.20} Every regular function $f$ on $S$ can uniquely be written in the form
$$f= a_0 + \sum_{i=1}^N a_i x^i + \sum_{i=1}^N b_i y^i +  \sum_{i=1}^N c_i z^i  +
     \sum_{i,j =1}^N a_{ij} x^i  y^j + \sum_{i,j =1}^N b_{ij} x^i  z^j+ \sum_{i,j =1}^N c_{ij} y^i  z^j \leqno{(4)}$$
and the vector fields
$$\delta_z= (q'(y)+xz) \partial / \partial x - (p'(x)+yz) \partial / \partial y,$$
$$\delta_y=-xy \partial / \partial x + ((p'(x) +yz) \partial / \partial z,$$
$$ {\rm and}\, \, \,  \delta_x= -xy \partial / \partial y + (q'(y) +xz) \partial / \partial z$$
are complete on $S$ and of $\omega$-divergence zero. Furthermore,
$\delta_z , \delta_y, \delta_x$ vanish on a finite set only and their kernels are $\C [z], \C [y]$,
and $\C [x]$ respectively.

\end{lemma}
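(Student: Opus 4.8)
I would prove the three assertions in turn, the complete integrability of $\delta_z$ being the real difficulty. For the normal form I would start from $\C[S]=\C[x,y,z]/(g)$ with $g=p(x)+q(y)+xyz-1$, which is irreducible (a primitive polynomial of degree one in $z$ over $\C[x,y]$, its coefficients $xy$ and $p(x)+q(y)-1$ being coprime since $x\nmid q(y)-1$ and $y\nmid p(x)-1$); hence $\C[S]$ is a domain and $xy$ is a non-zero-divisor. For existence of $(4)$, apply the rewriting $x^{a}y^{b}z^{c}=x^{a-1}y^{b-1}z^{c-1}(1-p(x)-q(y))$ to each monomial with $a,b,c\ge1$: every step strictly lowers the $z$-degree of all monomials produced, so the procedure terminates and leaves a $\C$-combination of monomials with at least one zero exponent — precisely the terms in $(4)$. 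For uniqueness I would restrict to the dense torus $T=\{xy\ne0\}\cap S$, obtaining an embedding $\C[S]\hookrightarrow\C[S][(xy)^{-1}]=\C[x^{\pm1},y^{\pm1}]$ that sends $z$ to $w:=(1-p(x)-q(y))/(xy)$. Now $x^{-1}y^{-1}$ (coefficient $1$) is the only monomial of $w$ with both exponents negative, so $w^{c}$ has a unique ``lower-left'' monomial $x^{-c}y^{-c}$, and the part of $w^{c}$ of $x$-degree exactly $-c$ equals $x^{-c}y^{-c}B(y)^{c}$ with $B(y)=1-q(y)$, $B(0)\ne0$ (symmetrically in $y$). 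Given a vanishing combination of the monomials of $(4)$, let $C$ be the top power of $z$ occurring and extract the part of $x$-degree $-C$: only the images of $z^{C}$ and of the $y^{j}z^{C}$ survive, and $B(y)^{C}(\lambda_{0}+\sum_{j}\mu_{j}y^{j})=0$ forces their coefficients to vanish; symmetrically (part of $y$-degree $-C$) the coefficients of the $x^{i}z^{C}$ vanish too, and a descending induction on $C$ reduces to the visibly independent monomials $x^{a}y^{b}$.

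Next I would observe, by a direct cross-product computation, that $\delta_z=\nabla g\times\nabla z$, $\delta_y=\nabla g\times\nabla y$ and $\delta_x=-\nabla g\times\nabla x$. Each such field annihilates $g$, hence is tangent to $S$, and one sees at once that $\delta_z(z)=\delta_y(y)=\delta_x(x)=0$. On $\{xy\ne0\}$ one has $ {\rm d}g\wedge\frac{{\rm d}x\wedge{\rm d}y}{xy}=\partial_zg\cdot\frac{{\rm d}z\wedge{\rm d}x\wedge{\rm d}y}{xy}={\rm d}x\wedge{\rm d}y\wedge{\rm d}z$ because $\partial_zg=xy$; combined with $\iota_{\nabla g\times\nabla h}({\rm d}x\wedge{\rm d}y\wedge{\rm d}z)={\rm d}g\wedge{\rm d}h$ this yields $\iota_{\nabla g\times\nabla h}\omega\equiv-{\rm d}h\pmod{{\rm d}g}$, so on $S$ we get $\iota_{\delta_z}\omega=-{\rm d}z$, $\iota_{\delta_y}\omega=-{\rm d}y$, $\iota_{\delta_x}\omega={\rm d}x$. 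These forms are exact, hence all three fields have $\omega$-divergence zero, and this computation also records the functions needed for the last step.

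The hard part will be complete integrability. For $\delta_y$ the integral curves satisfy $\dot y=0$, $\dot x=-xy$, $\dot z=p'(x)+yz$, so $y(t)\equiv y_{0}$, $x(t)=x_{0}e^{-y_{0}t}$, and $z(t)$ then solves a scalar linear ODE whose coefficient and forcing term are entire in $t$; hence every integral curve is entire in $t$ and $\delta_y$ is complete on $S$, and the same argument works verbatim for $\delta_x$. For $\delta_z$ the integral curves lie in the fibres $C_{c}=\{z=c\}\cap S=\{p(x)+q(y)+cxy=1\}$, on which $\delta_z$ is the Hamiltonian field of $p(x)+q(y)+cxy$, and one must show no integral curve escapes to infinity of $C_{c}$ in finite complex time. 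When $\deg p=1$ or $\deg q=1$ the system $\dot x=q'(y)+cx$, $\dot y=-(p'(x)+cy)$ linearises in one variable and the integral curves become finite sums of exponentials, so completeness is immediate; the general case requires a closer study of $C_{c}$ near its points at infinity, and it is here that the hypotheses on $p,q$ (and the picture of $S$ as $T$ with affine lines attached along its boundary) must genuinely be used. This is the step I expect to be the main obstacle.

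Finally, for the zero loci and kernels: the common zero locus on $S$ of $q'(y)+xz$ and $p'(x)+yz$ is $0$-dimensional — two independent conditions on a surface, with smoothness of $S$ together with the simple-root hypotheses excluding a common curve — so $\delta_z$ vanishes only on a finite set, and likewise $\delta_y$ and $\delta_x$. Since $S$ is rational (it contains the dense torus $T$), has no nonconstant invertible functions, and $\delta_z$ vanishes on no divisor, Proposition~\ref{nc.07.12.10} applied to the function $-z$ (using $\iota_{\delta_z}\omega={\rm d}(-z)$) gives $\Ker\delta_z=\C[-z]=\C[z]$, and in the same way $\Ker\delta_y=\C[y]$ and $\Ker\delta_x=\C[x]$.
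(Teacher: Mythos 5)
Most of your proposal is sound and, where it overlaps with the paper's proof, follows essentially the same route: your explicit integration of $\delta_x$ and $\delta_y$ (constant $y$, exponential $x$, linear ODE for $z$) is exactly the paper's observation that these fields are push-forwards of $-xy\,\partial/\partial y$ and $-xy\,\partial/\partial x$ from the torus $T\subset S$; your identities $\iota_{\delta_z}\omega=-{\rm d}z$, $\iota_{\delta_y}\omega=-{\rm d}y$, $\iota_{\delta_x}\omega={\rm d}x$ are precisely what the paper uses (they reappear in the proof of Theorem \ref{nc.12.16.40}); and your determination of the kernels via Proposition \ref{nc.07.12.10} matches the paper's argument, which likewise reduces to the finiteness of the critical locus of $z$ (resp. $x$, $y$). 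Your normal-form argument (rewriting $xyz\mapsto 1-p-q$ for existence, Laurent expansion on $T$ for uniqueness) is more detailed than the paper's one-line remark and is fine.

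The genuine gap is the one you flag yourself: complete integrability of $\delta_z$ for general $p,q$. This is one of the assertions of the lemma, and the one Theorem \ref{nc.12.16.40} actually needs (the fields $c(z)\delta_z$ and the brackets $[z^i\delta_z,y^j\delta_y]$ must come from completely integrable fields), yet your proposal establishes it only in the low-degree situations where the fibre system linearizes. Your worry about the behaviour of the fibres $C_c$ at infinity is well placed and cannot be waved away: on the fibre $z=0$ the field restricts to $q'(y)\,\partial/\partial x-p'(x)\,\partial/\partial y$ on $\{p(x)+q(y)=1\}$, and already for $p(x)=x^3$, $q(y)=y^3$ this is the field dual to the regular differential ${\rm d}x/(3y^2)$ of the smooth projective cubic $x^3+y^3=1$; that field extends without zeros to the three points at infinity, so in a local coordinate $u=1/x$ there one has $\dot u\to-3\zeta^2\neq 0$ and trajectories leave the affine curve in finite complex time. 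Hence completeness of $\delta_z$ is not a routine verification and in this generality requires either a genuine argument or additional hypotheses on $p,q$; note that the paper's own proof supplies nothing here either --- it calls this point ``a straightforward computation'' and only proves $\Ker\delta_z=\C[z]$. As written, this assertion of the lemma remains unproven in your write-up, and no elaboration of your fibrewise strategy will close it without confronting exactly the phenomenon above.
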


\begin{proof} The first statement is a consequence of the equation $p(x)+q(y)+xyz=1$. For $\delta_x$ and $\delta_y$ all claims follow from the fact that these fields are the images
of the fields      $-xy \partial / \partial y$  and   $ -xy \partial / \partial x$ on $T$ under the natural embedding $T \to S$.
For $\delta_z$ it is a straightforward computation and we shall check only the fact that $\Ker \delta_z = \C [z]$.
Since the $\C [z]$ is contained in the kernel by Proposition \ref{nc.07.12.10} it suffices to show that $z$ cannot be presented as a nonlinear polynomial
$r(h)$ of another function $h$ on $S$. This follows immediately from the fact that the differential of $z$ vanishes at the set given
by $p'(x)x+(1-p(x)-q(y))= q'(y)y+(1-p(x)-q(y))=0$ which is finite. Hence we are done.

\end{proof}

\begin{theorem}\label{nc.12.16.40} The surface $S$ from Notation \ref{nc.12.16.10} has AVDP.
\end{theorem}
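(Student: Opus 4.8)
The plan is to bypass the machinery of semi-compatible pairs, which is unavailable here: by the remark preceding the theorem $S$ admits no nontrivial $\C_+$- or $\C^*$-actions, so $\delta_x,\delta_y,\delta_z$ are neither locally nilpotent nor semi-simple, and Propositions \ref{nc.11.16.10}, \ref{nc.11.16.20} do not apply. Instead one argues directly. Since $S$ is simply connected, $H^1(S,\C)=0$, so $\cZ_1(S)=\cB_1(S)$ and Condition (B) is vacuous; as explained at the start of this section, establishing AVDP amounts to showing $\Pcal:=D^{-1}\circ\Theta(\LieAO(S))=\C[S]$, where (as $n=2$, $D=D_1$) $\Pcal=\{\,f\in\C[S]:{\rm d}f\in\Theta(\LieAO(S))\,\}$ is a $\C$-subspace of $\C[S]$ containing the constants. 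The structural point is that $\Pcal$ is closed under the Poisson bracket $\{\phi,\psi\}:=\iota_{\Theta^{-1}({\rm d}\phi)}\,\iota_{\Theta^{-1}({\rm d}\psi)}\,\omega$ attached to the symplectic form $\omega$: if $\phi,\psi\in\Pcal$, then $\xi:=\Theta^{-1}({\rm d}\phi)$ and $\eta:=\Theta^{-1}({\rm d}\psi)$ lie in $\LieAO(S)$, hence so does $[\xi,\eta]$, and Formula (1) gives $\Theta([\xi,\eta])={\rm d}\,\iota_\xi\iota_\eta\omega$, i.e. $\{\phi,\psi\}\in\Pcal$. This bracket is a derivation in each argument, hence determined by the three values $\{x,y\},\{x,z\},\{y,z\}$, which a short computation (using $\iota_{\delta_x}\omega={\rm d}x$, $\iota_{\delta_y}\omega=-{\rm d}y$, $\iota_{\delta_z}\omega=-{\rm d}z$) evaluates as $\{x,y\}=-xy$, $\{x,z\}=q'(y)+xz$, $\{y,z\}=-(p'(x)+yz)$; the precise signs, depending on an orientation convention, are irrelevant, and one checks consistency with $p(x)+q(y)+xyz=1$.

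Next I would show $x^i,y^i,z^i\in\Pcal$ for every $i\ge1$. By Lemma \ref{nc.12.16.20} the fields $\delta_x,\delta_y,\delta_z$ are completely integrable of $\omega$-divergence zero with kernels $\C[x],\C[y],\C[z]$; hence $x^{i-1}\delta_x$, $y^{i-1}\delta_y$, $z^{i-1}\delta_z$ are again completely integrable and divergence-free (a multiple of a complete field by an element of its kernel is complete, and $\diver_\omega(f\xi)=\xi(f)=0$ when $f\in\Ker\xi$, as already used in Proposition \ref{nc.08.11.20}). From $\iota_{\delta_x}\omega={\rm d}x$ we get $\iota_{x^{i-1}\delta_x}\omega=x^{i-1}{\rm d}x=\frac1i{\rm d}(x^i)$, and likewise for $y,z$; since these interior products lie in $\Theta(\IVF_\omega(S))\subset\Theta(\LieAO(S))$ we obtain $x^i,y^i,z^i\in\Pcal$. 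The monomials $x^iy^j$ ($i,j\ge1$) now follow from bracket-closedness, since $\{x^i,y^j\}=-ij\,x^iy^j$; thus $\C[x,y]\subset\Pcal$.

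It remains to prove $x^iz^j,y^iz^j\in\Pcal$ for all $i,j\ge1$; by the symmetry $x\leftrightarrow y$, $p\leftrightarrow q$ it is enough to treat $x^iz^j$. The derivation rule gives $\{x^i,z^j\}=ij\,x^{i-1}z^{j-1}(q'(y)+xz)=ij\,x^iz^j+ij\,x^{i-1}z^{j-1}q'(y)$, so $x^iz^j=\frac{1}{ij}\{x^i,z^j\}-x^{i-1}z^{j-1}q'(y)$, where $\{x^i,z^j\}\in\Pcal$ because $x^i,z^j\in\Pcal$. Hence one only needs $x^{i-1}z^{j-1}q'(y)\in\Pcal$, and this I would handle by an induction on the $z$-degree of elements of $\C[S]$ (defined via the normal form of Lemma \ref{nc.12.16.20}): the case of $z$-degree $0$ is the inclusion $\C[x,y]\subset\Pcal$ just proved, and in the inductive step one rewrites any monomial $x^ay^bz^d$ with $a,b\ge1$, using $xyz=1-p(x)-q(y)$, as $x^{a-1}y^{b-1}z^{d-1}\bigl(1-p(x)-q(y)\bigr)$, which has strictly smaller $z$-degree; consequently a polynomial in $x,y$ times $z^{d}$ has $z$-degree $\le d$ in normal form, so each $x^{i-1}z^{d}q'(y)$ appearing at level $d+1$ lies in $\Pcal$ by the induction hypothesis, and the displayed identity then puts $x^iz^{d+1}$ (and symmetrically $y^iz^{d+1}$, and trivially $z^{d+1}$) into $\Pcal$. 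Since by Lemma \ref{nc.12.16.20} these monomials span $\C[S]$, this yields $\Pcal=\C[S]$, i.e. $\Theta(\LieAO(S))\supset{\rm d}\,\C[S]=\cB_1(S)=\cZ_1(S)$, and hence $\LieAO(S)=\AVF_\omega(S)$.

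I expect the only real labor to be the bookkeeping in the last paragraph — the induction on $z$-degree together with the substitution $xyz=1-p-q$; conceptually there is no obstacle, because all the hard input (the explicit completely integrable fields $\delta_x,\delta_y,\delta_z$ and the normal form for $\C[S]$) is already provided by Lemma \ref{nc.12.16.20}, and the entire effect of the volume form is captured by the Poisson-closedness of $\Pcal$.
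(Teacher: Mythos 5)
Your proof is correct and is essentially the paper's own argument in different packaging: the paper likewise uses $a(x)\delta_x$, $b(y)\delta_y$, $c(z)\delta_z$ to hit the pure-power summands of the normal form (4) and Lie brackets such as $[z^i\delta_z,y^j\delta_y]$ (your Poisson-closedness of $\Pcal=D^{-1}\circ\Theta(\LieAO(S))$ is exactly Formula (1)) to hit the mixed summands, and then concludes from $\cB_1(S)=\cZ_1(S)$. The only real difference is that your induction on the $z$-degree spells out the reduction of the cross terms like $x^{i-1}z^{j-1}q'(y)$ via $xyz=1-p(x)-q(y)$, a bookkeeping step the paper compresses into a bracket computation written as if $p$ were linear together with the phrase ``treated similarly''.
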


\begin{proof}
 As we mentioned before for every  $f \in \C [S ]$ there is $ \xi \in \AVF_\omega (S) $ for which
 $\iota_\xi \omega = {\rm d} f$.
Let $\cV$ be the subspace of $\C [S]$ consisting of all functions $f$ with $a_0 =0$ in Formula (4). Then
the map $f \to \xi$ induces an isomorphism $\Psi : \cV \to  \AVF_\omega (S) $. By Proposition \ref{nc.07.12.10} and Lemma \ref{nc.12.16.20}
up to constant factors $\Psi^{-1}$ sends
$\delta_z, \delta_y,$ and $\delta_x$ from Lemma \ref{nc.12.16.20}
to the functions $-z$,  $-y$ and $-x$ respectively  (which can be checked precisely by a direct computation).

Therefore for  $c(z)= -\sum_{i=1}^N ic_i z^{i-1}$  the  vector field $c(z) \delta_z$ is
complete, divergence-free, and $\Psi^{-1} (c(z) \delta_z) = \sum_{i=1}^N c_i z^{i}$ that is
the third nonconstant summand in the Formula (4).
The first and the second  nonconstant summands can be taken care of by vector fields of form $b (y) \delta_y$  and $a (x) \delta_x$.

Furthermore,
$$i_{[\delta_z, \delta_y]} \omega   =  {\rm d} (i_{\delta_z} ( i_{\delta_y} \omega)) = {\rm d}  (i_{\delta_z} ) {\rm d} y =
 {\rm d}  L_{\delta_z} (y)  = {\rm d} (1+yz) = {\rm d} (yz).$$
Thus $\Psi^{-1}$ sends the  Lie bracket $ [z^i \delta_z , y^j \delta_y ] \in \LieAO (S)$ to the monomial $z^{i+1} y^{j+1}$.
This shows that the last summand in Formula (4) is dual to an element from $\LieAO (S)$. The two remaining nonconstant summands can be treated similarly and thus
for any $f \in \cV$ one has  $\xi= \Psi (f) \in \LieAO (S)$
which yields the desired conclusion.

\end{proof}

\section{Appendix: algebraic volume forms on homogeneous spaces}

In this section we discuss some simple and perhaps known facts about algebraic volume forms.
If a smooth affine algebraic variety possesses such a form and does not admit
nonconstant invertible regular functions then the form is unique up to a constant factor.

Another well-known fact is that a linear algebraic group $G$ has a left-invariant algebraic volume form (which is simultaneously right-invariant
in the case of a reductive $G$)
but a homogeneous space $G/ R$ may not have a similar form (see Example \ref{nc.08.01.30} below).
The criterion for existence of such a form on $G/R$ is a straightforward analogue
of the criterion about the existence of an invariant Haar measure on a real homogeneous space in terms
of modular functions \footnote{
Recall that for a real Lie group $G_\R$ the modular function
$\Delta_{G_\R} : G_\R \to \R_+$ is given by $g \to |{\rm det} \, {\rm ad}_g |$ for $g \in G_R$ where ${\rm ad}_g$
is the adjoint action on the Lie algebra. If $R_\R$ is a closed Lie subgroup of $G_\R$ then the
homogeneous space $G_\R/R_\R$ has a $G_R$-invariant Haar measure iff $\Delta_{G_\R}|_R =\Delta_{R_\R}$.}.
In order to describe this criterion we need the following.

\begin{definition}\label{nc.08.01.10}  Let $G$  be a linear algebraic group, and $N$ be a subgroup
of $G$, and $H$ be a subgroup of the normalizer of $N$ in $G$.
Consider function $\tilde \Delta_{H,N} :  H \to \C^*$ that assigns to each $h \in H$ the determinant $ {\rm det} \, {\rm ad}_h|_{\ngoth} $  of the adjoint
action of $h$ on the Lie algebra $\ngoth$ of $N$ (in particular  $\tilde \Delta_{H,N}$ is a character of $H$).
We say that  $\tilde \Delta_{H,N}$ the sub-modular function of the pair $(H, N)$. In the case of $H=N=G$ we
call $\tilde \Delta_G := \tilde \Delta_{G,G}$ the sub-modular function of $G$.
\end{definition}

\begin{proposition}\label{nc.08.01.20} Let $G$ be a connected linear algebraic group, $\cR_u$ be the normal subgroup of $G$
associated with the unipotent radical of the Lie algebra of $G$, $H$ be a
maximal reductive subgroup of $G$, and $T$ be the connected identity component of the center of $H$.  Then
$\tilde \Delta_G \equiv 1$ iff $\tilde \Delta_{T, \cR_u } \equiv 1$. In particular, for every connected reductive group
its sub-modular function is the trivial character.

\end{proposition}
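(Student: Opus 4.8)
The plan is to combine the Mostow (Levi) decomposition of $G$ with two elementary facts about characters: a morphism of algebraic groups to $\C^*$ is trivial on every unipotent subgroup and on every commutator subgroup, so that $\tilde\Delta_G$ is entirely determined by its values on the central torus $T$ of a maximal reductive subgroup; and for a reductive group the central torus acts trivially under the adjoint representation. Together these will localise all of the content of $\tilde\Delta_G$ onto the action of $T$ on $\ngoth:=\Lie\cR_u$, which is precisely $\tilde\Delta_{T,\cR_u}$.

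First I would settle the reductive case, which simultaneously yields the last assertion of the proposition. If $G$ is connected reductive, then $T=Z(G)^{\circ}$, and for $t\in T$ conjugation by $t$ is the trivial automorphism of $G$, hence ${\rm ad}_t={\rm id}_{\ggoth}$ and $\tilde\Delta_G(t)=1$. Being a character, $\tilde\Delta_G$ is trivial on $[G,G]$; since $G=[G,G]\cdot Z(G)^{\circ}$ for a connected reductive group, this gives $\tilde\Delta_G\equiv 1$. In particular, for a maximal reductive subgroup $H$ of an arbitrary connected $G$ one has $\tilde\Delta_H\equiv 1$, where $\tilde\Delta_H(h)={\rm det}\,({\rm ad}_h|_{\hgoth})$.

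For general connected $G$, write $G$ as a semidirect product of $H$ and $\cR_u$ by Mostow's theorem; on Lie algebras $\ggoth=\hgoth\oplus\ngoth$, and both summands are ${\rm ad}_H$-invariant ($\ngoth$ because $\cR_u$ is normal in $G$, $\hgoth$ because $H$ is a subgroup). The character $\tilde\Delta_G$ vanishes on the unipotent normal subgroup $\cR_u$ (the adjoint action of a unipotent element is a unipotent operator, of determinant $1$), so it factors through $G/\cR_u\cong H$; thus $\tilde\Delta_G\equiv 1$ iff $\tilde\Delta_G|_H\equiv 1$. For $h\in H$ the block splitting gives $\tilde\Delta_G(h)={\rm det}\,({\rm ad}_h|_{\hgoth})\cdot{\rm det}\,({\rm ad}_h|_{\ngoth})=\tilde\Delta_H(h)\cdot\tilde\Delta_{H,\cR_u}(h)=\tilde\Delta_{H,\cR_u}(h)$ by the reductive case. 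Finally $\tilde\Delta_{H,\cR_u}$ is a character of $H$, hence trivial on $[H,H]$; as $H=[H,H]\cdot T$, it is trivial iff its restriction to $T$ is, and that restriction is $\tilde\Delta_{T,\cR_u}$ by the very definition of the sub-modular function. Chaining the equivalences $\tilde\Delta_G\equiv 1\iff\tilde\Delta_G|_H\equiv 1\iff\tilde\Delta_{H,\cR_u}\equiv 1\iff\tilde\Delta_{T,\cR_u}\equiv 1$ completes the proof.

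The computations involved are routine; the only points that need care are the structural inputs rather than any analysis: that the Mostow decomposition is ${\rm ad}_H$-compatible at the level of Lie algebras, and that a character of a connected reductive group is detected on its central torus via $G=[G,G]\cdot Z(G)^{\circ}$. I do not anticipate a genuine obstacle here — the single conceptual observation that makes everything collapse is that central torus elements of a reductive group act trivially under ${\rm ad}$, so $\tilde\Delta_G$ carries no information coming from $\hgoth$ and all of it is concentrated in the action on $\ngoth$.
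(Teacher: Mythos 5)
Your proof is correct and follows essentially the same route as the paper: you use the Mostow decomposition $G=H\ltimes \cR_u$ with $H$ generated by its semisimple part and the central torus $T$, observe that the character $\tilde\Delta_G$ dies on the unipotent and semisimple pieces and that $T$ acts trivially on $\hgoth$, and so reduce everything to $\tilde\Delta_{T,\cR_u}$, exactly as in the paper's computation $\det\,{\rm ad}_{str}=\det\,{\rm ad}_t$ and $\tilde\Delta_{T,G}=\tilde\Delta_{T,H}\tilde\Delta_{T,\cR_u}=\tilde\Delta_{T,\cR_u}$. The only differences (proving the reductive case first, and using determinant-one of unipotent operators rather than the absence of characters on $S$ and $\cR_u$) are cosmetic reorganizations of the same argument.
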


\begin{proof}
Let $S$ be a maximal semi-simple subgroup of $H$. The absence of nontrivial characters on $S$ and $\cR_u$ implies that
$\tilde \Delta_{S,G} \equiv 1$ and $\tilde \Delta_{\cR_u , G} \equiv 1$.  One can present
each $g \in G$ as $g=str$ where $s \in S, t \in T$, and $r \in \cR_u$.  Hence
$${\rm det} \, {\rm ad}_g=  {\rm det} \, {\rm ad}_s \cdot  {\rm det} \, {\rm ad}_t \cdot  {\rm det} \, {\rm ad}_r   = {\rm det} \, {\rm ad}_t,$$
i.e. $\tilde \Delta_G \equiv 1$ iff $\tilde \Delta_{T, G} \equiv 1$. Note   also that as a vector space the Lie algebra
of $G$ is the direct sum of Lie algebras of $H$ and $\cR_u$. That is, $\tilde \Delta_{T,G} = \tilde \Delta_{T,H} \tilde \Delta_{T,\cR_u}$.
Since $T$ is in the center of $H$ one has $ \Delta_{T, H} \equiv 1$, i.e. $\tilde \Delta_{T,G} = \tilde \Delta_{T,\cR_u}$
which implies the desired conclusion.
\end{proof}

Before formulating the criterion we need one more simple fact.

\begin{lemma}\label{nc.08.06.10}  Let $\rho : P \to X$ be a principal $R$-bundle where $R$ is a reductive group of dimension $m$.
Suppose that $\omega_R$ is an invariant volume form on $R$ and $\alpha$ is a (resp. closed; resp. exact)
$k$-form on $X$ where $0 \leq k \leq n:=\dim X$. Then there exist a  (resp. closed; resp. exact) $R$-invariant  $(m+k)$-form
$\alpha_P$ on $P$ such that for any open subset $U \subset X$, for which $\rho^{-1} (U)$ is naturally isomorphic to $U \times R$,
the restriction of $\alpha_P$ to $\rho^{-1} (U)$ coincides with $\alpha \times \omega_R$.
%In particular, the map $\alpha \to \alpha_P$ yields a homomorphism $\psi_k : H^k(X, \C ) \to H^{n+k} (P, \C)$.
\end{lemma}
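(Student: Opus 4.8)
The plan is to manufacture $\alpha_P$ from two ingredients: the pullback $\rho^{*}\alpha$, which is already a globally defined $R$-invariant $k$-form on $P$, and a ``fibrewise volume form'' extracted from $\omega_R$. First I would look at the vertical bundle $V=\ker d\rho\subset TP$. The right $R$-action trivialises $V$ through its fundamental vector fields, $V\cong P\times\Lie R$, and $r\in R$ acts on the $\Lie R$-factor by $\operatorname{Ad}_{r^{-1}}$; hence $\Lambda^{m}V^{*}\cong P\times\Lambda^{m}(\Lie R)^{*}$. A left-invariant volume form on $R$ is precisely an element $\mu_{e}\in\Lambda^{m}(\Lie R)^{*}$, and because $R$ is reductive, Proposition \ref{nc.08.01.20} (triviality of the sub-modular function) tells us $\mu_{e}$ is $\operatorname{Ad}$-invariant; the constant section it defines is therefore a nowhere-vanishing $R$-invariant section $\mu$ of $\Lambda^{m}V^{*}$ — a fibrewise volume form for $\rho$. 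On a trivialising open $U$, under $\rho^{-1}(U)\cong U\times R$ this $\mu$ is just the restriction of $\pr_R^{*}\omega_R$ to vertical vectors.

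Next I would pick an $m$-form $\tilde\mu$ on $P$ extending $\mu$ (such an extension exists globally because $P$ is affine and a surjection of vector bundles is then surjective on sections), and set $\alpha_P:=\rho^{*}\alpha\wedge\tilde\mu$. Over $U$ one may take $\tilde\mu=\pr_R^{*}\omega_R$, and then $\alpha_P|_{\rho^{-1}(U)}=\pr_U^{*}\alpha\wedge\pr_R^{*}\omega_R=\alpha\times\omega_R$ in the notation of \ref{nc.11.18.30}. For the definition to be consistent I must check that $\rho^{*}\alpha\wedge\tilde\mu$ does not depend on the choice of $\tilde\mu$: two extensions differ by an $m$-form vanishing on $\Lambda^{m}V$, hence lying in the ideal of the exterior algebra generated by $\rho^{*}\Omega^{1}(X)$, so the ambiguity of $\alpha_P$ sits inside $\rho^{*}\Omega^{k+1}(X)\wedge\Omega^{m-1}(P)$. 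When $k=\dim X=n$ this space is zero, so $\alpha_P$ is canonical and automatically equals $\alpha\times\omega_R$ in every trivialisation — exactly the case needed to pass between volume forms on $X$ and on $P$. For $k<n$ one instead fixes one global extension $\tilde\mu$ that is closed, and the coincidence with $\alpha\times\omega_R$ is claimed for trivialisations compatible with that $\tilde\mu$.

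It then remains to verify invariance and the (co)homological refinements. $R$-invariance of $\alpha_P$ is immediate, since $\rho^{*}\alpha$ is basic and $\tilde\mu$ is $R$-invariant. Since $\omega_R$ is a top-degree form on $R$ it is closed, so the local model $\pr_R^{*}\omega_R$ of $\tilde\mu$ is closed, and (choosing $\tilde\mu$ closed globally) one gets
$$d\alpha_P=\rho^{*}(d\alpha)\wedge\tilde\mu=(d\alpha)_P .$$
Hence $\alpha\mapsto\alpha_P$ is linear and commutes with $d$: if $\alpha$ is closed so is $\alpha_P$, and if $\alpha=d\beta$ then $\alpha_P=d(\rho^{*}\beta\wedge\tilde\mu)$ is exact.

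The step I expect to be the real obstacle is the globalisation — producing one $m$-form $\tilde\mu$ on $P$ that is simultaneously an extension of the fibrewise volume form $\mu$, closed, and equal to $\pr_R^{*}\omega_R$ in the relevant trivialisations; equivalently, checking that the local forms $\alpha\times\omega_R$ genuinely patch. For $k=n$ the degree count above dissolves this difficulty, which is why the statement is cleanest (and is what gets used) for volume forms; for smaller $k$ one has to exploit that $R$ is reductive, i.e. that $\omega_R$ is bi-invariant so the transition maps on the $R$-factor are left translations preserving $\pr_R^{*}\omega_R$, rather than merely the bare existence of $\mu$.
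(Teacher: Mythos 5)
Your route is genuinely different from the paper's. The paper never constructs a global $m$-form extending the fibrewise volume form: it simply puts $\omega_i=\alpha\times\omega_R$ on each trivializing chart $\rho^{-1}(U_i)\simeq U_i\times R$, notes that the transition isomorphisms of a principal bundle have the form $(u,r)\mapsto(u,g(u)r)$, i.e. act on the fibre by left translations, and concludes from the invariance of $\omega_R$ that the local forms agree on overlaps and glue to $\alpha_P$; the closed/exact refinements are then read off from this local product description. Your wedge construction $\alpha_P=\rho^*\alpha\wedge\tilde\mu$ is a nice alternative in the top-degree case: the observation that two extensions of $\mu$ differ by an element of the ideal generated by $\rho^*\Omega^1(X)$, so that the ambiguity lies in $\rho^*\Omega^{k+1}(X)\wedge\Omega^{m-1}(P)$ and dies when $k=n$, is correct; it makes $\alpha_P$ canonical, yields the product form in every trivialization, and $R$-invariance then follows from this uniqueness together with the invariance of $\mu$ (you assert that $\tilde\mu$ itself is $R$-invariant, which an arbitrary extension is not, but for $k=n$ you do not need it, since $r^*\tilde\mu$ is again an extension). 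As $k=n$ is the only case the paper actually uses (in the proof of Proposition \ref{nc.08.02.10}), this part of your argument is sound, and your instinct that the statement is cleanest there matches how it is applied.

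For $k<n$, however, your proof has a genuine gap which you half-acknowledge but do not close, and which cannot be closed as stated: you need a single global algebraic $m$-form $\tilde\mu$ extending the fibrewise form which is closed (your identity $d\alpha_P=\rho^{*}(d\alpha)\wedge\tilde\mu$ and the exactness claim both use $d\tilde\mu=0$) and restricts to $\pr_R^*\omega_R$ in trivializations. Such a closed extension need not exist: take $P=SL_2\to SL_2/T$ with $T$ the diagonal torus and $\omega_T=dz/z$; since $H^1(SL_2,\C)=0$, by Grothendieck's theorem a closed algebraic $1$-form on $SL_2$ is $df$ for a regular $f$, and its restriction to a fibre $gT\simeq\C^*$ is the differential of a Laurent polynomial, which has zero residue and so can never equal the invariant form $dz/z$. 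Moreover, you then quietly weaken the conclusion to ``trivialisations compatible with $\tilde\mu$'', which is not what the lemma asserts. A further small point: the appeal to Proposition \ref{nc.08.01.20} for $\operatorname{Ad}$-invariance of $\mu_e$ covers only connected $R$ (compare Example \ref{nc.08.01.30}); what is really being used, here as in the paper, is the invariance hypothesis on $\omega_R$. The paper's gluing argument avoids the global extension problem entirely, which is why its proof is a few lines long.
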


\begin{proof}
Consider an open covering $\{ U_i \}$ of $X$ such that $\rho^{-1} (U_i)$ is naturally isomorphic to $U_i \times R$.
The structure of direct product enables us to consider an $R$-invariant form
$\omega_i =\alpha \times \omega_R$ on every $\rho^{-1} (U_i)$.
The transition isomorphism over $U_i \cap U_j$ is of form $(u, r) \to (u, g(u)r)$ where $u \in U_i \cap U_j$ and $r,g(u) \in R$.
Since $\omega_R$ is an invariant form we see that $\omega_i$ and $\omega_j$ agree on $\rho^{-1} (U_i \cap U_j)$ which
implies the desired conclusion.
\end{proof}

%\begin{notation}\label{nc.08.07.20} We denote such an $\alpha_P$ by $\alpha \tilde \wedge \omega_R$ because of the following reason.
%Form $\alpha$ induces naturally a $k$-form $\tilde \alpha$ on $P$ and there exists an $m$-form
%$\tilde \omega_R$ on $P$ whose restriction to each fiber of $\rho$ coincides with $\omega_R$ ($\tilde \omega_R$ is well-defined
%by the same reason $\alpha_P$ is). Now one can see that
%$\alpha \tilde \wedge \omega_R = \tilde \alpha \wedge \tilde \omega_R$.
%
%Note also that if $\omega_X$ is a volume form on $X$ then $\omega_X \tilde \wedge \omega_R$ is a volume form on $P$.
%%Using the Leray spectral sequence one can show that in the case of a simply connected $X$ the homomorphism $\psi_{n+m-1}$ is injective.
%
%\end{notation}

\begin{proposition}\label{nc.08.02.10} Let $X=G/R$ be a homogeneous space of left cosets
where $G$ is a linear algebraic group and $R$ is a closed reductive subgroup of $G$.
Then $\tilde \Delta_R \equiv \tilde \Delta_G |R$
if and only if there exists  an algebraic
volume form $\omega_X$ on $X$ invariant under the action of $G$
generated by left multiplication. In particular, in the case of connected reductive $G$ and $R$
such a volume form $\omega_X$ always exists (by Proposition \ref{nc.08.01.20}).

\end{proposition}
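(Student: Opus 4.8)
The plan is to identify a $G$-invariant algebraic volume form on $X=G/R$ with a $G$-invariant global section of the canonical line bundle $K_X=\Lambda^n T^*X$, where $n=\dim X$, and to exploit that $K_X$ is a \emph{homogeneous} line bundle, so that the existence of such a section is governed by a single character of the isotropy group $R$.

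First I would recall the standard description of the tangent bundle of a homogeneous space, $TX\cong G\times_R(\ggoth/\mathfrak{r})$, where $R$ acts on $\ggoth/\mathfrak{r}$ via the isotropy representation --- the restriction to $R$ of the adjoint action on $\ggoth$, pushed down to the quotient by the $R$-stable subspace $\mathfrak{r}$. Dualizing and passing to top exterior powers gives $K_X\cong G\times_R\Lambda^n(\ggoth/\mathfrak{r})^*$, i.e. $K_X$ is the homogeneous line bundle associated with the character
$$\chi\colon R\longrightarrow\C^*,\qquad \chi(r)=\bigl({\rm det}\,{\rm ad}_r|_{\ggoth/\mathfrak{r}}\bigr)^{-1}.$$

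Second I would invoke the elementary (and algebraic) correspondence between $G$-invariant sections of a homogeneous bundle $G\times_R V$ and $R$-fixed vectors of $V$: such a section is an $R$-equivariant morphism $G\to V$, and invariance under left translations forces it to be left $G$-invariant, hence a constant vector, which the $R$-equivariance then forces to be $R$-fixed, the converse being immediate. Applied to the one-dimensional space $V=\Lambda^n(\ggoth/\mathfrak{r})^*$, this shows that a nonzero $G$-invariant $n$-form on $X$ exists if and only if $\chi$ is trivial, and that any such form vanishes either identically or nowhere (its zero set is a $G$-invariant closed subset of the homogeneous variety $X$), so in the nontrivial case it is a volume form.

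Finally I would compute $\chi$. Since the subspace $\mathfrak{r}\subset\ggoth$ is $R$-stable under the adjoint action, multiplicativity of determinants along this filtration gives, for every $r\in R$,
$${\rm det}\,{\rm ad}_r|_{\ggoth}=\bigl({\rm det}\,{\rm ad}_r|_{\mathfrak{r}}\bigr)\cdot\bigl({\rm det}\,{\rm ad}_r|_{\ggoth/\mathfrak{r}}\bigr),$$
that is, ${\rm det}\,{\rm ad}_r|_{\ggoth/\mathfrak{r}}=\tilde\Delta_G(r)/\tilde\Delta_R(r)$ in the notation of Definition \ref{nc.08.01.10}. Hence $\chi\equiv 1$ if and only if $\tilde\Delta_R\equiv\tilde\Delta_G|R$, which is exactly the asserted equivalence; and the ``in particular'' clause follows immediately from Proposition \ref{nc.08.01.20}, since for connected reductive $G$ and connected reductive $R$ both sub-modular functions are the trivial character. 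I do not expect a genuine obstacle here: the only points requiring care are the algebraicity of the section/fixed-vector dictionary and the vanishes-everywhere-or-nowhere property, and both are routine. One could instead argue directly with forms --- dividing a left-invariant volume form on $G$ by a left-invariant volume form on $R$ with the help of Lemma \ref{nc.08.06.10} --- but the homogeneous-bundle formulation makes the modular bookkeeping most transparent.
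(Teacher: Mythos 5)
Your proposal is correct, and it takes a genuinely different route from the paper. The paper argues concretely with differential forms: it takes a left-invariant volume form $\omega$ on $G$ and left-invariant vector fields $\nu_1,\dots,\nu_m$ tangent to the fibers of $G\to G/R$, forms the contraction $\iota_{\nu_1}\circ\cdots\circ\iota_{\nu_m}(\omega)$, and studies how right multiplication by $r\in R$ rescales it; the rescaling factor $f_r$ is shown to be constant using the discreteness (mod $\C^*$) of the group of invertible regular functions, and an evaluation at the identity yields $f_r=\tilde\Delta_G(r)/\tilde\Delta_R(r)$, so the form descends to $X$ exactly when $\tilde\Delta_R\equiv\tilde\Delta_G|_R$; the converse direction is obtained separately by reconstructing a form $\omega'$ on $G$ from $\omega_X$ and an invariant $\omega_R$ via Lemma \ref{nc.08.06.10} (this is where reductivity of $R$ enters) and identifying $\omega'$ with $\omega$. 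You instead package everything into the homogeneous-bundle description $K_X\cong G\times_R\Lambda^n(\ggoth/\mathfrak{r})^*$ and the dictionary between $G$-invariant sections and $R$-fixed vectors, so both implications follow at once from the triviality of the single isotropy character, whose value is computed by the same multiplicativity of determinants along the $R$-stable filtration $\mathfrak{r}\subset\ggoth$; the character identity $\det\,{\rm ad}_r|_{\ggoth/\mathfrak{r}}=\tilde\Delta_G(r)/\tilde\Delta_R(r)$ is exactly the evaluation-at-$e$ computation hidden inside the paper's argument. Your version buys uniformity (no separate converse, no unit-discreteness argument, no use of Lemma \ref{nc.08.06.10}, and in fact no use of reductivity of $R$ at all), at the price of invoking the standard algebraic facts that $T(G/R)\cong G\times_R(\ggoth/\mathfrak{r})$ and that the associated bundle and the section/equivariant-map correspondence behave well algebraically; the paper's version stays closer to the explicit forms $\omega$, $\omega_R$, $\omega_X$ that are used elsewhere in the text. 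Both correctly reduce the statement to the sub-modular criterion, and your deduction of the ``in particular'' clause from Proposition \ref{nc.08.01.20} matches the paper's.
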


\begin{proof}
Choose a left-invariant volume
form $\omega$ on $G$ and left-invariant vector fields $\nu_1, \ldots , \nu_m$ on $G$
 generating the Lie algebra of $R$. 
%the coset $eR\simeq R$, where $e$ is the identity of $G$,  so that they
%generate basis of the tangent space at any point of this coset.
%Extend these vector fields to $G$ using left
%multiplication. Since $eR$ is a fiber of the natural projection $p : G \to X$ and left
%multiplication preserves the fibers of $p$ we see that the
%extended fields are tangent to all fibers of $p$.
These fields are tangent to all fibers of the natural projection $p : G \to X$.
Consider the left-invariant form $\omega_X=
\iota_{\nu_1} \circ \ldots \circ \iota_{\nu_m} (\omega)$. By construction it can be viewed
as a non-vanishing form on vectors from the pull-back of the tangent bundle $TX$ to $G$.
To see that it is actually a volume form on $X$ we have to show that it is invariant under multiplication by
any  element $r \in R$.  Such multiplication generates an automorphism of
$TG$ that sends vectors tangent (and, therefore,
transversal) to fibers of $p$ to similar
vectors. Hence it transforms $\omega_X$ into $f_r\omega_X$ where $f_r$ is an invertible function on $G$.
Since modulo $\C^*$ the group of invertible functions is a discrete set (more precisely, it is $H^1(G, \Z )$; e.g., see \cite{Fu})
and $f_e\equiv 1$ (where $e$ is the identity of $G$)
we see that $f_r$ is a nonzero constant for every $r$. Hence the map $r\to f_r$ yields  a homomorphism from $R$ into $\C^*$.
Since $\omega$ is left-invariant we see that ${r^{-1}} \circ \omega_X \circ r =f_r \omega_X$ (where $r^{-1} \circ \omega_X \circ r$ is the image of $\omega_X$
under conjugation by $r$).
Consider the last equality at $e \in G$ treating $\omega_X$ as the result of the evaluation of $\omega$ at the wedge
product $\mu=\nu_1 \wedge \ldots \wedge \nu_m$.  Note that conjugation by $r$ transforms $\omega$ at $e$ into $\tilde \Delta_G (r)\omega$ while
transforming $\mu$ into $(\tilde \Delta_R (r))^{-1}\mu$ (because $\mu$ is dual to $\omega_R$).  Hence $f_r (e) =\tilde \Delta_G (r)/\Delta_R (r)$.
Since $f_r(g)$ is independent from $g\in G$ we see that $\omega_X$ is invariant under right multiplication by elements of $R$ iff
$\tilde \Delta_R \equiv \tilde \Delta_G |_R$.

The other direction: by Lemma \ref{nc.08.06.10} in the presence of an algebraic volume form $\omega_X$ on $X$
consider  the volume form $\omega'$ on $G$ such that for every open $U \subset X$
with $p^{-1} (U)\simeq U \times R$ the restriction of $\omega'$ to $p^{-1}(U)$ coincides with $\omega_X \times \omega_R$.
Note that up to a constant factor $\iota_{\nu_1} \circ \ldots \circ \iota_{\nu_m} (\omega')$ coincides with $\omega_X$ since
$\iota_{\nu_1} \circ \ldots \circ \iota_{\nu_m} (\omega_R)$ is constant and
$\iota_{\nu_1} \circ \ldots \circ \iota_{\nu_m} (\omega_X \times \omega_R) =(\iota_{\nu_1} \circ \ldots \circ \iota_{\nu_m} (\omega_R)) \omega_X$.

Thus  we can suppose that
$\omega_X= \iota_{\nu_1} \circ \ldots \circ \iota_{\nu_m} (\omega')$. Note also that  $\omega' $
is left-invariant provided $\omega_X$ is left-invariant, i.e. $\omega'=\omega$. That is, the relation between the left-invariant form $\omega$ on $G$
and $\omega_X$ is the same as in the first part of the proof which yields the desired conclusion.

\end{proof}

\begin{example}\label{nc.08.01.30} Unlike in Proposition \ref{nc.08.01.20}  for a non-connected reductive group
the sub-modular function may differ from a trivial character.
Consider for instance a nontrivial extension $N$ of $\C^*$ by $\Z_2$, i.e. for
every $z \in \C^*$ and $g \in N \setminus \C^*$ we have $gzg^{-1} =z^{-1}$.
Then the value $\tilde \Delta_N$ on one connected component of $N$ is $-1$.
In particular, if one treats $N$ as the normalizer of a maximal torus in $SL_2$ then
$SL_2/N$ does not possess a left invariant volume form  by Proposition \ref{nc.08.02.10}. Furthermore,
$SL_2/N$ does not admit nonconstant invertible regular functions and therefore any two algebraic volume forms
must be proportional which implies the absence of algebraic volume forms of $SL_2/N$.
\end{example}

It is worth mentioning the following cohomological interpretation of volume forms.

\begin{proposition}\label{nc.08.08.10}
Let $X$ be a homogeneous space of a connected reductive group $G$
and $\omega$ be an $G$-invariant volume form on $X$. Suppose that $\dim X =n$ and $H^n(X) \ne 0$ (i.e. $H^n(X, \C ) =\C$).
Then De Rham homomorphism sends $\omega$ to a generator of $H^n (X, \C)$.
\end{proposition}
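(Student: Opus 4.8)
The plan is to test the (automatically closed, being of top degree) form $\omega$ against the fundamental class of the compact real form of $X$. By Theorem~\ref{nc.07.14.40} we have $X\simeq K^{\C}/L^{\C}\cong K\times_{L}\sqrt{-1}\mgoth$, a vector bundle over $K/L$ whose zero section is $K/L$; as in the proof of Corollary~\ref{nc.07.14.50} the inclusion $i\colon K/L\hookrightarrow X$ is therefore a homotopy equivalence, so $i^{*}\colon H^{n}(X,\C)\xrightarrow{\ \sim\ }H^{n}(K/L,\C)$, and the De Rham class of $\omega$ generates $H^{n}(X,\C)$ iff the De Rham class of $i^{*}\omega$ generates $H^{n}(K/L,\C)$. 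Since $G$ is connected, $X$, $K$ and $K/L$ are connected; and $H^{n}(X,\C)\neq 0$ means $H^{n}(K/L,\C)\neq 0$, so the closed $n$-manifold $K/L$ is orientable. Fixing an orientation, $\int_{K/L}\colon H^{n}_{\mathrm{dR}}(K/L;\C)\to\C$ is an isomorphism, so it remains to prove $\int_{K/L} i^{*}\omega\neq 0$.

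First I would show that $i^{*}\omega$ is a nowhere vanishing $n$-form on $K/L$. It is $K$-invariant, because $\omega$ is $G$-invariant and $i$ is $K$-equivariant; by homogeneity of $K/L$ it thus suffices to check $(i^{*}\omega)(x_{0})\neq 0$ at the base point $x_{0}=eL$. Here Theorem~\ref{nc.07.14.40} identifies $T_{x_{0}}(K/L)\cong\mgoth$ as a real form of the complex vector space $T_{x_{0}}X\cong\mgoth^{\C}=\mgoth\oplus\sqrt{-1}\mgoth$, i.e. an $\R$-basis of $T_{x_{0}}(K/L)$ is simultaneously a $\C$-basis of $T_{x_{0}}X$. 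Since $\omega(x_{0})$ is a nonzero element of the one-dimensional complex space $\Lambda^{n}_{\C}(T_{x_{0}}X)^{*}$, it sends that basis to a nonzero scalar, so its restriction $(i^{*}\omega)(x_{0})$ to $T_{x_{0}}(K/L)$ is nonzero.

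It then remains to see that a nowhere vanishing $K$-invariant (complex-valued) $n$-form on $K/L$ has nonzero integral. The space of $K$-invariant real $n$-forms on $K/L$ is isomorphic to $\bigl(\Lambda^{n}T_{x_{0}}^{*}(K/L)\bigr)^{L}$; every element of $L\subset K$ acts on $K/L$ preserving the global orientation, hence acts on the one-dimensional space $\Lambda^{n}T_{x_{0}}^{*}(K/L)$ through $\R_{>0}$, hence trivially since $L$ is compact. So this space is one-dimensional, spanned by an invariant volume form $\nu$ with $\int_{K/L}\nu\neq 0$. Writing $i^{*}\omega=\alpha+\sqrt{-1}\,\beta$ with $\alpha,\beta$ real $K$-invariant $n$-forms, we get $\alpha=a\nu$ and $\beta=b\nu$ with $a,b\in\R$, not both zero since $i^{*}\omega$ is nowhere zero; hence $\int_{K/L}i^{*}\omega=(a+\sqrt{-1}\,b)\int_{K/L}\nu\neq 0$. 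Therefore the De Rham class of $\omega$ is nonzero, and as $H^{n}(X,\C)\cong\C$ it is a generator.

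The one genuinely delicate step is the pointwise nondegeneracy in the second paragraph: recognizing, via the Mostow decomposition of Theorem~\ref{nc.07.14.40}, that the tangent space of the compact cycle $K/L$ is a real form of the holomorphic tangent space of $X$, which is exactly what makes the restriction of the holomorphic volume form nonzero there. Everything else — the homotopy equivalence $K/L\simeq X$, orientability from $H^n(X)\neq0$, and uniqueness up to scale of the $K$-invariant density — is standard.
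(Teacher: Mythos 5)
Your proof is correct, but it takes a genuinely different route from the paper's. The paper argues entirely inside the (algebraic) de Rham picture: a generator of $H^n(X,\C)$ is represented by a closed algebraic form $f\omega$ (possible because $\omega$ trivializes the canonical bundle and $X$ is affine, so Grothendieck's theorem applies); averaging $f$ over the maximal compact $K$ does not change the class since $K$ is connected, and the averaged coefficient is a $K$-invariant holomorphic function on the $G$-homogeneous space $X$, hence a nonzero constant $c$, so the generator is $c[\omega]$. You instead pair $[\omega]$ with the compact cycle $K/L$ coming from the Mostow decomposition (Theorem \ref{nc.07.14.40}, Corollary \ref{nc.07.14.50}): the key point, which you identify correctly, is that $K/L$ sits in $X$ as a maximal totally real submanifold (its tangent space is really $\lgoth/\Lie (L)$, not $\mgoth$ itself, but these are $L$-equivariantly isomorphic, so this is harmless), whence $i^{*}\omega$ is a nowhere vanishing $K$-invariant form with nonzero integral. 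Your route buys an explicit topological witness --- $\omega$ integrates nontrivially over the compact real form --- and avoids representing the class by an algebraic form via Grothendieck; the price is reliance on the Mostow decomposition and, implicitly, on reductivity of the isotropy group (equivalently affineness of $X$ plus Matsushima), which is covered by the paper's standing convention in Notation \ref{nc.07.13.10} though not by the literal wording of the proposition. Both proofs pivot on the same maximal compact $K$; yours replaces the analytic step ``$K$-invariant holomorphic functions on $X$ are constant'' by the geometric step ``$K/L$ is totally real of top dimension.''
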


\begin{proof} Suppose that under De Rham isomorphism a closed form $f \omega$ corresponds to a generator  $\alpha$ of $H^n(X, \Z)$ and
that $K=G_\R$ is a maximal compact subgroup of $G$ (i.e.
$G$ is the complexification $K_\C$ of $K$). Since the $K$-action on $X$ induces the identical automorphism
of $H^n (G, \Z )$ we see that $(f \circ k) \omega$ corresponds again to $\alpha$ where $f\circ k$ is the image of $f$ under the action
of $k \in K$. Hence $g \omega$ corresponds to $\alpha$ where $g = \int_K (f \circ k) \mu$ and $\mu$ is the invariant Haar measure on $K$.
Since $g$ is $K$-invariant it must be a constant function on $X$ which implies the desired conclusion.

\end{proof}

\begin{corollary}\label{nc.11.30.10}
Let an
affine algebraic manifold $X$ without nonconstant regular invertible functions possess an algebraic volume form $\omega$
such that under De Rham homomorphism $\omega$ corresponds to the zero element of $H^n(X, \C ) =\C$.
Then  $X$ cannot be a homogeneous space of any   reductive group.

\end{corollary}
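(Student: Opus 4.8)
The plan is to assume, for contradiction, that $X$ is a homogeneous space of a nontrivial reductive group and to violate the hypothesis that the De Rham class of $\omega$ vanishes in $H^n(X,\C)=\C$, by exhibiting an algebraic volume form on $X$ (or on a suitable finite cover) whose class is a generator of top cohomology and then using that any two algebraic volume forms on $X$ are proportional. First I would record two reductions. Since a disjoint union of two or more nonempty affine varieties carries nonconstant invertible regular functions, $X$ is connected; hence if a reductive group $G$ acts transitively on $X$ so does its (reductive) identity component $G^0$, and $G^0$ is nontrivial, for otherwise $G$ is finite, $X$ is a point, $n=0$, and $\omega$ represents a nonzero element of $H^0(X,\C)$. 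So I may assume $G$ connected and write $X=G/R$; as $X$ is affine, the Matsushima theorem \cite{Ma} shows that $R$ is reductive.

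The core of the argument is the existence of a $G$-invariant volume form after passing to a finite cover, combined with Proposition \ref{nc.08.08.10}. Let $R^0$ be the identity component of $R$ (again reductive), put $Y:=G/R^0$, and let $\pi\colon Y\to X$ be the associated finite \'etale Galois covering with group $\Delta:=R/R^0$ (so $Y$ is affine as well). Since $R^0$ is connected reductive, Propositions \ref{nc.08.01.20} and \ref{nc.08.02.10} furnish a $G$-invariant algebraic volume form $\omega_0$ on $Y$. As $\pi_*\pi^*$ is multiplication by $|\Delta|$ on $H^*(X,\C)$, the map $\pi^*\colon H^n(X,\C)\to H^n(Y,\C)$ is injective, so $H^n(Y,\C)\neq 0$; hence Proposition \ref{nc.08.08.10} applies to $Y$ and the class of $\omega_0$ generates $H^n(Y,\C)$, in particular it is nonzero. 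Now $\pi^*\omega$ is again an algebraic volume form on $Y$, so $\pi^*\omega=h\,\omega_0$ for a unique $h\in\cO^*(Y)$; the plan is to show that $h$ is a nonzero constant, which gives $[\pi^*\omega]=h[\omega_0]\neq 0$ while simultaneously $[\pi^*\omega]=\pi^*[\omega]=0$ --- the required contradiction.

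To see that $h$ is constant: each $\delta\in\Delta$ acts on $Y$ by right translation and therefore commutes with the left $G$-action, so $\delta^*\omega_0$ is once more a $G$-invariant volume form and $\delta^*\omega_0/\omega_0$ is a $G$-invariant regular function on the homogeneous space $Y$, i.e. a constant $\psi(\delta)\in\C^*$; the resulting $\psi\colon\Delta\to\C^*$ is a character. Since $\pi^*\omega$ is $\Delta$-invariant, one obtains $\delta^*h=\psi(\delta)^{-1}h$ for all $\delta$, whence the norm $\prod_{\delta\in\Delta}\delta^*h=\big(\prod_{\delta\in\Delta}\psi(\delta)\big)^{-1}h^{|\Delta|}$ is a $\Delta$-invariant unit on $Y$ and so lies in $\cO^*(Y)^\Delta=\cO^*(X)=\C^*$ by the hypothesis on $X$. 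Therefore $h^{|\Delta|}\in\C^*$, and since $Y$ is connected $h$ itself is a nonzero constant.

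I expect the only genuinely delicate point to be the possible disconnectedness of the stabilizer $R$: in that case $X=G/R$ itself may fail to carry a $G$-invariant volume form (compare Example \ref{nc.08.01.30}), which is precisely why one passes to the cover $Y=G/R^0$. The price is that $\pi^*\omega$ and $\omega_0$ can differ by the twisting factor $h$, and showing that $h$ is constant is where the assumption that $X$ has no nonconstant invertible functions is used a second, less evident, time. If one is content to treat only the case of a connected stabilizer, the whole argument collapses to the single line ``$\omega=c\,\omega_X$ for the invariant form $\omega_X$ of Proposition \ref{nc.08.02.10}, and $[\omega_X]\neq 0$ by Proposition \ref{nc.08.08.10}''.
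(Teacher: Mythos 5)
Your proof is correct, and its core is exactly the deduction the paper intends but does not write out: produce a $G$-invariant volume form via Propositions \ref{nc.08.01.20} and \ref{nc.08.02.10}, use the absence of nonconstant units to compare it with $\omega$ up to a constant, and invoke Proposition \ref{nc.08.08.10} to contradict $[\omega]=0$. Where you genuinely go beyond the paper's implicit one-line argument is in the treatment of a disconnected isotropy group $R$: the paper's Example \ref{nc.08.01.30} shows that $G/R$ may then carry no $G$-invariant form at all, so the naive comparison ``$\omega=c\,\omega_X$'' is not available, and the corollary as stated silently skips this case. Your device --- passing to the finite Galois cover $Y=G/R^0$, where Proposition \ref{nc.08.02.10} does apply, using injectivity of $\pi^*$ on $H^n$ via the transfer, and then showing the unit $h=\pi^*\omega/\omega_0$ is constant by the norm $\prod_{\delta\in\Delta}\delta^*h$ descending to an invertible function on $X$ --- is sound (the character $\psi$ and the identity $\delta^*h=\psi(\delta)^{-1}h$ check out, and connectedness of $Y=G/R^0$ gives constancy from $h^{|\Delta|}\in\C^*$). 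An alternative, cover-free route would note that $g^*\omega=c(g)\omega$ for an algebraic character $c$ of $G$ and that, by Rosenlicht's description of units on $G$, a nontrivial such $c$ would force a nonconstant invertible function on $X$; your covering argument buys the same conclusion using only tools already in the paper's appendix, at the modest cost of the transfer argument in cohomology.
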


\begin{remark}\label{nc.11.2.10}  It was shown in \cite{DuFi} that any variety $X_{m,1}= \{ x^mv-yu=1 \} \subset \C^4_{x,y,u,v}$
with $m\geq 2$ is diffeomorphic (as a real manifold) but not isomorphic to $X_{1,1} \simeq SL_2$ \footnote{Such varieties $X_{m,1}$
with $m \geq 2$ are examples of ``complex exotic affine 3-spheres" since $SL_2$ is isomorphic to $\{ x^2 +y^2 +u^2 +v^2 =1 \} \subset \C^4$.}
because the unique (up to a constant factor) volume form $\omega_m =x^{-m} {\rm d} x \wedge {\rm d} y \wedge {\rm d} u $ on $X_{m,1}$ is exact
($\omega_m = {\rm d} \tau$ where $\tau =\frac {{\rm d} y \wedge{\rm d} u}{(1-m)x^{m-1}}$) .
Corollary \ref{nc.11.30.10} enables us to tell now more:
$X_{m,1}$ is not  isomorphic to a homogeneous space of a reductive group.

\end{remark}

\providecommand{\bysame}{\leavevmode\hboxto3em{\hrulefill}\thinspace}

\end{document}